\newtheorem{theorem}{Theorem}[section]
\newtheorem{claim}[theorem]{Claim}
\newtheorem{conjecture}[theorem]{Conjecture}
\newtheorem{lemma}[theorem]{Lemma}
\newtheorem{proposition}[theorem]{Proposition}
\theoremstyle{definition}
\newtheorem{definition}[theorem]{Definition}
\newtheorem{example}[theorem]{Example}
\newtheorem{remark}[theorem]{Remark}
\newtheorem{question}[theorem]{Question}
\numberwithin{equation}{subsection}
\DeclareMathAlphabet{\mathpgoth}{OT1}{pgoth}{m}{n}
\DeclareMathAlphabet{\mathpzc}{OT1}{pzc}{m}{it}
\newcommand{\C}{\mathbb{C}}
\newcommand{\R}{\mathbb{R}}
\newcommand{\Z}{\mathbb{Z}}
\newcommand{\N}{\mathbb{N}}
\newcommand{\be}{\begin{enumerate}}
\newcommand{\ee}{\end{enumerate}}
\newcommand{\op}{\operatorname}
\newcommand{\arr}{\overrightarrow}
\newcommand{\cbu}{\color{black}}
\newcommand{\cb}{\color{black}}
\DeclareMathOperator{\ind}{Ind}
\DeclareMathOperator{\crit}{Crit}
\newcommand{\hess}[1]{\text{Hess}_{#1}}
\newcommand{\morsedbar}{\mathcal L}
\newcommand{\iT}[1]{#1}
\newcommand{\obstructionSectionS}{\mathfrak{s}}
\newcommand{\obstructionBundle}{\mathcal O}
\newcommand{\chartData}[1]{\mathcal{C}_{\iT{I}}}
\DeclareMathOperator{\stab}{stab}
\newbox\qbox
\def\usecolor#1{\csname\string\color@#1\endcsname\space}
\newcommand\bordercolor[1]{\colsplit{1}{#1}}
\newcommand\fillcolor[1]{\colsplit{0}{#1}}
\newcommand\outline[1]{\leavevmode%
  \def\maltext{\mydelim #1\mydelim}%
  \setbox\qbox=\hbox{\maltext}%
  \boxgs{Q q 2 Tr \bbthickness\space w \fillcol\space \bordercol\space}{}%
  \copy\qbox%
}
\newcommand\mathbh[1]{\def\mydelim{$}\outline{#1}}
\newcommand\colsplit[2]{\colorlet{tmpcolor}{#2}\edef\tmp{\usecolor{tmpcolor}}%
  \def\tmpB{}\expandafter\colsplithelp\tmp\relax%
  \ifnum0=#1\relax\edef\fillcol{\tmpB}\else\edef\bordercol{\tmpC}\fi}
\def\colsplithelp#1#2 #3\relax{%
  \edef\tmpB{\tmpB#1#2 }%
  \ifnum `#1>`9\relax\def\tmpC{#3}\else\colsplithelp#3\relax\fi
}
\newcommand\bbthickness{.5}
\newcommand{\cP}[1]{\mathsf{#1}} 
\newcommand{\A}{\mathscr A}
\newcommand{\D}{\mathscr D}
\newcommand{\p}{\partial}
\newcommand{\fatD}{\mathbh{\partial}}
\newcommand{\M}[3]{\mathcal{M}(#1;\cP{#2},\cP{#3})}
\newcommand{\m}[2]{\mathcal{M}(\cP{#1},\cP{#2})}
\newcommand{\lm}{\mathcal{M}}
\newcommand{\orientation}[1]{\boldsymbol{o}({#1})}
\newcommand{\vt}{t}
\newcommand{\labitem}[2]{%
\def\@itemlabel{#1}
\item
\def\@currentlabel{#1}\label{#2}}
\begin{document}

\title[Equivariant Morse Homology via Broken Trajectories]
{Equivariant Morse Homology for Reflection Actions via Broken Trajectories}

\author{Erkao Bao}
\address{School of Mathematics, University of Minnesota, Minneapolis, MN 55455}
\email{bao@umn.edu} \urladdr{https://erkaobao.github.io/math/}

\author{Tyler Lawson}
\address{School of Mathematics, University of Minnesota, Minneapolis, MN 55455}
\email{tlawson@math.umn.edu} 
\urladdr{https://www-users.cse.umn.edu/~tlawson/}

\author{Lina Liu}
\address{School of Mathematics, University of Minnesota, Minneapolis, MN 55455}
\email{liu02226@umn.edu}
\urladdr{https://sites.google.com/view/linaliu/}

\keywords{Morse homology, group action, Morse-Smale, clean intersection, reflection action, obstruction bundle gluing}

\thanks{Erkao Bao is supported by NSF Grants DMS-2404529.}
\thanks{Tyler Lawson is supported by NSF Grants DMS-2208062.}
\thanks{Lina Liu is supported by NSF GRFP Grants 2237827.}

\begin{abstract}
    We consider a finite group $G$ acting on a manifold $M$. 
    {\cbu According to \cite{wasserman1969equivariant, bao2024morse}, a generic equivariant  function on $M$ is Morse.
    For any equivariant Morse function,} there does not always exist an equivariant metric $g$ on $M$ such that the pair $(f,g)$ is Morse-Smale. Here, the pair $(f,g)$ is called Morse-Smale if the descending and ascending manifolds intersect transversely. The best possible metrics $g$ are those that make the pair $(f,g)$ \emph{stably Morse-Smale}. 
    
    A diffeomorphism $\phi: M \to M$ is a \emph{reflection} if $\phi^2 = \op{id}$ and the fixed point set of $\phi$ forms a codimension-one submanifold (with $M \setminus M^{\op{fix}}$ not necessarily disconnected).
    In this note, we focus on the special case where the group $G = \{\op{id}, \phi\}$. We show that the condition of being \emph{stably Morse-Smale} is generic for metrics $g$. Given a stably Morse-Smale pair, we introduce a canonical equivariant Thom-Smale-Witten complex by counting certain broken trajectories.
    
    This has applications to the case when we have a manifold with boundary and when the Morse function has critical points on the boundary. 
    We provide an alternative definition of the Thom-Smale-Witten complexes, which are quasi-isomorphic to those defined by \cite{kronheimer2007Monopoles}. 
    
    We also explore the case when $G$ is generated by multiple reflections. As an example, we compute the Thom-Smale-Witten complex of an upright higher-genus surface by counting broken trajectories.
\end{abstract}

\maketitle

\setcounter{tocdepth}{2}


\section{Introduction and main results}

Let $M$ be a closed manifold with a finite group $G$ acting on it via diffeomorphisms. Consider an equivariant Morse function $f$, which exists generically by \cite{wasserman1969equivariant,bao2024morse}. We begin by reviewing the equivariant Morse homologies defined in \cite{bao2024morse}.

For any critical point $\cP{p} \in \crit(f)$, let $H = \stab(\cP{p})$ be the stabilizer subgroup of $\cP{p}$. 
Let $(T_\cP{p} M)^H$ be the $H$-equivariant subspace of $T_\cP{p} M$.
We define $T'_\cP{p} M$ to be the kernel of the averaging map $T_\cP{p} M \to (T_\cP{p} M)^H$, $$v \mapsto \frac{1}{|H|}\sum_{\sigma \in H} d \sigma \cdot v.$$
Then we have the decomposition $T_\cP{p} M = (T_\cP{p} M)^H \oplus T'_\cP{p} M$.
\begin{definition}[Stable critical point]
    A critical point $\cP{p} \in \crit(f)$ is called \emph{stable} if the Hessian $\hess{f}(\cP{p})$ is positive definite on $T'_\cP{p} M$.
\end{definition}

\begin{definition}[Stable Morse function]
    A Morse function $f$ is called \emph{stable} if all its critical points are stable.
\end{definition}

As shown in \cbu Theorem 1.7 of \cb \cite{bao2024morse}, any equivariant Morse function $f$ can be perturbed near its unstable critical points to obtain a stable Morse function $f^\epsilon$ via a $C^0$-small perturbation. Let $g$ denote an equivariant metric on $M$.

\begin{definition}[Morse-Smale]
A pair $(f,g)$ is Morse-Smale if the descending manifold $\mathscr D_{\cP{p}}$ intersects the ascending manifold $\mathscr A_\cP{q}$ transversely in $M$ for all critical points $\cP{p}, \cP{q} \in \crit(f)$.   
\end{definition}

\begin{theorem}[\cbu Theorem 1.8 \cb \cite{bao2024morse}]
    For a stable equivariant Morse function $f$, a generic equivariant metric $g$ makes the pair $(f,g)$ Morse-Smale.
\end{theorem}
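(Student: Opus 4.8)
The plan is to run the standard Sard--Smale argument for Morse--Smale transversality, but carried out \emph{equivariantly}, exploiting the hypothesis that $f$ is \emph{stable}. Fix critical points $\cP{p},\cP{q}\in\crit(f)$ with stabilizers $H_p=\stab(\cP{p})$, $H_q=\stab(\cP{q})$. Because $f$ is stable, the negative eigenspace of $\hess{f}(\cP{p})$ lies in $(T_\cP{p}M)^{H_p}$, so the descending manifold $\mathscr D_\cP{p}$ is tangent at $\cP{p}$ to a subspace of the fixed locus $M^{H_p}$; since $\mathscr D_\cP{p}$ is the image of the (equivariant) negative gradient flow of an $H_p$-fixed initial disc, one shows $\mathscr D_\cP{p}\subseteq M^{H_p}$, and similarly $\mathscr A_\cP{q}\subseteq M^{H_q}$. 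Hence any gradient trajectory from $\cP{p}$ to $\cP{q}$ lies in the fixed submanifold $N:=M^{H_p}\cap M^{H_q}$ (a closed submanifold on which the whole argument will take place), and on $N$ the function $f|_N$ is again Morse with $\mathscr D_\cP{p},\mathscr A_\cP{q}$ its descending/ascending manifolds. This reduction is the key point that makes equivariance harmless: away from the fixed loci there is nothing to check.

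Next I would set up the universal moduli space on $N$. Let $\mathcal G$ be the (Banach, or Fr\'echet via the usual $C^\varepsilon$ trick) space of equivariant metrics $g$ on $M$; restriction gives metrics $g|_N$ on $N$, and — this needs a small lemma — the restriction map $\mathcal G\to\{\text{metrics on }N\}$ has image containing all metrics on $N$ up to the perturbations we need, since any metric on $N$ extends $H$-invariantly to a neighborhood and then globally. Form the parametrized space
\[
  \widetilde{\mathcal M}(\cP{p},\cP{q}) \;=\; \set{(\gamma,g)}{g\in\mathcal G,\ \gamma \text{ a }-\nabla_g f\text{-trajectory in }N\text{ from }\cP{p}\text{ to }\cP{q}}.
\]
The standard computation — linearizing the flow equation and using that the unstable manifold of $\cP{p}$ can be moved freely by varying $g$ near an interior point of $\gamma$ (where $\gamma$ is an embedded arc in $N$ avoiding all other critical points) — shows $\widetilde{\mathcal M}(\cP{p},\cP{q})$ is a Banach manifold. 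The Sard--Smale theorem applied to the projection $\widetilde{\mathcal M}(\cP{p},\cP{q})\to\mathcal G$ yields a residual set of $g$ for which $\mathscr D_\cP{p}\pitchfork\mathscr A_\cP{q}$ inside $N$; intersecting over the finitely many pairs $(\cP{p},\cP{q})$ gives a residual set of equivariant metrics making $(f,g)$ Morse--Smale. (Transversality inside $N$ is equivalent to transversality in $M$ once one checks $T M|_N = TN \oplus \nu$ splits the relevant tangent spaces, which follows from stability of $\cP{p}$ and $\cP{q}$.)

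The main obstacle — and the only place equivariance bites — is the surjectivity of the perturbation at an interior point of a trajectory $\gamma$ \emph{while staying within $G$-invariant metrics}. Concretely, to kill the cokernel of the linearized operator one wants to prescribe an arbitrary variation of $\mathscr D_\cP{p}$ near a point $x_0\in\gamma$, realized by a compactly supported variation of the metric; one must be able to choose this variation $G$-equivariantly. This is fine precisely because $x_0\in N\subseteq M^{H_p}$ and the $G$-orbit of $x_0$ consists of points with the same stabilizer behavior, so one spreads a local perturbation over the orbit of $x_0$ by averaging; the only subtlety is when the orbit $G\cdot x_0$ meets $\gamma$ or the other trajectory in more than one point, which one avoids by choosing $x_0$ generically on $\gamma$ (the trajectory being one-dimensional, the bad set of parameters is at most zero-dimensional) — exactly as in the nonequivariant proof one avoids self-intersections of $\gamma$. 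With that local-to-global averaging in hand, the Fredholm/Sard--Smale machinery goes through verbatim, and passing from the Banach setup to genuine smooth equivariant metrics is the usual Taubes $C^\varepsilon$ argument.
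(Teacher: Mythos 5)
There is a genuine gap at the very start, in the ``similarly'' clause. Stability of $\cP{q}$ is the statement that the \emph{negative} eigenspace of $\hess{f}(\cP{q})$ lies in $(T_{\cP{q}}M)^{H_q}$; this gives $\mathscr D_{\cP{q}}\subset M^{H_q}$, not $\mathscr A_{\cP{q}}\subset M^{H_q}$. In fact, the opposite is true: stability forces the \emph{positive} eigenspace to contain a complement of $(T_{\cP{q}}M)^{H_q}$, so $\mathscr A_{\cP{q}}$ is \emph{transverse} to $M^{H_q}$ at $\cP{q}$, not contained in it. (A saddle $f(x,y)=-x^2+y^2$ on $\R^2$ with $\Z_2$ acting by $y\mapsto -y$ is a counterexample: $\cP{q}=0$ is stable, $M^G=\{y=0\}$, but $\mathscr A_{\cP{q}}=\{x=0\}\not\subset M^G$.) The inclusion you want would follow from $\cP{q}$ being \emph{unstable} (i.e.\ stable for $-f$), not stable, so the argument is not symmetric under $f\leftrightarrow -f$ the way the word ``similarly'' suggests.

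As a result the reduction to $N:=M^{H_p}\cap M^{H_q}$ collapses. A trajectory $\gamma$ from $\cP{p}$ to $\cP{q}$ does lie in $\mathscr D_{\cP{p}}\subset M^{H_p}$, and one gets $H_p\subset H_q$, but the stabilizer $\stab(\gamma)$ can be any subgroup $H'$ with $H_p\subset H'\subset H_q$, and it \emph{varies from trajectory to trajectory} between the same pair of critical points. In particular $\gamma$ need not lie in $N=M^{H_q}$, and $f|_N$ does not have $\mathscr D_{\cP{p}}$ as its descending manifold since $\mathscr D_{\cP{p}}\not\subset M^{H_q}$ in general. The slogan ``away from the fixed loci there is nothing to check'' is also not right: the real content (cf.\ the paper's Lemma~\ref{lemma: boundary implies interior}, and the proof of the reflection case Theorem~\ref{thm: stably metric}) is that one must achieve transversality of $\mathscr D_{\cP{p}}\cap M^{H'}$ and $\mathscr A_{\cP{q}}\cap M^{H'}$ inside \emph{each} stratum $M^{H'}$ by a perturbation supported there, and then use stability of $\cP{q}$ to promote transversality in the stratum to transversality in $M$. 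This stratum-by-stratum scheme is what the Sard--Smale argument has to be threaded through, and it is exactly the step your proposal skips.
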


\cbu 
The equivariant Morse homology is defined through the equivariant Morse chain complex $C(f,g)$ commonly known as the equivariant Thom-Smale-Witten complex. 
Let $H \subset G$ be an arbitrary subgroup, then $M^H \subset M$ is a smooth submanifold. 
By restricting the Morse-Smale pair $(f,g)$ to $M^H$, we get a Morse-Smale pair $(f|_{M^H}, g|_{M^H})$, using which we can define a chain complex $C(f|_{M^H}, g|_{M^H})$ for $M^H$. A benefit of $f$ being stable is that $C(f|_{M^H}, g|_{M^H})$ is a subcomplex of $C(f,g)$.

On the other hand, if we start with an equivariant Morse function $f$ that is not stable,
and if we perturb $f$ to $f^\epsilon$, it is challenging to compute the chain complex $C(f^\epsilon,g)$. \cb  However, when $f^\epsilon$ is sufficiently close to $f$, smooth trajectories for $f^\epsilon$ can be obtained by gluing certain broken trajectories for $f$.

In this paper, we define an equivariant Thom-Smale-Witten complex for the pair $(f,g)$ by counting certain broken trajectories. To count these broken trajectories, we work in the case when the moduli spaces are not too bad. Note that given an unstable equivariant Morse function $f$, it is not always possible to find a metric $g$ such that the pair $(f,g)$ is Morse-Smale. For this reason, we introduce a weaker condition, \emph{stably Morse-Smale} (see Definition~\ref{def: stably morse smale}), which roughly means that ascending and descending manifolds intersect cleanly.

We pose the following question:
\begin{question}
Given an equivariant Morse function $f$, is the condition that $(f,g)$ is stably Morse-Smale a generic condition on $g$?
\end{question}

We give an affirmative answer (see Theorem~\ref{thm: stably metric}) to this question when $G$ is the group generated by a reflection $\phi$. We simply write $G = \langle \phi \rangle$.

\begin{definition}[Reflection]
    A diffeomorphism $\phi: M \to M$ is a reflection, if $\phi^2 = \op{id}$ and the fixed point set $\{x \in M ~|~ \phi(x) = x\}$ is a codimension-one submanifold of $M$. 
\end{definition}

When $G = \langle \phi \rangle$ and the pair $(f,g)$ is stably Morse-Smale, we define an equivariant Thom-Smale-Witten complex (see Theorem~\ref{thm: bold d square is zero}) by counting certain broken trajectories, and we use the obstruction bundle gluing developed in \cite{hutchings2009gluing} to show differential squared is zero. The resulting chain complex is canonical. Note that the approach developed in \cite{bao2024morse} works for all finite group actions, but this construction first stabilizes the Morse function $f$ to $f^\epsilon$, and the chain complex for the pair $(f^\epsilon, g)$ depends on the choices of stabilization. Note that this stabilization is only canonical for reflection actions.

The same construction applies in the case where $M$ is a compact manifold with boundary, since we can double $M$ and get a manifold with a reflection. This gives an alternative definition to Kronheimer and Mrowka's definition of Morse homology for manifolds with boundaries in Section~2 of \cite{kronheimer2007Monopoles}.

We also explore the case when $G$ is generated by multiple reflections, and compute an equivariant Thom-Smale-Witten complex of an upright higher-genus Riemann surface, where many gradient trajectories are not transversely cut out. We want to emphasize that the advantage of our definition is that we do not perturb the moduli spaces, and hence it is computable.

The paper is organized as follows:
\be
\item[-] Section 2 examines the genericity of stably Morse-Smale pairs.
\item[-] Section 3 presents our alternative definition of the Thom-Smale-Witten complex for manifolds with boundary and proves its quasi-isomorphism to the one in \cite{kronheimer2007Monopoles}.
\item[-] Section 4 develops the equivariant Thom-Smale-Witten complex for closed manifolds with reflection actions.
\item[-] Section 5 proves the well-definedness of the chain complexes from Sections 3 and 4.
\item[-] Section 6 explores generalizations to groups generated by multiple reflections and computes an example of an upright higher genus surface.
\ee 

\subsection{Relation to other works}

Obstruction bundle gluing is first developed by \cite{hutchings2009gluing} for gluing obstructed pseudo-holomorphic curves. 
For the Morse homology for manifolds with boundary, \cite{kronheimer2007Monopoles} also glues broken trajectories, but in that case, every broken trajectory glues. 
In the Morse setting, \cite{bao2024morse} extended this to the general case where descending and ascending manifolds intersect cleanly. 
Our work is a very special case of \cite{bao2024morse}, but we manage to write down explicitly which broken trajectories glue, and when they glue.
We acknowledge that Ipsita Datta and Yuan Yao are concurrently working on obstruction bundle gluing in the Morse case in \cite{datta2024obstruction}.

The construction of the chain complex after determining gluing behavior has been approached differently in various works. 
In \cite{bao2024computable}, the chain complex is generated by all critical points, and to define a differential, an abstract perturbation is carried out as in the standard Kuranishi setting, which is independent of the geometry of the ambient manifold and hence has the advantage of being able to generalize to the Floer case straightforwardly. However, there are many choices of perturbations, although up to homotopy there are finitely many. In \cite{kronheimer2007Monopoles}, the chain complex is generated by all interior critical points and stable boundary critical points, and the differential counts broken trajectories. In our approach (except Section 6), we include all critical points, and more than that, each unstable critical point $\cP{p}$ gives rise to three generators $\cP{p}$, $\cP{p}_+$, and $\cP{p}_-$, and the differential counts certain broken trajectories. The choices we made are the orientation of descending manifolds, and a nonzero normal vector for each critical point in the fixed point set.

\cbu
Seidel and Smith in \cite{seidel2010localization} provide two definitions of equivariant Morse homology when $G = \Z_2$: 
In Section 2a, they construct a Borel type of Equivariant Morse homology using a family of Morse function on $M$ parametrized by $BG$. In Section 2b, they give another definition of Equivariant Morse homology using a $U$-map, under the assumption that there exists an equivariant metric $g$ such that the pair $(f,g)$ is Morse-Smale and the assumption that for any critical point $\cP{p} \in M^G$, $\ind_{f}(\cP{p}) - \ind_{f_{M^G}}(\cP{p})$ is independent of $\cP{p}$. Note that in the case when $f$ is stable, this is satisfied and $\ind_{f}(\cP{p}) - \ind_{f_{M^G}}(\cP{p}) = 0$. Using their second definition, a Smith type of inequality is achieved. On the other hand, Smith inequality using Morse theory is achieved in general in \cite{bao2024morse}, and this paper is a special (reflections only), limiting case of \cite{bao2024morse}, so the Smith inequality follows directly from the definition in this paper (See Remark 9.5 in \cite{bao2024morse}). 
\cb

\section{Genericity of Stably Morse-Smale Pair}
In this section, we define the notion of a stably Morse-Smale pair. We show that when the group $G$ is generated by a reflection, the stably Morse-Smale condition is generic.

\subsection{Definition of Stably Morse-Smale for an Arbitrary Group}

Let $M$ be a smooth manifold, $G$ a finite group acting on $M$ via diffeomorphisms, $f: M \to \R$ a $G$-equivariant (or simply equivariant) Morse function, and $g$ an equivariant metric on $M$.

We first recall some preliminary properties of equivariant Morse functions and metrics. The proofs of the following two lemmas are Lemma~7.2 and Lemma~7.3 in \cite{bao2024morse}.

Let $H \subset G$ be a subgroup. 
\begin{lemma} \label{lemma: equivariant of tangent is the tangent of equivariant}
    For any $\cP{p} \in M^H$, the $H$-equivariant subspace $(T_\cP{p} M)^H$ is the same as $T_\cP{p} (M^H)$.
\end{lemma}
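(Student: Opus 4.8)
The plan is to construct an $H$-equivariant chart around $\cP{p}$ via the exponential map of an invariant metric, and then read off both assertions at once from the linear structure in that chart. Since $H$ is finite, nothing in the proof needs the full group $G$.

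First I would fix an $H$-invariant Riemannian metric on $M$ — one may simply take the given equivariant metric $g$, or average an arbitrary metric over the finite group $H$. With respect to such a metric the geodesic flow is $H$-equivariant, so the exponential map satisfies $\phi \circ \exp_{\cP{p}} = \exp_{\cP{p}} \circ\, d\phi_{\cP{p}}$ for every $\phi \in H$. Here I use that $\cP{p} \in M^H$, so each $d\phi_{\cP{p}}$ is a well-defined linear automorphism of $T_{\cP{p}} M$, and that, $H$ being finite, one can choose a star-shaped neighborhood $U$ of $0$ in $T_{\cP{p}} M$ which is $d\phi_{\cP{p}}$-invariant for all $\phi \in H$ and on which $\exp_{\cP{p}}$ restricts to a diffeomorphism onto an open neighborhood $V$ of $\cP{p}$.

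Next I would identify $M^H \cap V$ inside this chart. A point $\exp_{\cP{p}}(v)$ with $v \in U$ lies in $M^H$ if and only if $\exp_{\cP{p}}(d\phi_{\cP{p}} v) = \exp_{\cP{p}}(v)$ for all $\phi \in H$; since $\exp_{\cP{p}}$ is injective on $U$ and $d\phi_{\cP{p}} v \in U$, this is equivalent to $d\phi_{\cP{p}} v = v$ for all $\phi$, i.e. to $v \in (T_{\cP{p}} M)^H$. Thus $\exp_{\cP{p}}$ carries $U \cap (T_{\cP{p}} M)^H$ diffeomorphically onto $M^H \cap V$. Because $(T_{\cP{p}} M)^H$ is a linear subspace, this simultaneously shows that $M^H$ is a smooth submanifold near $\cP{p}$ and, differentiating $\exp_{\cP{p}}$ at $0$ (where its differential is the identity), that $T_{\cP{p}}(M^H) = (T_{\cP{p}} M)^H$.

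There is no genuinely hard step here — it is the standard linearization of a fixed locus — and the only point requiring care is the $H$-equivariance of $\exp_{\cP{p}}$, which hinges on choosing the metric $H$-invariant and on $\cP{p}$ being a fixed point so that $d\phi_{\cP{p}}$ is an endomorphism of $T_{\cP{p}} M$. One could instead prove the two inclusions separately — ``$\subseteq$'' by differentiating $H$-fixed curves through $\cP{p}$, and ``$\supseteq$'' by exponentiating an $H$-fixed tangent vector to produce such a curve — but passing through the equivariant chart is cleaner and yields the submanifold structure of $M^H$ as a byproduct.
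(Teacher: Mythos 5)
Your proof is correct, and it is the standard argument via the $H$-equivariant exponential map of an averaged metric. The paper does not spell out a proof of this lemma; it cites Lemma 7.2 of \cite{bao2024morse}, which uses the same linearization-of-the-fixed-locus argument you give here.
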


For this reason, we write $T_{\cP{p}}M^H$.

\begin{lemma}
    $\crit(f_{M^H}) = \crit(f) \cap M^H$.
\end{lemma}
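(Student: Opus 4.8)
The plan is to prove the equality $\crit(f_{M^H}) = \crit(f) \cap M^H$ by a double inclusion, using the identification $T_{\cP{p}}M^H = (T_\cP{p} M)^H$ from Lemma~\ref{lemma: equivariant of tangent is the tangent of equivariant} together with the equivariance of $f$ and the invariance of the fixed-point set $M^H$. Fix $\cP{p} \in M^H$. The inclusion $\crit(f) \cap M^H \subseteq \crit(f_{M^H})$ is the easy direction: if $df_\cP{p} = 0$ on all of $T_\cP{p}M$, then it certainly vanishes on the subspace $T_\cP{p}M^H$, so $\cP{p}$ is a critical point of the restriction. The substance is in the reverse inclusion.

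For the reverse inclusion, suppose $\cP{p} \in M^H$ is a critical point of $f_{M^H}$, i.e. $df_\cP{p}$ vanishes on $T_\cP{p}M^H = (T_\cP{p}M)^H$. I want to conclude $df_\cP{p} = 0$ on all of $T_\cP{p}M$. Because $f$ is $G$-equivariant (with trivial action on $\R$), $f \circ h = f$ for every $h \in H$, so $df_\cP{p}$ is $H$-invariant as an element of $T_\cP{p}^* M$, where $H$ acts via the dual of $dh_\cP{p}$. Equivalently, $df_\cP{p}$ annihilates the subspace spanned by vectors of the form $v - dh_\cP{p}(v)$, that is, it factors through the coinvariants. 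Now average: since $H$ is finite, the projection $\pi = \frac{1}{|H|}\sum_{h \in H} dh_\cP{p} : T_\cP{p}M \to (T_\cP{p}M)^H$ is a well-defined linear retraction onto the invariant subspace, and $H$-invariance of $df_\cP{p}$ gives $df_\cP{p} = df_\cP{p} \circ \pi$. Since $df_\cP{p}$ vanishes on $(T_\cP{p}M)^H = T_\cP{p}M^H$ by hypothesis, we get $df_\cP{p} = df_\cP{p}\circ\pi = 0$ on all of $T_\cP{p}M$, so $\cP{p} \in \crit(f)$. Combined with $\cP{p} \in M^H$, this gives $\cP{p} \in \crit(f)\cap M^H$.

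The only mild subtlety — and the step I would be most careful about — is making sure the averaging argument is applied at the level of the differential at the fixed point $\cP{p}$ rather than trying to average $f$ itself (which is already invariant and gives nothing new): the key point is that equivariance of $f$ forces the stronger statement that $df_\cP{p}$ kills not just $T_\cP{p}M^H$ but its complement $\ker\pi = \mathrm{span}\{v - dh_\cP{p}(v) : h \in H, v \in T_\cP{p}M\}$ as well. Since we are working over $\R$ and $H$ is finite, there is no characteristic obstruction to this averaging, and the decomposition $T_\cP{p}M = (T_\cP{p}M)^H \oplus \ker\pi$ is clean. I would present this as a short two-paragraph argument: first dispose of the easy inclusion in a sentence, then give the averaging/equivariance argument for the hard inclusion, invoking Lemma~\ref{lemma: equivariant of tangent is the tangent of equivariant} at the single point where the identification $T_\cP{p}M^H = (T_\cP{p}M)^H$ is used.
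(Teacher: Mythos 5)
Your proof is correct, and the argument is clean and self-contained. The key chain is exactly right: equivariance of $f$ gives $df_\cP{p}\circ dh_\cP{p} = df_\cP{p}$ for $h \in H$ and $\cP{p} \in M^H$, the averaged map $\pi = \tfrac{1}{|H|}\sum_{h\in H} dh_\cP{p}$ is a projection onto $(T_\cP{p}M)^H$, and $df_\cP{p} = df_\cP{p}\circ\pi$ then forces $df_\cP{p}$ to vanish everywhere once it vanishes on $(T_\cP{p}M)^H = T_\cP{p}M^H$. Note the paper itself does not prove this lemma here; it defers to Lemma~7.3 of the cited reference, so there is no in-paper proof to compare line-by-line. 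For context, there is a commonly-seen alternative route: fix a $G$-invariant metric, observe that $\op{grad}_f$ is $G$-equivariant and hence $\op{grad}_f(\cP{p}) \in (T_\cP{p}M)^H = T_\cP{p}M^H$ for $\cP{p}\in M^H$, so $\op{grad}_f(\cP{p})$ is nonzero if and only if its restriction (which is $\op{grad}_{f|_{M^H}}(\cP{p})$ with respect to the induced metric) is nonzero. That version trades the linear-algebraic averaging for the geometric statement that the gradient is tangent to the fixed locus; your version has the small advantage of not needing to introduce a metric at all, making the lemma visibly metric-independent.
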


\begin{lemma}\label{lemma: boundary implies interior}
Let $\cP{p}, \cP{q} \in \crit(f|_{M^H})$ be two critical points. Suppose that $\cP{q}$ is stable, and $\mathscr D_\cP{p} \cap M^H$ intersects $\mathscr A_\cP{q} \cap M^H$ transversely in $M^H$. Then $\mathscr D_\cP{p}$ intersects $\mathscr A_\cP{q}$ transversely in $M$.
\end{lemma}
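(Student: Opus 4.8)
The plan is to bootstrap transversality on $M$ from the given transversality on $M^H$, using the stability of $\cP{q}$ to supply the directions normal to $M^H$; if $\mathscr{D}_{\cP{p}}\cap\mathscr{A}_{\cP{q}}$ is empty there is nothing to prove, so assume it is not.

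\emph{Preliminaries.} Since $f$ and $g$ are $H$-equivariant, the negative gradient vector field is $H$-equivariant, so its flow $\varphi_t$ preserves $M^H$. Hence $\mathscr{D}_{\cP{p}}\cap M^H$ and $\mathscr{A}_{\cP{q}}\cap M^H$ are exactly the descending manifold of $\cP{p}$ and the ascending manifold of $\cP{q}$ for the restricted Morse pair $(f|_{M^H},g|_{M^H})$; in particular they are smooth submanifolds of $M$ contained in $\mathscr{D}_{\cP{p}}$ and $\mathscr{A}_{\cP{q}}$ respectively, so at each $x\in\mathscr{D}_{\cP{p}}\cap\mathscr{A}_{\cP{q}}\cap M^H$ their tangent spaces lie inside $T_x\mathscr{D}_{\cP{p}}$ and $T_x\mathscr{A}_{\cP{q}}$. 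Fix an $H$-invariant splitting $TM|_{M^H}=TM^H\oplus N$, where $N$ is the normal bundle of $M^H$ (a line bundle when $H$ is generated by a reflection).

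\emph{Step 1: stability of $\cP{q}$ feeds the normal bundle into $T\mathscr{A}_{\cP{q}}$.} The Hessian $\hess{f}(\cP{q})$ is symmetric and $H$-equivariant, hence block diagonal for $T_{\cP{q}}M^H\oplus N_{\cP{q}}$, and stability of $\cP{q}$ says precisely that its $N_{\cP{q}}$-block is positive definite. Therefore $T_{\cP{q}}\mathscr{A}_{\cP{q}}$, the sum of the positive eigenspaces of $\hess{f}(\cP{q})$, contains $N_{\cP{q}}$ and splits as $T_{\cP{q}}(\mathscr{A}_{\cP{q}}\cap M^H)\oplus N_{\cP{q}}$; equivalently $\mathscr{A}_{\cP{q}}$ meets $M^H$ transversely at $\cP{q}$. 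Transversality of two fixed submanifolds holds on an open subset of their intersection, and $\varphi_t$ carries both $\mathscr{A}_{\cP{q}}$ and $M^H$ to themselves while every point of $\mathscr{A}_{\cP{q}}\cap M^H$ is the backward flow of a point of $\mathscr{A}_{\cP{q}}\cap M^H$ arbitrarily close to $\cP{q}$; hence $T_y\mathscr{A}_{\cP{q}}=T_y(\mathscr{A}_{\cP{q}}\cap M^H)\oplus N_y$ for every $y\in\mathscr{A}_{\cP{q}}\cap M^H$.

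\emph{Step 2: transversality at points of $M^H$, and the main obstacle.} For $x\in\mathscr{D}_{\cP{p}}\cap\mathscr{A}_{\cP{q}}$ with $x\in M^H$,
\[
T_x\mathscr{D}_{\cP{p}}+T_x\mathscr{A}_{\cP{q}}\ \supseteq\ T_x(\mathscr{D}_{\cP{p}}\cap M^H)+\bigl(T_x(\mathscr{A}_{\cP{q}}\cap M^H)\oplus N_x\bigr)\ =\ T_xM^H+N_x\ =\ T_xM,
\]
the first equality being the hypothesis that $\mathscr{D}_{\cP{p}}\cap M^H$ and $\mathscr{A}_{\cP{q}}\cap M^H$ intersect transversely in $M^H$, combined with Step 1; so $\mathscr{D}_{\cP{p}}\pitchfork\mathscr{A}_{\cP{q}}$ at $x$. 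What remains — and this is the main obstacle — is an intersection point $x\notin M^H$, i.e. a connecting trajectory $\gamma$ from $\cP{p}$ to $\cP{q}$ disjoint from $M^H$. The idea is again to exhibit a $(\op{codim}M^H)$-dimensional subspace of $T_x\mathscr{A}_{\cP{q}}$ playing the role of $N_x$: working near the $\cP{q}$-end in $H$-invariant coordinates in which $f$ is $\tfrac12\langle A\,\cdot\,,\,\cdot\,\rangle$ in the normal directions with $A$ positive definite, each normal eigendirection of $\hess{f}(\cP{q})$ is attracting for the linearized negative gradient flow, so it extends to a Jacobi field along $\gamma$ decaying at $+\infty$; transporting these back to $x$ produces the desired subspace of $T_x\mathscr{A}_{\cP{q}}$. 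Matching it against $T_x\mathscr{D}_{\cP{p}}$ and the hypothesis through the asymptotic and linearized-flow bookkeeping at the two ends of $\gamma$ — controlling, in particular, the higher-order error terms in the normal equation near $\cP{q}$ — is the technical heart of the argument and the place where the stability of $\cP{q}$ is genuinely used; everything else is linear algebra and the openness of transversality.
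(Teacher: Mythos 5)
Your Steps 1 and 2 reproduce the paper's actual argument. The paper likewise uses that stability of $\cP{q}$ forces $\mathscr D_{\cP{q}} \subset M^H$, hence $T_{\cP{q}} M^H + T_{\cP{q}} \mathscr A_{\cP{q}} = T_{\cP{q}} M$, propagates this to $z \in \mathscr A_{\cP{q}} \cap M^H$ near $\cP{q}$ by openness, and then adds the transversality-in-$M^H$ hypothesis to get $T_z \mathscr D_{\cP{p}} + T_z \mathscr A_{\cP{q}} \supset T_z M$. The only presentational difference is that the paper first reduces, via flow-invariance of transversality, to intersection points in a small neighborhood of $\cP{q}$ and verifies the condition there, whereas you argue at every $y \in \mathscr A_{\cP{q}} \cap M^H$; these are equivalent.

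Where the proposal falls short of a proof is exactly the case you flag as ``the main obstacle'': an intersection point $x \notin M^H$, i.e.\ a trajectory $\gamma$ from $\cP{p}$ to $\cP{q}$ disjoint from $M^H$ (possible when $\cP{p}$ is unstable, so that $\mathscr D_{\cP{p}} \not\subset M^H$). Your sketch --- push the normal eigendirections at $\cP{q}$ backward along $\gamma$ as decaying solutions of the linearized equation and then ``match against $T_x \mathscr D_{\cP{p}}$ and the hypothesis'' --- identifies the right objects, but you concede the bookkeeping is not carried out, so this is not a completed argument. It is worth noting that the paper's printed proof also only checks transversality at $z \in M^H \cap \mathscr A_{\cP{q}}$ near $\cP{q}$ and does not treat $z \notin M^H$ separately; in each of the paper's actual invocations of the lemma, either $\cP{p}$ is also stable (so $\mathscr D_{\cP{p}} \subset M^H$ and no such $\gamma$ exists) or the stably Morse--Smale hypothesis independently gives transversality at points with trivial stabilizer, so the omission is benign in context. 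As a blind proof of the lemma as stated, however, yours is incomplete, and the incompleteness is concentrated exactly where you say it is.
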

\begin{proof}
    Let $\mathscr D = \mathscr D_\cP{p}$ and $\mathscr A = \mathscr A_\cP{q}$. Note that it suffices to show that $\mathscr D$ intersects $\mathscr A$ transversely in a small neighborhood of $\cP{q}$. Since $\cP{q}$ is stable, we have $\mathscr D_\cP{q} \subset M^{\stab(\cP{q})} \subset M^H$. Therefore, $T_\cP{q} M^H + T_\cP{q} \mathscr A = T_\cP{q} M$, and hence for \cbu $z \in M^H \cap \mathscr A$ \cb close to $\cP{q}$, $T_z M^H + T_z \mathscr A = T_z M$. By assumption, $T_z \mathscr D + T_z \mathscr A \supset T_z M^H$, and hence \cbu $T_z \mathscr D + T_z \mathscr A \supset T_z M$. \cb
\end{proof}

\begin{definition}[Clean Intersection]
    Two submanifolds $X$ and $Y$ of $Z$ are said to \emph{intersect cleanly} if $X \cap Y$ is a submanifold of $Z$, and for any $z \in X \cap Y$, $T_z(X \cap Y) = T_z X \cap T_z Y$.
\end{definition}

\begin{definition}[Stably Morse-Smale Pair]\label{def: stably morse smale}
    We say the pair $(f,g)$ is \emph{stably Morse-Smale} if for all $\cP{p}, \cP{q} \in \crit(f)$, the descending manifold $\mathscr D_\cP{p}$ intersects the ascending manifold $\mathscr A_\cP{q}$ \emph{cleanly}, and for any $z \in \mathscr D_{\cP{p}} \cap \mathscr A_{\cP{q}}$, 
    $$T_z \mathscr{D}_\cP{p}  + T_z \mathscr{A}_{\cP{q}}  \supset T_z M^{\stab(z)}.$$
\end{definition}
\begin{lemma}
    Suppose that the pair $(f,g)$ is stably Morse-Smale and that $f$ is stable. Then the pair $(f,g)$ is Morse-Smale.
\end{lemma}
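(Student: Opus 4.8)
The plan is to show that under the stated hypotheses the clean intersection $\mathscr{D}_{\cP{p}} \cap \mathscr{A}_{\cP{q}}$ is in fact transverse, i.e.\ that for every $z \in \mathscr{D}_{\cP{p}} \cap \mathscr{A}_{\cP{q}}$ we have $T_z \mathscr{D}_{\cP{p}} + T_z \mathscr{A}_{\cP{q}} = T_z M$. By the definition of stably Morse-Smale we already know $T_z \mathscr{D}_{\cP{p}} + T_z \mathscr{A}_{\cP{q}} \supset T_z M^{\stab(z)}$, so the entire content is to upgrade this to all of $T_z M$ using stability of $f$. The key geometric input is that a gradient trajectory through $z$ flows from $\cP{p}$ to $\cP{q}$, both of which are critical points of $f$, hence stable. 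First I would observe that $z$ lies on such a trajectory, so $\stab(z) \supseteq \stab(\cP{p})$ and $\stab(z) \supseteq \stab(\cP{q})$ — actually, since the metric is equivariant the whole trajectory has a constant stabilizer away from its endpoints, and this stabilizer contains $\stab(\cP{q})$ in the limit; so $\stab(z) \subseteq \stab(\cP{q})$, giving $M^{\stab(\cP{q})} \subseteq M^{\stab(z)}$. (One should be slightly careful about whether $\stab(z)=\stab(\cP{q})$ or merely contains it; for the group $G=\langle\phi\rangle$ the only possibilities are trivial or all of $G$, but the argument should be phrased for general finite $G$ as in the ambient setup.)

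Next I would run essentially the argument of Lemma~\ref{lemma: boundary implies interior}, but now with $H = \stab(\cP{q})$ playing the role of the subgroup and $M^{\stab(z)}$ supplying the extra directions. Since $\cP{q}$ is stable, its descending manifold satisfies $\mathscr{D}_{\cP{q}} \subset M^{\stab(\cP{q})}$, and therefore $T_{\cP{q}} M^{\stab(\cP{q})} + T_{\cP{q}} \mathscr{A}_{\cP{q}} = T_{\cP{q}} M$ (because $\mathscr{A}_{\cP{q}}$ and $\mathscr{D}_{\cP{q}}$ are complementary at $\cP{q}$, and $\mathscr{D}_{\cP{q}}$ sits inside the fixed submanifold). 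By continuity of the tangent spaces, the same spanning statement $T_w M^{\stab(\cP{q})} + T_w \mathscr{A}_{\cP{q}} = T_w M$ holds for all $w \in \mathscr{A}_{\cP{q}} \cap M^{\stab(\cP{q})}$ sufficiently close to $\cP{q}$. Since $\mathscr{A}_{\cP{q}}$ is invariant under the gradient flow, and since transversality at one point of a trajectory propagates along it, it suffices to establish transversality of $\mathscr{D}_{\cP{p}}$ and $\mathscr{A}_{\cP{q}}$ at a point $z$ near $\cP{q}$. For such $z$ we have $T_z \mathscr{D}_{\cP{p}} + T_z \mathscr{A}_{\cP{q}} \supset T_z M^{\stab(z)} \supseteq T_z M^{\stab(\cP{q})}$, and combining with $T_z M^{\stab(\cP{q})} + T_z \mathscr{A}_{\cP{q}} = T_z M$ yields $T_z \mathscr{D}_{\cP{p}} + T_z \mathscr{A}_{\cP{q}} = T_z M$, as desired. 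Then flowing back recovers transversality at the original point.

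The main obstacle — and the step that needs the most care — is the bookkeeping of stabilizers along a gradient trajectory and the reduction "transversality near $\cP{q}$ implies transversality everywhere along the flow.'' The latter is standard for Morse-Smale theory (the flow is a diffeomorphism carrying $\mathscr{A}_{\cP{q}}$ to itself and $\mathscr{D}_{\cP{p}}$ to itself, so it transports the transversality condition), but one must make sure $z$ can actually be flowed into an arbitrarily small neighborhood of $\cP{q}$, which holds because every point of $\mathscr{A}_{\cP{q}}$ converges to $\cP{q}$ under the forward flow. For the stabilizer point: if $z$ is not itself a critical point then $z$ lies in the interior of a trajectory $\gamma$ with $\gamma(-\infty)=\cP{p}$, $\gamma(+\infty)=\cP{q}$; equivariance of $g$ forces $g\cdot\gamma$ to be a trajectory for each $g\in G$, and if $g$ fixes $z$ it fixes all of $\gamma$, hence fixes $\cP{q}$, so $\stab(z)\subseteq\stab(\cP{q})$ and $M^{\stab(\cP{q})}\subseteq M^{\stab(z)}$. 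Once these two points are in hand, the proof closes exactly as in Lemma~\ref{lemma: boundary implies interior}.
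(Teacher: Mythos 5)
Your overall strategy matches the paper's: identify the stabilizer of $z$ by flowing along the gradient trajectory, use the stably Morse-Smale condition to obtain transversality inside a fixed-point submanifold, and then propagate to transversality in $M$ near $\cP{q}$ using stability of $\cP{q}$. That last propagation step is precisely the content of Lemma~\ref{lemma: boundary implies interior}, which the paper simply cites; you reprove it inline, which is a harmless redundancy.

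However, there is a gap in your choice of subgroup. You work throughout with $H = \stab(\cP{q})$, but the stably Morse-Smale hypothesis only supplies $T_z \mathscr D_{\cP{p}} + T_z \mathscr A_{\cP{q}} \supseteq T_z M^{\stab(z)}$ for the specific subgroup $\stab(z)$, and your continuity argument only shows $T_w M^{\stab(\cP{q})} + T_w \mathscr A_{\cP{q}} = T_w M$ for $w \in \mathscr A_{\cP{q}} \cap M^{\stab(\cP{q})}$. To combine the two at the point $z$ you need $z \in M^{\stab(\cP{q})}$, i.e.\ $\stab(\cP{q}) \subseteq \stab(z)$. What you actually establish (by forward flow) is the reverse inclusion $\stab(z) \subseteq \stab(\cP{q})$, which for a general finite $G$ can be strict; when it is, $z \notin M^{\stab(\cP{q})}$ and the expression $T_z M^{\stab(\cP{q})}$ does not even make sense. (Concretely: take $G = \Z_2 \times \Z_2 = \langle g_1, g_2\rangle$, $\stab(\cP{p}) = \langle g_1\rangle$, $\stab(\cP{q}) = G$, both stable; a trajectory from $\cP{p}$ to $\cP{q}$ lies in $M^{\langle g_1\rangle}$ but generically not in $M^G$.) You flag this worry in your parenthetical but do not resolve it — you note it is harmless for $G = \langle \phi\rangle$ and proceed with $\stab(\cP{q})$ anyway, whereas the lemma is stated in Section 2.1 for an arbitrary finite $G$.

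The correct subgroup is $H = \stab(z)$, and the paper identifies it by flowing \emph{backward} to $\cP{p}$ and invoking stability of $\cP{p}$, which you never use: the backward flow gives $\stab(z) \subseteq \stab(\cP{p})$, while stability of $\cP{p}$ gives $\mathscr D_{\cP{p}} \subset M^{\stab(\cP{p})}$, hence $z \in M^{\stab(\cP{p})}$ and $\stab(\cP{p}) \subseteq \stab(z)$; so $\stab(z) = \stab(\cP{p})$. Now $z \in M^{\stab(z)} = M^{\stab(\cP{p})}$ automatically, the stably Morse-Smale condition yields transversality of $\mathscr D_{\cP{p}}$ and $\mathscr A_{\cP{q}} \cap M^H$ inside $M^H$ for $H = \stab(\cP{p})$, and Lemma~\ref{lemma: boundary implies interior} (which only needs $\cP{q}$ stable and $\cP{p}, \cP{q} \in M^H$ — the latter holds since $\stab(\cP{p}) \subseteq \stab(\cP{q})$ forces $\cP{q} \in M^{\stab(\cP{q})} \subseteq M^{\stab(\cP{p})}$) upgrades this to transversality in $M$. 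Your inline version of that lemma then goes through once you replace $\stab(\cP{q})$ by $\stab(\cP{p})$ everywhere: $\mathscr D_{\cP{q}} \subset M^{\stab(\cP{q})} \subset M^{\stab(\cP{p})}$ still gives $T_{\cP{q}} M^{\stab(\cP{p})} + T_{\cP{q}} \mathscr A_{\cP{q}} = T_{\cP{q}} M$, and $z \in M^{\stab(\cP{p})}$ means the continuity argument actually applies to $z$.
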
 

\begin{proof}
    For any $z \in \mathscr D_{\cP{p}} \cap \mathscr A_{\cP{q}}$, let $H = \stab(z)$.
    Since the initial condition of the equivariant ODE $$\dot \gamma + \op{grad}_{f,g}(\gamma) = 0, \quad \gamma(0) = z$$ is preserved by $H$, the limit of the solution is also preserved by $H$. In other words, $H \subset \stab(\cP{p})$. Since $f$ is stable, $\mathscr D_\cP{p} \subset M^{\stab(\cP{p})}$. Hence, $z \in M^{\stab(\cP{p})}$, which implies $\stab(\cP{p}) \subset H$. This means $\stab(z) = \stab(\cP{p})$.
    Then the stably Morse-Smale condition implies that $\mathscr D_\cP{p}$ intersects $\mathscr A_\cP{q} \cap M^H$ transversely in $M^H$. By Lemma~\ref{lemma: boundary implies interior}, $\mathscr D_\cP{p}$ intersects $\mathscr A_\cP{q}$ transversely in $M$.
\end{proof}

\subsection{Properties of Stably Morse-Smale for Reflections}
In this section, we study the stably Morse-Smale condition, specializing to the case when $G$ is generated by a reflection.

\begin{example}\label{example: double}
Let $N$ be a manifold with boundary. Then $M \coloneq N \cup_{\partial N} \overline{N}$ admits a reflection group action. 
\end{example}

Note that $M \backslash M^G$ is not necessarily disconnected, as seen in the example of the Klein bottle.

\begin{lemma}
    Suppose $G$ is generated by a reflection. Then for an equivariant Morse function $f$ and an equivariant metric $g$, the pair $(f,g)$ is stably Morse-Smale if and only if
    for any critical points $\cP{p}, \cP{q}$, if $\D_\cP{p}$ intersects $\A_\cP{q}$ non-transversely in $M$, then:
    \begin{enumerate}
        \item $\cP{p}, \cP{q} \in M^G$;
        \item $\cP{p}$ is stable and $\cP{q}$ is unstable;
        \item $\D_\cP{p} \pitchfork_{M^G} \A_{\cP{q}}$.
    \end{enumerate}
\end{lemma}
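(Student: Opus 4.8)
The plan is to localize the stably-Morse-Smale condition at each point $z$ of each intersection $\D_\cP{p}\cap\A_\cP{q}$ and reduce it, via the $\phi$-action on tangent spaces, to a finite-dimensional linear-algebra statement from which the three conditions can be read off. Since $G=\langle\phi\rangle$, the stabilizer $\stab(z)$ of a point $z\in M$ is all of $G$ when $z\in M^G$ and is trivial otherwise, so $M^{\stab(z)}$ is $M^G$ or $M$ accordingly; thus Definition~\ref{def: stably morse smale} requires genuine transversality of $\D_\cP{p}$ and $\A_\cP{q}$ in $M$ at every point $z\notin M^G$, and at points $z\in M^G$ requires only that $T_z\D_\cP{p}+T_z\A_\cP{q}\supseteq T_zM^G$, together with cleanness of the intersection. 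Because $f$ and $g$ are equivariant, $\op{grad}_{f,g}$ is $\phi$-equivariant, so the negative gradient flow preserves $M^G$; hence each gradient trajectory lies entirely inside $M^G$ or entirely outside it, each $\D_\cP{p}$ and $\A_\cP{q}$ is $\phi$-invariant, and for $\cP{p}\in M^G$ the slice $\D_\cP{p}\cap M^G$ is exactly the descending manifold of $\cP{p}$ for $f|_{M^G}$ (and similarly for $\A_\cP{q}$). A first observation to record is that an intersection point $z\in\D_\cP{p}\cap\A_\cP{q}$ can lie in $M^G$ only when both $\cP{p},\cP{q}\in M^G$, since then the whole trajectory through $z$ lies in $M^G$.

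Next I would build the local model at a critical point $\cP{p}\in M^G$. Since $f$ and $g$ are equivariant, $\hess{f}(\cP{p})$ commutes with $d\phi_\cP{p}$, hence preserves the $g$-orthogonal splitting $T_\cP{p}M=T_\cP{p}M^G\oplus N_\cP{p}$, where $N_\cP{p}$ is the one-dimensional $(-1)$-eigenspace of $d\phi_\cP{p}$. Because $N_\cP{p}$ is one-dimensional, the negative eigenspace of $\hess{f}(\cP{p})$ either lies in $T_\cP{p}M^G$ — precisely the statement that $\cP{p}$ is stable, and then $\D_\cP{p}\subseteq M^G$ — or contains $N_\cP{p}$ — the statement that $\cP{p}$ is unstable, and then $\D_\cP{p}$ is transverse to $M^G$ at every point of $\D_\cP{p}\cap M^G$; dually, $\A_\cP{q}\subseteq M^G$ if $\cP{q}$ is unstable and $\A_\cP{q}$ is transverse to $M^G$ everywhere if $\cP{q}$ is stable. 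I would also verify, by propagating the identity $T_\cP{p}\D_\cP{p}\cap T_\cP{p}M^G=T_\cP{p}(\D_\cP{p}\cap M^G)$ forward along the flow (which preserves $M^G$), that $\D_\cP{p}$ automatically meets $M^G$ cleanly, and similarly for $\A_\cP{q}$. Then, for $z\in M^G\cap\D_\cP{p}\cap\A_\cP{q}$ (forcing $\cP{p},\cP{q}\in M^G$), the $d\phi_z$-invariant subspaces $T_z\D_\cP{p}$ and $T_z\A_\cP{q}$ split along $T_zM^G\oplus N_z$ as $A\oplus B$ and $C\oplus D$, where $A=T_z(\D_\cP{p}\cap M^G)$, $C=T_z(\A_\cP{q}\cap M^G)$, $B=N_z$ iff $\cP{p}$ is unstable, and $D=N_z$ iff $\cP{q}$ is stable; hence $T_z\D_\cP{p}+T_z\A_\cP{q}=(A+C)\oplus(B+D)$. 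From this: $\D_\cP{p}$ and $\A_\cP{q}$ are transverse at $z$ in $M$ iff $A+C=T_zM^G$ and $B+D=N_z$ — that is, iff they are transverse at $z$ inside $M^G$ and ($\cP{p}$ is unstable or $\cP{q}$ is stable) — whereas $T_z\D_\cP{p}+T_z\A_\cP{q}\supseteq T_zM^G$ iff $A+C=T_zM^G$.

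The lemma would then follow by bookkeeping with these two criteria. For the forward direction: assuming $(f,g)$ is stably Morse-Smale and $\D_\cP{p}$ meets $\A_\cP{q}$ non-transversely in $M$ at some $z$, the stably-Morse-Smale inequality forces transversality at every point outside $M^G$, so $z\in M^G$ and hence $\cP{p},\cP{q}\in M^G$, giving (1); at $z$ the inequality gives $A+C=T_zM^G$, so non-transversality at $z$ forces $B+D\ne N_z$, i.e. $\cP{p}$ stable and $\cP{q}$ unstable, giving (2); but then $\D_\cP{p},\A_\cP{q}\subseteq M^G$, so all of $\D_\cP{p}\cap\A_\cP{q}$ lies in $M^G$, where the stably-Morse-Smale inequality is exactly transversality inside $M^G$, giving (3). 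For the converse: given any pair $(\cP{p},\cP{q})$, if they are transverse in $M$ the stably-Morse-Smale conditions for that pair are automatic; otherwise (1)--(3) give $\cP{p},\cP{q}\in M^G$ with $\cP{p}$ stable and $\cP{q}$ unstable, so $\D_\cP{p}=\D_\cP{p}\cap M^G$ and $\A_\cP{q}=\A_\cP{q}\cap M^G$, and (3) makes $\D_\cP{p}\cap\A_\cP{q}$ a transverse intersection inside $M^G$; hence it is a clean submanifold of $M$ with $T_z(\D_\cP{p}\cap\A_\cP{q})=T_z\D_\cP{p}\cap T_z\A_\cP{q}$ and $T_z\D_\cP{p}+T_z\A_\cP{q}=T_zM^G=T_zM^{\stab(z)}$, so the stably-Morse-Smale conditions hold for that pair.

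The step I expect to be the main obstacle is the one flagged in the second paragraph: proving that $\D_\cP{p}$ and $\A_\cP{q}$ meet $M^G$ cleanly for free, and that their slices with $M^G$ are the descending and ascending manifolds of $f|_{M^G}$. This is exactly where the flow-invariance of $M^G$ does the work — it converts a potential transversality problem into the elementary tangent-space decomposition above; once it is established, the remaining argument is unwinding Definition~\ref{def: stably morse smale} and keeping track of the two one-dimensional pieces $B$ and $D$.
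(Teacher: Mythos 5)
Your proof is correct, and it proves the same statement by a more explicit route than the paper. The paper's argument for the forward direction is terse: it deduces that $z\in M^G$ forces $\cP{p},\cP{q}\in M^G$ and $T_z\mathscr D+T_z\mathscr A=T_zM^G$, then invokes Lemma~\ref{lemma: boundary implies interior} once for $f$ (to rule out $\cP{q}$ stable) and once for $-f$ (to rule out $\cP{p}$ unstable), noting that a normal vector $n\perp T_\cP{p}M^G$ must be an eigenvector of $\hess{f}(\cP{p})$; the converse is dismissed as ``obvious.'' You instead unpack the same mechanism directly: you split each $d\phi_z$-invariant tangent space $T_z\mathscr D_\cP{p}$, $T_z\mathscr A_\cP{q}$ into its $\pm 1$ eigenspaces $A\oplus B$, $C\oplus D$ along $T_zM^G\oplus N_z$, observe that because $N_z$ is one-dimensional the summands $B,D$ are determined by the (un)stability of $\cP{p},\cP{q}$, and read off transversality as ``$A+C=T_zM^G$ and $B+D=N_z$.'' This is exactly what the paper's Lemma~\ref{lemma: boundary implies interior} and the averaging argument of Lemma~\ref{lemma: transversality interiro implies boundary} accomplish, but you make the one-dimensional normal bookkeeping visible rather than routing through intermediate lemmas, and you also verify the converse and the cleanness claims explicitly rather than leaving them implicit. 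One point you flag as a potential obstacle — that $\mathscr D_\cP{p}$ and $\mathscr A_\cP{q}$ meet $M^G$ cleanly and that the slices are the descending/ascending manifolds of $f|_{M^G}$ — is indeed needed, but is handled exactly as you suggest: it holds near the critical point by the equivariant Morse model and is propagated by the flow, which preserves $M^G$ and the (un)stable manifolds. Both arguments are sound; yours trades brevity for self-containment.
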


\begin{remark}
    This is the condition introduced in Definition 2.4.2 \cite{kronheimer2007Monopoles} for manifolds with boundary.
\end{remark}

\begin{proof}
    Suppose $(f,g)$ is stably Morse-Smale and $\mathscr D = \mathscr D_{\cP{p}}$ intersects $\mathscr A = \mathscr A_{\cP{q}}$ non-transversely. Then there exists $z \in \mathscr D \cap \mathscr A$ such that $T_z \mathscr D + T_z \mathscr A \neq T_z M$. Hence, $\stab(z) = G$. This implies $\cP{p}, \cP{q} \in M^G$ and $T_z \mathscr D + T_z \mathscr A = T_z M^G$. 
    By Lemma~\ref{lemma: boundary implies interior}, we get $\cP{q}$ is unstable. 
    Since $G$ is generated by a reflection, a vector $n \in T_\cP{p} M$ such that $n \perp T_\cP{p} M^G$ is an eigenvector of the Hessian of $f$ at $\cP{p}$. Thus, $n$ is either tangent to the ascending manifold of $\cP{p}$ or the descending manifold of $\cP{p}$. Replacing $f$ by $-f$ swaps descending manifolds and ascending manifolds.
    Suppose $\cP{p}$ is unstable, i.e., $n$ is tangent to $\mathscr D$.  With respect to $-f$, $\cP{p}$ is stable. We apply Lemma~\ref{lemma: boundary implies interior} for $-f$, and we get $\mathscr D$ and $\mathscr A$ intersect transversely in $M$, contradicting the non-transverse assumption. Therefore, $\cP{p}$ is stable. The other direction is obvious.
\end{proof}

\begin{lemma}\label{lemma: transversality interiro implies boundary}
    Suppose $G$ is generated by a reflection. \cbu If the pair $(f,g)$ is stably Morse-Smale, \cb the pair $(f,g)|_{M^G}$ is Morse-Smale.
\end{lemma}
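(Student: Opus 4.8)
The plan is to reduce the statement to a short linear‑algebra computation at each intersection point, using only that $\phi$ fixes the critical points involved and preserves the negative gradient flow.

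\emph{Step 1 (identify the relevant objects on $M^G$).} First I would record that, $f$ and $g$ being $G$‑equivariant, the vector field $\op{grad}_{f,g}$ is $G$‑equivariant, hence tangent to $M^G$ along $M^G$, and $(\op{grad}_{f,g})|_{M^G}=\op{grad}_{f|_{M^G},\,g|_{M^G}}$. Consequently the negative gradient flow of $(f,g)$ preserves $M^G$ and restricts there to the flow of $(f|_{M^G},g|_{M^G})$; in particular, for $\cP{p}\in\crit(f)\cap M^G=\crit(f|_{M^G})$, the set $\D_{\cP{p}}\cap M^G$ is exactly the descending manifold of $\cP{p}$ computed in $M^G$, and likewise for ascending manifolds. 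I would also note, once and for all, that $f|_{M^G}$ is Morse: the $\phi$‑invariant bilinear form $\hess{f}(\cP{p})$ is block‑diagonal with respect to the eigenspace splitting $T_{\cP{p}}M=T_{\cP{p}}M^G\oplus V_-$ of the involution $d\phi_{\cP{p}}$ (with $V_-$ the $(-1)$‑eigenspace, the off‑diagonal block vanishing for sign reasons), so nondegeneracy on $T_{\cP{p}}M$ forces nondegeneracy on $T_{\cP{p}}M^G$. Thus ``$(f,g)|_{M^G}$ is Morse-Smale'' means precisely: for all $\cP{p},\cP{q}\in\crit(f)\cap M^G$ and all $z\in\D_{\cP{p}}\cap\A_{\cP{q}}\cap M^G$,
\[
T_z(\D_{\cP{p}}\cap M^G)+T_z(\A_{\cP{q}}\cap M^G)=T_zM^G .
\]

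\emph{Step 2 (invariance and the clean-intersection inequality).} Fix such $\cP{p},\cP{q},z$ and write $\D=\D_{\cP{p}}$, $\A=\A_{\cP{q}}$. Since $\phi$ fixes $\cP{p}$ and $\cP{q}$ and carries negative gradient trajectories to negative gradient trajectories, $\D$ and $\A$ are $\phi$‑invariant submanifolds; hence $T_z\D$ and $T_z\A$ are $d\phi_z$‑invariant subspaces of $T_zM$, and $\D\cap M^G$, $\A\cap M^G$ are the fixed loci of the involutions $\phi|_{\D}$, $\phi|_{\A}$, so by Lemma~\ref{lemma: equivariant of tangent is the tangent of equivariant} applied to these submanifolds, $T_z(\D\cap M^G)=T_z\D\cap T_zM^G$ and $T_z(\A\cap M^G)=T_z\A\cap T_zM^G$. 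Since $z\in M^G$ we have $\stab(z)=G$, so Definition~\ref{def: stably morse smale} gives
\[
T_z\D+T_z\A\ \supseteq\ T_zM^{\stab(z)}=T_zM^G .
\]

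\emph{Step 3 (project onto the fixed locus).} Decompose $T_zM=V_+\oplus V_-$ into the $(\pm1)$‑eigenspaces of $d\phi_z$, so $V_+=T_zM^G$ (here $V_-$ is a line, as $\phi$ is a reflection, though that is not needed). Any $d\phi_z$‑invariant subspace $W\subseteq T_zM$ splits as $W=(W\cap V_+)\oplus(W\cap V_-)$, so the projection $\pi\colon T_zM\to V_+$ along $V_-$ satisfies $\pi(W)=W\cap V_+$. Applying $\pi$ to the inclusion of Step 2 and using $\pi(V_+)=V_+$ gives
\[
V_+\ \subseteq\ \pi(T_z\D)+\pi(T_z\A)\ =\ (T_z\D\cap V_+)+(T_z\A\cap V_+)\ \subseteq\ V_+,
\]
i.e. $T_z(\D\cap M^G)+T_z(\A\cap M^G)=T_zM^G$. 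Since $z$ was an arbitrary point of an arbitrary such intersection, $(f,g)|_{M^G}$ is Morse-Smale.

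I do not expect a genuine obstacle here. The only mildly delicate points are the identification in Step 1 of $\D_{\cP{p}}\cap M^G$ with the descending manifold of $f|_{M^G}$ and the verification that $f|_{M^G}$ is Morse; Step 3 is pure linear algebra. It is worth remarking that the argument uses only the inequality in Definition~\ref{def: stably morse smale} together with the $\phi$‑invariance of $\D$ and $\A$ — the clean‑intersection clause of that definition plays no role in this particular lemma.
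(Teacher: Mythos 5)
Your argument is correct and is essentially the paper's: the projection $\pi$ onto $V_+ = T_z M^G$ along $V_-$ is exactly the averaging operator $\tfrac12(\mathrm{id}+d\phi_z)$ that the paper applies to the decomposition $w=u+v$ produced by Definition~\ref{def: stably morse smale} (with $\stab(z)=G$), and the identification $\pi(W)=W\cap V_+$ for a $d\phi_z$-invariant $W$ is precisely why the averaged vectors land in $T_z(\D\cap M^G)$ and $T_z(\A\cap M^G)$. Your Steps 1 and 2 merely make explicit some identifications the paper leaves implicit, and you correctly observe that the paper's initial reduction to the case ($\cP{p}$ unstable, $\cP{q}$ stable) is not actually needed for the averaging argument.
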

\begin{proof}
    We only need to show that when critical points $\cP{p},\cP{q}\in M^G$ and when $\cP{p}$ is unstable and $\cP{q}$ is stable, then $(\mathscr D_\cP{p} \cap M^G) \pitchfork_{M^G} (\mathscr A_\cP{q} \cap M^G)$. For any $z \in \mathscr D_\cP{p} \cap  \mathscr A_\cP{q} \cap M^G$ and $w \in T_z(\mathscr D_\cP{p} \cap \mathscr A_\cP{q} \cap M^G)$, there exist $u \in T_z \mathscr D_\cP{p} $ and $v \in T_z \mathscr A_{\cP{q}}$ such that $u + v = w.$
    By averaging both sides, we complete the proof.
\end{proof}

\begin{theorem}\label{thm: stably metric}
    Let $k \geq 1$ be an integer. Let $f$ be an equivariant Morse function of class $C^{k+1}$. Suppose that $G$ is generated by a reflection of $M$. For a generic equivariant metric $g$ of class $C^k$, the pair $(f,g)$ is stably Morse-Smale.   
\end{theorem}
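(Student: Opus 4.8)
The plan is to reduce the theorem, via the three-part criterion for stably Morse-Smale pairs established above for reflection groups, to two genericity statements --- one about the gradient dynamics on the fixed locus $M^G$, the other about trajectories disjoint from $M^G$ --- and then to intersect the resulting residual sets of equivariant metrics. The basic structural observation is that for an equivariant metric $g$ the vector field $\op{grad}_{f,g}$ is $G$-equivariant, hence tangent to $M^G$ along $M^G$; thus $M^G$ and $M\setminus M^G$ are each invariant under the negative gradient flow, and every gradient trajectory lies entirely inside $M^G$ or entirely inside $M\setminus M^G$. It therefore suffices to produce a residual set of equivariant $C^k$ metrics $g$ with two properties: (A) the restricted pair $(f|_{M^G},g|_{M^G})$ is Morse-Smale on the closed manifold $M^G$; and (B) for all $\cP{p},\cP{q}\in\crit(f)$, the descending manifold $\mathscr D_\cP{p}$ meets the ascending manifold $\mathscr A_\cP{q}$ transversely at every point of $(\mathscr D_\cP{p}\cap\mathscr A_\cP{q})\setminus M^G$. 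Granting (A) and (B): if $\mathscr D_\cP{p}$ meets $\mathscr A_\cP{q}$ non-transversely at $z$, then $z\in M^G$ by (B), the trajectory through $z$ is pointwise fixed, so $\cP{p},\cP{q}\in M^G$ --- this is condition (1); since $M^G$ is flow-invariant, $\mathscr D_\cP{p}\cap M^G$ and $\mathscr A_\cP{q}\cap M^G$ are the descending and ascending manifolds of $f|_{M^G}$, so (A) gives condition (3); and Lemma~\ref{lemma: boundary implies interior}, applied to $f$ and to $-f$ --- using that for a reflection a point $\cP{p}\in M^G$ is stable for $-f$ precisely when it is unstable for $f$, and that $(-f)|_{M^G}$ is again Morse-Smale --- forces $\cP{q}$ unstable and $\cP{p}$ stable, which is condition (2). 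The criterion then certifies that $(f,g)$ is stably Morse-Smale.

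For property (A): $f|_{M^G}$ is a Morse function on the closed manifold $M^G$, with critical set $\crit(f)\cap M^G$ and nondegeneracy inherited from $M$ through the $\phi$-invariant splitting of the Hessian. By the classical Smale transversality theorem in its finite-regularity form, the set $R$ of $C^k$ metrics on $M^G$ making $(f|_{M^G},\cdot)$ Morse-Smale is residual. I then transport $R$ upward along the restriction map $\rho\colon g\mapsto g|_{M^G}$ from equivariant $C^k$ metrics on $M$ to $C^k$ metrics on $M^G$. This map is continuous and surjective: a metric on $M^G$ extends to $M$, and averaging over $G$ keeps the restriction fixed, since $(\phi^{*}g_0)_z(v,w)=g_0(v,w)$ whenever $z\in M^G$ and $v,w\in T_zM^G$. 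The same local extend-and-average argument shows $\rho^{-1}(R)$ is dense --- modify a given $g$ only in a neighborhood of $M^G$, by an arbitrarily small equivariant perturbation, so that its restriction lands in $R$. Being also a $G_\delta$ by continuity of $\rho$, the set $\rho^{-1}(R)$ is residual.

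For property (B): I run the standard Sard-Smale transversality argument for gradient trajectories, the one new point being that near the fixed-point-free locus the equivariance constraint is vacuous. Fix $\cP{p},\cP{q}$; let $\mathcal B'$ be the open subset of the usual Banach manifold of $W^{1,p}$ paths asymptotic to $\cP{p}$ and $\cP{q}$ consisting of paths whose image is not contained in $M^G$, and let $\mathcal F(\gamma,g)=\nabla_t\gamma+\op{grad}_{f,g}(\gamma)$, a section of the associated $L^p$-bundle over $\mathcal B'$ times the space of equivariant $C^k$ metrics. Every zero of $\mathcal F$ is a trajectory $\gamma$ lying in $M\setminus M^G$, and along a non-constant one every point $\gamma(t_0)$ is an injective, non-critical point outside $M^G$; pick a ball $B\ni\gamma(t_0)$ with $B\cap\phi(B)=\varnothing$. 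A symmetric $2$-tensor variation $\dot g$ supported in $B$ is made $G$-equivariant by adding $\phi_{*}\dot g$, supported in the disjoint set $\phi(B)$, without any change near $\gamma(t_0)$. Since $\tfrac{d}{ds}\op{grad}_{f,g+s\dot g}=-g^{-1}\dot g(\op{grad}_{f,g})$ and $\op{grad}_{f,g}(\gamma(t_0))\neq 0$, such variations realize an arbitrary perturbation of the vector field near $\gamma(t_0)$; pairing against an element of $\op{coker}D_\gamma$ --- which, solving the adjoint ODE, vanishes nowhere unless identically zero --- shows $D\mathcal F$ is onto there. Hence $\mathcal F^{-1}(0)$ is a Banach manifold, its projection to the space of equivariant $C^k$ metrics is Fredholm, and Sard-Smale yields a residual set of $g$ for which $\mathscr D_\cP{p}$ and $\mathscr A_\cP{q}$ intersect transversely along every non-fixed trajectory. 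Intersecting over the finitely many pairs $(\cP{p},\cP{q})$ and with $\rho^{-1}(R)$ produces the required residual set.

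The crux --- what makes this more than a routine transversality argument --- is the conceptual point that, because $(f,g)$ cannot in general be made Morse-Smale, one must first isolate exactly which failures of transversality are forced, namely those along trajectories pointwise fixed by $\phi$, and then check that everything else can be removed by an equivariant perturbation. The reduction to (A) and (B) is designed precisely so that this works: (A) is classical, carried out on the smaller manifold $M^G$; and in (B) every perturbation needed is localized off $M^G$, where equivariance imposes no constraint at all. The remaining ingredients --- the surjectivity computation feeding Sard-Smale, the $C^k/C^{k+1}$ regularity bookkeeping, and the transfer of residuality from $M^G$ to $M$ through $\rho$ --- are routine.
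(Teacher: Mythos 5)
Your proof is correct, and it follows the same overall strategy as the paper's---first arranging $(f|_{M^G},g|_{M^G})$ to be Morse-Smale by an extend-and-average argument, then killing non-transversality off the fixed locus by a perturbation supported away from $M^G$, and finally invoking the three-part criterion together with Lemma~\ref{lemma: boundary implies interior} applied to $f$ and $-f$. The one place you genuinely diverge is in the second perturbation: the paper perturbs $g$ only near critical points lying outside $M^G$, which by design only handles pairs $\cP{p},\cP{q}$ with at least one of them off $M^G$; it then needs a separate argument (the normal eigenvector at $\cP{q}$ plus flow-invariance of transversality) to deal with trajectories between two critical points in $M^G$ that nonetheless leave $M^G$. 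Your property (B), proved by a full Sard--Smale argument with perturbations localized near injective interior points of a trajectory, is strictly stronger: it yields transversality at \emph{every} intersection point outside $M^G$ regardless of where $\cP{p}$ and $\cP{q}$ sit, so this last case is handled automatically and the flow-invariance trick becomes unnecessary. This is a worthwhile simplification of the case analysis, bought at the cost of running the heavier Sard--Smale machinery in place of the lighter ``perturb near exterior critical points'' technique the paper borrows from Theorem~1.8 of \cite{bao2024morse}. One small detail you should be explicit about in the Sard--Smale step: when you symmetrize $\dot g$ by adding $\phi_*\dot g$, you need the support $B$ small enough that the trajectory $\gamma$ does not also pass through $\phi(B)$; this holds because the finite-time images $\gamma(\mathbb R)$ and $\phi(\gamma(\mathbb R))$ are disjoint (otherwise $\gamma$ and $\phi\circ\gamma$ would coincide, forcing $\gamma\subset M^G$), but it is worth saying.
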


\begin{proof}
    Recall that the stably Morse-Smale condition is an open condition with respect to the space of equivariant metrics. Then it suffices to show that the subspace of the equivariant metrics $g$ such that $(f,g)$ is stably Morse-Smale is dense. 

    Given any equivariant $g$, we first choose a metric $g'$ on $M^G$ such that $g'$ is $C^k$-close to $g|_{M^G}$ and the pair $(f|_{M^G}, g')$ is Morse-Smale. We then extend $g' - g$ to a small neighborhood of $M^G$ trivially, multiply it by a bump function, and then average it via $G$. This gives a small equivariant symmetric tensor $\delta$ on $M$ supported in a small neighborhood of $M^G$. We define a metric $g'$ on $M$ by $g' = g + \delta$.

    Next, around each critical point outside $M^G$, we perturb the metric $g'$ into an equivariant $g''$ such that $\mathscr D_{\cP{p}}$ intersects $\mathscr A_{\cP{q}}$ transversely in $M$ if at least one of $\cP{p}$ and $\cP{q}$ is outside $M^G$. The details of this perturbation can be found in the proof of Theorem 1.8 in \cite{bao2024morse}. The perturbation is supported away from $M^G$, so $(f, g'')|_{M^G}$ is still Morse-Smale.

    Now, for the case $\cP{p}, \cP{q} \in \crit(f)$ where both $\cP{p}$ and $\cP{q}$ are stable, by Lemma~\ref{lemma: boundary implies interior}, we have $\mathscr D_\cP{p}$ intersects $\mathscr A_\cP{q}$ transversely. The case when $\cP{p}, \cP{q} \in \crit(f)$ and both $\cP{p}$ and $\cP{q}$ are unstable can be handled similarly by replacing $f$ with $-f$.

    The only remaining case is when $\cP{p}$ is unstable, $\cP{q}$ is stable, \cbu and $p,q \in M^G$. \cb Let $n \in T_\cP{q} M$ be a tangent vector perpendicular to $M^G$. Then $n \in T_\cP{q} \mathscr A_\cP{q}$. Note that for any $z \in \mathscr D_\cP{p} \cap \mathscr A_\cP{q} \cap M^G$ we have $T_z \mathscr D_\cP{p} + T_z \mathscr A_\cP{q} \supset T_z M^G$. But since $n \in T_\cP{q} \mathscr A_\cP{q}$, we have for all $z \in \mathscr D_\cP{p} \cap \mathscr A_\cP{q}$ close to $\cP{q}$, 
    \begin{equation}\label{eqn: cherry}
        T_z \mathscr D_\cP{p} + T_z \mathscr A_\cP{q} = T_z M.
    \end{equation} 
    Since the transversality property is preserved by the gradient flow, Equation~\ref{eqn: cherry} holds for all $z \in \mathscr D_\cP{p} \cap \mathscr A_\cP{q}$. 
\end{proof}

\subsection{Stabilization}
Given any equivariant Morse function $f$ and any $\epsilon > 0$, \cbu by Theorem 1.7 of \cite{bao2024morse} or Theorem 1.7 of \cb \cite{mayer1989Ginvariant} there exists an equivariant stable Morse function $f^\epsilon$ such that $|| f - f^\epsilon ||_{C^0} < \epsilon$. In the case of a reflection action, $f^\epsilon$ is constructed by perturbing $f$ near unstable critical points such that: 
\be 
    \item Each unstable critical point $\cP{p}$ of $f$ remains a critical point of $f^\epsilon$ with $\ind_{f^\epsilon} (\cP{p}) = \ind_f(\cP{p}) - 1$.
    \item In a small neighborhood of each unstable critical point $\cP{p}$ of $f$, there exist two additional critical points $\cP{p}_+$ and $\cP{p}_-$ of $f^\epsilon$ on opposite sides of $M^G$, satisfying $\ind_{f^\epsilon} (\cP{p}_\pm) = \ind_f(\cP{p})$.
\ee 

We ask the following question:
\begin{question}\label{question: name of stably ms}
    Suppose $G$ is generated by a reflection.
    For a stably Morse-Smale pair $(f,g)$, if $\epsilon$ is sufficiently small, is $(f^\epsilon, g)$ Morse-Smale?
\end{question}

\section{Morse homology for manifolds with boundary}
\cbu In this section, we review Kronheimer and Mrowka's definition of Morse homology for manifolds with boundary, and then we give an alternative definition, which is geometric and also leads to a geometric description of Kronheimer and Mrowka's definition. This section can be viewed as a special case of Section~\ref{section: equivariant chain complex}. \cb

Let $N$ be a manifold with boundary $\p N$. 
In this section, we first review the definition of chain complexes for $N$ in \cite{kronheimer2007Monopoles} and give an alternative definition of chain complexes for manifolds with boundary. We also show that the two chain complexes are quasi-isomorphic.

Given $N$, we can construct its double $M = N \cup_{\p N} \overline{N}$. Let $G = \langle \phi \rangle$ be the group generated by the obvious reflection $\phi: M \to M$. Then $M$ is a smooth manifold with a $G$-action. We consider a $G$-equivariant Morse function $f: M \to \R$ and a $G$-equivariant metric $g$ on $M$. We write $f_N$ to mean $f\vert_{N}.$

Let $(f,g)$ be a stably Morse-Smale pair for $M$ with respect to the $G$-action. 
\begin{definition}
    A critical point $\cP{p}$ of $f_N$ is said to be: 
    \be
        \item an \emph{interior critical point} if $\cP{p}$ lies in the interior $\op{int}(N)$ of $N$ and we denote this by $\cP{p} \in \crit^o(f_N)$;
        \item a \emph{boundary critical point} if $\cP{p} \in \p N$;
        \item a \emph{stable (unstable) boundary critical point} if $\cP{p} \in \p N$ and is stable (unstable) and we denote this by $\cP{p} \in \crit^s(f_N)$ ($\cP{p} \in \crit^u(f_N)$).
    \ee 
\end{definition}
From this definition, we have that the critical point set $\crit(f_N)$ of $f_N$ is given by $\crit^o(f_N) \coprod \crit^s(f_N) \coprod \crit^u(f_N)$.

\subsection{Orientation of moduli spaces}\label{section: orientation of moduli spaces}
To orient the moduli spaces, we orient the descending manifolds of all critical points. 
First, we orient the descending manifolds in $\p N$ for the pair $(f_{\p N},g_{\p N})$.
Then, we orient descending manifolds in $M$:
\be 
    \item For any critical point $\cP{p} \in \crit^o(f_N)$, we choose an arbitrary orientation of $\mathscr D_\cP{p} $.
    \item For any critical point $\cP{p} \in \crit^s(f_N)$, we orient $\mathscr D_\cP{p} $ using the orientation chosen in \cbu $\partial N$. \cb
    \item For any critical point $\cP{p} \in \crit^u(f_N)$, we choose the orientation of $T_\cP{p} \mathscr D_\cP{p} $ such that the isomorphism $T_\cP{p} \mathscr D_\cP{p} \simeq \R\langle \mathfrak o_\cP{p} \rangle \oplus T_\cP{p} (\mathscr D_\cP{p} \cap \p N)$ is orientation-preserving, where $\mathfrak o_{\cP{p}}$ is the outward-pointing normal to $\p N$, and the orientation of $T_\cP{p} (\mathscr D_\cP{p} \cap \p N)$ is chosen as above.
\ee 
With these choices, for each rigid trajectory that appears below, we can assign $\pm 1$.
\cbu Note that the same orientation is adapted in Section~\ref{section: equivariant chain complex} except that there is no canonical choice of $\mathfrak o_\cP{p}$. \cb

\subsection{Kronheimer and Mrowka's chain complexes}\label{section: km}
Let $R$ be a commutative ring.
For any integer $k \geq 0$, and for $\dagger \in \{o, s, u\}$, let $C_k^\dagger$ be the free $R$-module generated by the critical points $\crit^\dagger(f_N)$ with $\ind_{f_N}(\cP{p}) = k$, and let $C^\dagger = \oplus_k C_k^\dagger$.

For all $\star, \dagger \in \{o, s, u\}$ except for the combination $(\star, \dagger) = (s, u)$, let $$\partial_\dagger^\star: C^\star \to C^\dagger$$ be the linear map defined by counting rigid Morse trajectories inside $M$.
Similarly, let $(\bar{C}, \bar{\p})$ be the Thom-Smale-Witten complex of $f_{\p N}$, graded by the Morse index of $f_{\p N}$.

For $\star, \dagger \in \{s,u\}$, let $$\bar\partial_\dagger^\star: C^\star \to C^\dagger$$ be the linear map defined by counting rigid Morse trajectories inside $\p N$. With respect to the splitting $\bar C = C^s \oplus C^u[-1]$, we write 
\[
\bar\p = 
\begin{bmatrix}
     \bar\partial_s^s &  \bar \partial_s^u  \\
    \bar \partial_u^s & \bar \partial_u^u \\
\end{bmatrix},
\] \cbu 
where the grading of $C^u[-1]$ is defined by $|\cP{q}| = \ind_{f_N} (\cP{q}) - 1$, for any $\cP{q} \in \op{Crit}^u(f_N)$. \cb

Kronheimer and Mrowka defined the two chain complexes $(\check{C}, \check \p)$ and $(\hat{C}, \hat \p)$  as follows:
\[
\check C = C^o \oplus C^s, 
\]
and 
\begin{equation*}
    \check{\partial} =
    \begin{bmatrix}
        \partial_{o}^{o} & -\partial_{o}^{u} \bar{\partial}_{u}^{s} \\
        \partial_{s}^{o} & \bar{\partial}_{s}^{s}-\partial_{s}^{u} \bar{\partial}_{u}^{s}
    \end{bmatrix};
\end{equation*} 
\[
    \hat C = C^o \oplus C^u,
\]
and 
\begin{equation*}
        \hat{\partial}  =
        \begin{bmatrix}
            \partial_{o}^{o} & \partial_{o}^{u} \\
            -\bar{\partial}_{u}^{s} \partial_{s}^{o} & -\bar{\partial}_{u}^{u}-\bar{\partial}_{u}^{s} \partial_{s}^{u}
        \end{bmatrix}.
\end{equation*}

\begin{theorem}[\cbu Theorem 2.4.5 of \cb \cite{kronheimer2007Monopoles}]
    The operators $\check{\partial}$ and $\hat{\partial}$ both square to zero. The complexes $(\check{C}, \check{\partial})$ and $(\hat{C}, \hat{\partial})$ compute the absolute and relative homology groups, respectively, with the isomorphisms
    $$
    \begin{aligned}
    & H_{k}(\check{C}, \check{\partial}) \simeq H_{k}(N) \\
    & H_{k}(\hat{C}, \hat{\partial}) \simeq H_{k}(N, \p N).
    \end{aligned}
    $$ 
\end{theorem}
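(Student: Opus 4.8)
The plan is to deduce the two identities $\check\partial^{\,2}=0$ and $\hat\partial^{\,2}=0$ from the codimension-one boundary structure of compactified one-dimensional moduli spaces of gradient trajectories in $M$, and to deduce the homology identifications either from \cite[\S 2.4]{kronheimer2007Monopoles} directly or by reduction to classical Morse theory on a manifold with boundary. First I would record the facts that make every operator in the statement well defined. By the lemma characterizing stably Morse--Smale pairs for a reflection, for any $\cP p,\cP q\in\crit(f)$ the descending manifold $\mathscr D_{\cP p}$ meets the ascending manifold $\mathscr A_{\cP q}$ transversely in $M$ unless $\cP p$ is stable, $\cP q$ is unstable, and both lie in $M^G=\partial N$; and by Lemma~\ref{lemma: transversality interiro implies boundary} the restricted pair $(f,g)|_{\partial N}$ is Morse--Smale, so $(\bar C,\bar\partial)$ is an honest Thom--Smale--Witten complex with $\bar\partial^2=0$ and $H_*(\bar C,\bar\partial)\simeq H_*(\partial N)$. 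Consequently, for $(\star,\dagger)\neq(s,u)$ the moduli space of unparametrized trajectories in $M$ from $\crit^\star(f_N)$ to $\crit^\dagger(f_N)$ is cut out transversely of the expected dimension, so each $\partial^\star_\dagger$ and $\bar\partial^\star_\dagger$ is a signed count of its rigid elements with the signs fixed by the conventions of \S\ref{section: orientation of moduli spaces}. I would also isolate two geometric facts used throughout: (i) $\partial N=M^G$ is invariant under the gradient flow, so each trajectory lies entirely in $\partial N$ or entirely in $M\smallsetminus\partial N$; and (ii) $\mathscr D_{\cP r}\subseteq\partial N$ for a stable critical point $\cP r$, while $\mathscr A_{\cP r}\subseteq\partial N$ for an unstable one, so a trajectory issuing from a stable critical point, or converging to an unstable one, is forced into $\partial N$ (in particular $\partial_o^s=0=\partial_u^o$, and a trajectory in $M$ joining two boundary critical points of the same type already lies in $\partial N$).

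To prove $\check\partial^{\,2}=0$ and $\hat\partial^{\,2}=0$, I would expand each operator in its $2\times2$ block form, so that each entry of the square becomes a signed sum of length-two compositions of the basic operators together with length-three compositions containing the factor $\bar\partial_u^s$, and then identify each such sum with the signed count of the boundary points of a compact one-manifold. For the terms built from trajectories in $M$ this one-manifold is $\overline{\mathcal M_M(\cP p,\cP q)}$ for the appropriate $\cP p,\cP q$ with index difference two; for the purely $\bar\partial$-terms it is $\overline{\mathcal M_{\partial N}(\cP p,\cP q)}$, whose boundary relation is $\bar\partial^2=0$ and is used to simplify. By facts (i)--(ii), the codimension-one boundary of $\overline{\mathcal M_M(\cP p,\cP q)}$ consists of: (a) ordinary once-broken trajectories through a single rigid intermediate critical point of the compatible type, accounting for the length-two compositions; and (b) \emph{hybrid} twice-broken trajectories $\cP p\to\cP r_1\to\cP r_2\to\cP q$ with $\cP r_1\in\crit^s(f_N)$, $\cP r_2\in\crit^u(f_N)$, whose middle leg is a rigid trajectory inside $\partial N$ (counted by $\bar\partial_u^s$) and whose two outer legs are rigid in $M$, accounting for the length-three compositions. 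The boundary-orientation identity for $\overline{\mathcal M_M(\cP p,\cP q)}$ then reads ``(length-two terms) $\pm$ (hybrid terms) $=0$'', and, summed over all pairs $\cP p,\cP q$ and combined with the $\bar\partial^2=0$ relations, this is exactly the vanishing of each block of $\check\partial^{\,2}$ and $\hat\partial^{\,2}$; the minus signs and the degree shift $C^u[-1]$ in the definitions of $\check\partial$ and $\hat\partial$ are precisely what is needed for the intrinsic signs to cancel.

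The step I expect to be the main obstacle is the justification of (b). A hybrid configuration is twice broken with all three of its legs rigid, so in a generic situation it would be a codimension-two corner rather than a codimension-one end; it is genuinely codimension one here only because the middle leg is trapped in the lower-dimensional invariant locus $\partial N=M^G$, along which the moduli space $\mathcal M_M(\cP r_1,\cP r_2)$ sits with a one-dimensional cokernel. Establishing that each such configuration is glued from exactly one end of $\overline{\mathcal M_M(\cP p,\cP q)}$, and pinning down the resulting sign, is an obstruction-bundle gluing computation in the sense of \cite{hutchings2009gluing}; because the obstruction bundle is here one-dimensional with an explicit section (built from the normal coordinate to $M^G$ at $\cP r_1$ and $\cP r_2$), the count reduces to a sign rather than an honest Euler-number computation --- this is the ``every broken trajectory glues'' phenomenon of \cite{kronheimer2007Monopoles}, and in our framework it is an instance of the well-definedness result proved in Section~\ref{section: equivariant chain complex}, which I would invoke rather than re-derive.

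Finally, for the homology identifications $H_k(\check C)\simeq H_k(N)$ and $H_k(\hat C)\simeq H_k(N,\partial N)$, I would either cite \cite[\S 2.4]{kronheimer2007Monopoles} or reduce to classical Morse theory on manifolds with boundary: gluing the broken $f$-trajectories counted above produces the honest Morse--Smale complex of a small stabilization $f^\epsilon$ of $f$ restricted to $N$ with the collar conventions of \cite{kronheimer2007Monopoles}, and for such a function on the manifold-with-boundary $N$ the sublevel-set filtration attaches one cell for each interior critical point and for each unstable boundary critical point and no cell for a stable boundary critical point, yielding a CW model for $N$ (respectively, relative to $\partial N$, for $(N,\partial N)$). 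Equivalently, one can invoke the exact triangle relating $(\check C,\check\partial)$, $(\hat C,\hat\partial)$ and $(\bar C,\bar\partial)$, together with $H_*(\bar C)\simeq H_*(\partial N)$, matched against the long exact sequence of the pair $(N,\partial N)$. This last part is classical homological bookkeeping; the only inputs special to the present situation are the chain-level descriptions obtained in the first three steps.
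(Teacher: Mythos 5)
The paper states this theorem as a citation to Kronheimer--Mrowka (Theorem 2.4.5 there) and does not supply its own proof, so there is no internal proof to compare against. Your reconstruction is nevertheless correct and is essentially the Kronheimer--Mrowka argument, which the present paper redevelops in more general form in Section~\ref{section: d square is zero} --- notably Theorem~\ref{thm: gluing end goal}, which the authors explicitly flag as a generalization of KM's Lemma~2.4.3. The structural inputs you isolate are all right: $\partial N = M^G$ is flow-invariant, so every trajectory is either entirely in $\partial N$ or entirely off it; $\mathscr D_{\cP{p}}\subset\partial N$ when $\cP{p}$ is stable and $\mathscr A_{\cP{q}}\subset\partial N$ when $\cP{q}$ is unstable, which forces $\partial_o^s=\partial_u^o=0$; and the only non-generic codimension-one breakings of an index-two moduli space are the hybrid $\crit^s\to\crit^u$ chains whose middle leg is trapped in $\partial N$. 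You also correctly pin down the one genuinely nontrivial step, namely that each such hybrid configuration is the limit of exactly one end. In the manifold-with-boundary case that count is a sign rather than a bona fide obstruction-bundle Euler number because both outer legs approach $\partial N$ from the $N$-side, so the pairing in the gluable condition (Definition~\ref{def: gluable}) has a definite sign; this is exactly the content of the remark preceding Theorem~\ref{thm: gluing end goal}, where the paper observes that in $N$ all simple broken trajectories are gluable. The homology identifications via cellular attaching along sublevel sets, or via matching the $(\bar C,\check C,\hat C)$ exact triangle to the long exact sequence of $(N,\partial N)$, are classical and fine. One small correction to your last technical remark: the gluing and well-definedness arguments you want to invoke live in Section~\ref{section: d square is zero}, not Section~\ref{section: equivariant chain complex}.
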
 

\subsection{An alternative definition via stabilization}
Now we define a larger chain complex $(\check{\mathbb C}_*,\check{\fatD})$ by 
\[
    \check{\mathbb C} =  C^o \oplus C^s \oplus C^u[-1] \oplus C^u_-,
\] 
where $C^u_{-}$ is generated by $\cP{p}_{-}$ for every $\cP{p} \in \crit^u(f\vert_N)$ and can be viewed as $C^u$. We define the differential
\[
\check{\fatD} = 
\begin{bmatrix}
    \partial_o^o & 0 & 0 & \partial_o^u\\
    \partial_s^o & \bar\partial_s^s &  \bar \partial_s^u & \partial_s^u \\
    0 & \bar \partial_u^s & \bar \partial_u^u  & \Pi_- \\
    -\bar \partial_u^s \partial_s^o & 0 & 0 & -\bar \partial_u^u - \bar \partial_u^s \partial_s^u
\end{bmatrix},
\]
where $\Pi_-: C^u_- \to C^u[-1]$ maps $\cP{p}_- \mapsto \cP{p}$.

\begin{remark}
    This definition is motivated by the stabilization process. Specifically, we modify the Morse function $f_N$ near each unstable critical point $\cP{p}$ to obtain a new Morse function $f^\epsilon_N$ such that an extra critical point $\cP{p}_-$ is created. This forms the last two summands in $\check{\mathbb C}$    with $\ind_{f^\epsilon_N}(\cP{p}) = \ind_{f_N}(\cP{p}) - 1$ and $\ind_{f^\epsilon_N}(\cP{p}_-) = \ind_{f_N}(\cP{p})$.
\end{remark}

\begin{proposition}\label{prop: fatD square is zero}
    $\check{\fatD}^2 = 0$.
\end{proposition}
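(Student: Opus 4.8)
To prove $\check{\fatD}^2=0$ the plan is to exhibit $(\check{\mathbb C},\check{\fatD})$ as the total complex of a short exact sequence assembled from the two Kronheimer--Mrowka complexes, so that the assertion reduces to the relations $\check\partial^2=\hat\partial^2=\bar\partial^2=0$ together with exactly one genuinely new ``connecting'' identity.

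First, I would observe that the middle two summands $\bar C=C^s\oplus C^u[-1]$ form a subcomplex of $\check{\mathbb C}$: reading off the second and third columns of $\check{\fatD}$, the restricted differential is precisely $\bar\partial$. The quotient complex $\check{\mathbb C}/\bar C$ is $C^o\oplus C^u_-$, whose induced differential equals, via the identification $\Pi_-\colon C^u_-\xrightarrow{\ \sim\ }C^u$, the operator $\hat\partial$ on $\hat C$. Hence, with respect to the splitting $\check{\mathbb C}=\bar C\oplus(C^o\oplus C^u_-)$,
\[
\check{\fatD}=\begin{bmatrix}\bar\partial&\delta\\ 0&\hat\partial\end{bmatrix},
\qquad
\delta=\begin{bmatrix}\partial_s^o&\partial_s^u\\ 0&\Pi_-\end{bmatrix}\colon\ C^o\oplus C^u_-\longrightarrow C^s\oplus C^u[-1],
\]
so that $\check{\fatD}^2=\begin{bmatrix}\bar\partial^2&\ \bar\partial\delta+\delta\hat\partial\\ 0&\hat\partial^2\end{bmatrix}$. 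Thus $\check{\fatD}^2=0$ is equivalent to the three statements $\bar\partial^2=0$, $\hat\partial^2=0$, and $\bar\partial\,\delta=-\delta\,\hat\partial$, i.e.\ that $\delta$ is a chain map up to the usual mapping-cone sign.

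The identities $\bar\partial^2=0$ and $\hat\partial^2=0$ are part of Theorem~2.4.5 of \cite{kronheimer2007Monopoles} ($\bar\partial^2=0$ being the ordinary Morse relation on $\partial N$). For $\bar\partial\delta+\delta\hat\partial=0$ I would expand into its four block components $C^o\oplus C^u_-\to C^s\oplus C^u[-1]$. The $C^o\to C^s$ component equals $\bar\partial_s^s\partial_s^o+\partial_s^o\partial_o^o-\partial_s^u\bar\partial_u^s\partial_s^o$, which is the $(C^s,C^o)$ entry of $\check\partial^2$ and so vanishes; the $C^o\to C^u[-1]$ and $C^u_-\to C^u[-1]$ components vanish formally, using only that $\bar\partial_u^s$, $\bar\partial_u^u$ and $\Pi_-$ commute past the canonical identification $C^u_-=C^u[-1]$. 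The only remaining component is $C^u_-\to C^s$, which after cancelling $\Pi_-$ reads
\[
\bar\partial_s^u\,\Pi_-\;+\;\bar\partial_s^s\,\partial_s^u\;+\;\partial_s^o\,\partial_o^u\;=\;\partial_s^u\bigl(\bar\partial_u^u+\bar\partial_u^s\,\partial_s^u\bigr).
\]
This identity is \emph{not} implied by $\check\partial^2=\hat\partial^2=\bar\partial^2=0$ (they only yield it after right composition with $\bar\partial_u^s$), so this is exactly where geometry enters, and it is the main obstacle.

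To establish the displayed identity I would argue, as in Section~5, by analysing the boundary of the compactified $1$--dimensional moduli space of broken negative gradient trajectories in $M$ from an unstable boundary critical point $\cP{q}$ to a stable boundary critical point $\cP{r}$ with $\ind_{f_N}(\cP{q})-\ind_{f_N}(\cP{r})=2$ --- precisely the obstructed situation allowed by the stably Morse--Smale condition, where $\mathscr D_\cP{q}$ and $\mathscr A_\cP{r}$ meet cleanly but not transversely along $M^G$. Its codimension-one strata match the terms above: trajectories lying entirely in $M^G$ contribute $\bar\partial_s^u\,\Pi_-$ (the short trajectory $\cP{q}_-\to\cP{q}$ created by stabilization, followed by a rigid $M^G$--trajectory); breakings at a stable boundary critical point with the broken piece inside $M^G$ contribute $\bar\partial_s^s\,\partial_s^u$; breakings at an interior critical point contribute $\partial_s^o\,\partial_o^u$; and the remaining strata of nearby trajectories that leave $M^G$ contribute $\partial_s^u(\bar\partial_u^u+\bar\partial_u^s\,\partial_s^u)$. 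Pinning down which of these configurations glue, and with which signs --- the obstruction bundle gluing of \cite{hutchings2009gluing} in the Morse setting, carried out in Section~5 --- yields the identity. (Equivalently, one may note that $\delta$ is, up to an overall sign, a chain-level representative of Kronheimer and Mrowka's map $\hat C\to\bar C[-1]$, whose being a chain map is part of the package of \cite{kronheimer2007Monopoles}.)
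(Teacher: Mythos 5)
Your reduction is correct and more structured than the paper's own argument. You observe that with respect to the filtration $\bar C\subset\check{\mathbb C}$, the differential $\check{\fatD}$ has the mapping-cone form $\begin{bmatrix}\bar\partial&\delta\\0&\hat\partial\end{bmatrix}$, so that $\check{\fatD}^2=0$ splits into $\bar\partial^2=0$, $\hat\partial^2=0$, and the anticommutation $\bar\partial\delta+\delta\hat\partial=0$; three of the four block entries of the latter vanish either formally or by invoking $\check\partial^2=0$, leaving exactly one identity $C^u_-\to C^s$ that carries genuine geometric content. I checked your bookkeeping: the $(C^s,C^o)$ component is indeed the $(C^s,C^o)$ entry of $\check\partial^2$, the two $C^u[-1]$-valued components cancel formally after unwinding $\Pi_-$, and the surviving identity is $\bar\partial_s^u+\bar\partial_s^s\partial_s^u+\partial_s^o\partial_o^u=\partial_s^u(\bar\partial_u^u+\bar\partial_u^s\partial_s^u)$, which, as you note, is strictly stronger than what falls out of $\check\partial^2=\hat\partial^2=\bar\partial^2=0$. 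By contrast, the paper does not isolate this identity: it proves the larger equivariant statement (Theorem~\ref{thm: bold d square is zero}) entry-by-entry by obstruction bundle gluing and then asserts that Proposition~\ref{prop: fatD square is zero} ``follows as a consequence.'' Your approach buys algebraic economy: after recognizing the filtration, $\check\partial^2=\hat\partial^2=0$ can be quoted from \cite{kronheimer2007Monopoles}, and only one moduli space needs to be analyzed.

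Two caveats. First, the one remaining identity still requires the gluing analysis of Section~\ref{section: d square is zero} — your sketch of the boundary strata of the index-$2$ moduli space from an unstable to a stable boundary point is the right picture, but the framing involving ``the short trajectory $\cP{q}_-\to\cP{q}$ created by stabilization'' is informal; a rigorous version would proceed as in Theorem~\ref{thm: gluing end goal} by classifying simple broken trajectories, checking gluability, and tracking signs. Second, your parenthetical claim that $\delta$ realizes Kronheimer--Mrowka's chain-level map $\hat C\to\bar C[-1]$, and that the needed identity therefore follows from their exact-triangle package, deserves scrutiny: Section~2.4 of \cite{kronheimer2007Monopoles} proves $\check\partial^2=\hat\partial^2=0$ but I do not believe they record the chain-level anticommutation of the connecting map in a form that can simply be cited. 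If that reference checks out, your proof becomes purely algebraic relative to \cite{kronheimer2007Monopoles}; if not, you still need the gluing theorem, and the practical saving over the paper's approach is that you need it for only one of the matrix entries.
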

The proof of this is postponed to Section~\ref{section: d square is zero}.

It is clear that $(\bar C, \bar \p)$ is a subcomplex of $\check{\mathbb C}$. The complex $(\hat{\mathbb C}, \hat{\fatD})$ is defined to be the quotient complex $(\check{\mathbb C},\check{\fatD})/(\bar C, \bar \p)$.

\begin{proposition}
    The chain complex $(\check{\mathbb C}, \check{\fatD})$ is quasi-isomorphic to $(\check C, \check{\p})$,
    and $(\hat{\mathbb C}, \hat{\fatD})$ is quasi-isomorphic to $(\hat C, \hat \p)$.
\end{proposition}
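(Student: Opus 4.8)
The plan is to exhibit explicit chain maps in each direction and check they are mutually inverse up to chain homotopy, or — more efficiently — to produce a single chain map in one direction and verify it is a quasi-isomorphism by a filtration/mapping-cone argument. I would focus on the $\check{}$ case; the $\hat{}$ case follows by passing to the quotient by $(\bar C,\bar\p)$, since the quasi-isomorphism $(\check{\mathbb C},\check{\fatD})\to(\check C,\check\p)$ should be arranged to be compatible with the inclusion of $(\bar C,\bar\p)$ (which is a subcomplex of $\check{\mathbb C}$ but maps to zero in $\check C$), so it descends to the quotients and the five-lemma on the long exact sequences of the pairs finishes that half.

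For the $\check{}$ case, observe that $\check{\mathbb C} = \check C \oplus \big(C^u[-1]\oplus C^u_-\big)$ as $R$-modules, and that with respect to this decomposition $\check{\fatD}$ is block upper/lower triangular in a way that makes the summand $\mathcal C' := C^u[-1]\oplus C^u_-$, with the induced differential
\[
\begin{bmatrix} \bar\partial_u^u & \Pi_- \\ 0 & -\bar\partial_u^u \end{bmatrix},
\]
into a subquotient whose differential is the mapping cone of the identity-like map $\Pi_-\colon C^u_-\to C^u[-1]$ twisted by $\bar\partial_u^u$ — concretely, after the change of basis $\cP{p}\mapsto\cP{p}$, $\cP{p}_-\mapsto \cP{p}_- $, this complex is acyclic because $\Pi_-$ is an isomorphism of graded modules and the differential on $\mathcal C'$ is exactly the cone differential of an isomorphism. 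Hence the projection $\check{\mathbb C}\to\check C$ killing $\mathcal C'$ is a chain map (one checks the off-diagonal blocks $\partial_o^u$, $\partial_s^u$ of $\check{\fatD}$ land in $\mathcal C'$'s complement appropriately and that the composite $-\partial_o^u\bar\partial_u^s$, $\bar\partial_s^s-\partial_s^u\bar\partial_u^s$ recovered on $\check C$ matches $\check\p$ — this is where the specific entries of $\check{\fatD}$ were designed to work) with acyclic kernel, hence a quasi-isomorphism. The cleanest way to organize this is: (i) verify $\mathcal C'$ is a subcomplex, with acyclic homology, using that $\Pi_-$ is bijective; (ii) verify the quotient complex $\check{\mathbb C}/\mathcal C'$ is isomorphic as a complex to $(\check C,\check\p)$ by directly comparing the two $2\times2$ matrices, using that in the quotient the column $(\partial_o^u,\partial_s^u,*,*)^T$ gets composed with the connecting identification $\cP{p}\leftrightarrow\cP{p}_-$ and produces exactly the $-\partial_o^u\bar\partial_u^s$ and $-\partial_s^u\bar\partial_u^s$ terms; (iii) conclude from the long exact sequence of $0\to\mathcal C'\to\check{\mathbb C}\to\check C\to0$.

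The main obstacle I anticipate is (ii): getting the signs and the direction of the connecting map right so that the induced differential on the quotient is literally $\check\p$ and not merely isomorphic to it by a nontrivial automorphism — in particular tracking how $\Pi_-$ being an isomorphism lets one eliminate the $C^u[-1]$ and $C^u_-$ summands simultaneously, and confirming the leftover terms $-\bar\partial_u^s\partial_s^o$ and $-\bar\partial_u^u-\bar\partial_u^s\partial_s^u$ in the bottom row of $\check{\fatD}$ are consistent with this elimination (they should be forced by $\check{\fatD}^2=0$, Proposition~\ref{prop: fatD square is zero}). A secondary but routine point is checking that the whole construction is natural enough in $(\bar C,\bar\p)$ to descend to the $\hat{}$ quotient; here one just notes $(\bar C,\bar\p)\subset\ker(\check{\mathbb C}\to\check C)$ is false in general — rather $\bar C$ maps onto $C^s\oplus C^u[-1]\subset\check C$ — so for the $\hat{}$ statement I would instead run the identical acyclic-subcomplex argument for $\hat{\mathbb C}=\check{\mathbb C}/\bar C$ directly, whose module decomposition is $\hat C\oplus C^u_-$ with $C^u_-$ again an acyclic subquotient via $\Pi_-$, and compare the resulting quotient differential to $\hat\p$.
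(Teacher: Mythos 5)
The central step of your proposal fails at the start: $\mathcal C' = C^u[-1]\oplus C^u_-$ is \emph{not} a subcomplex of $(\check{\mathbb C},\check{\fatD})$, and neither is $\check C = C^o\oplus C^s$. Reading off the matrix for $\check{\fatD}$, the column over $C^u[-1]$ has the nonzero entry $\bar\partial_s^u\colon C^u[-1]\to C^s$, and the column over $C^u_-$ has the nonzero entries $\partial_o^u\colon C^u_-\to C^o$ and $\partial_s^u\colon C^u_-\to C^s$; so $\check{\fatD}(\mathcal C')\not\subset\mathcal C'$. Symmetrically, the bottom-left block has $\bar\partial_u^s$ and $-\bar\partial_u^s\partial_s^o$, so $\check C$ is not a subcomplex either. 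Consequently the naive projection $\check{\mathbb C}\to\check C$ is not a chain map, there is no short exact sequence $0\to\mathcal C'\to\check{\mathbb C}\to\check C\to 0$ of complexes, and your steps (i)--(iii) do not get off the ground. (Two further local issues: the bottom-right block of $\check{\fatD}$ is $\begin{bmatrix}\bar\partial_u^u & \Pi_- \\ 0 & -\bar\partial_u^u - \bar\partial_u^s\partial_s^u\end{bmatrix}$, not the matrix you wrote; and that block by itself need not square to zero, so even if it were the induced differential you would still need to justify acyclicity rather than invoking the cone of an isomorphism. Your fallback for $\hat{\mathbb C}$ has the same problem: $C^u_-$ is not a subcomplex of $\hat{\mathbb C}$ because of the $\partial_o^u$ entry.)

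The paper repairs exactly this defect by replacing the naive projection with a ``tilted'' chain map
\[
\psi = \begin{bmatrix}
        \op{id} & 0 & -{\partial}^u_o & 0\\
        0 & \op{id} & -{\partial}^u_s & 0\\
    \end{bmatrix}\colon\check{\mathbb C}\to\check C,
\]
whose extra third column absorbs precisely the offending terms, so that $\psi$ \emph{is} a chain map (this is where the identities from $\check{\fatD}^2=0$ are used). Its kernel $\{(\partial_o^u x,\ \partial_s^u x,\ x,\ y)\}$ is the correctly twisted replacement for your $\mathcal C'$: it is a genuine subcomplex, and one checks directly that every cycle in it is a boundary. Then the long exact sequence of $0\to\ker\psi\to\check{\mathbb C}\to\check C\to 0$ gives the quasi-isomorphism. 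For $\hat{\phantom{C}}$, rather than an acyclic-subcomplex argument the paper produces a surjective chain map $\phi\colon\check{\mathbb C}\to\hat C$ with $\ker\phi=\bar C$, so the induced map $\hat{\mathbb C}=\check{\mathbb C}/\bar C\to\hat C$ is an isomorphism of complexes, not merely a quasi-isomorphism. If you want to retain the spirit of your argument, the right move is to first change basis on $\check{\mathbb C}$ by $(a,b,x,y)\mapsto(a-\partial_o^u x,\ b-\partial_s^u x,\ x,\ y)$ (this is what $\psi$ is implicitly doing), after which the upper-triangular structure you were hoping for does appear; but as written, the module-level splitting is not compatible with the differential.
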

\begin{proof}
    We first show that $(\check{\C}, \check \fatD)$ is quasi-isomorphic to $(\check C, \check \p)$.

    We define the map $\psi: (\check{\C}, \check \fatD) \to (\check C, \check \p)$
    by
    $$\psi = \begin{bmatrix}
        \op{id} & 0 & -{\partial}^u_o & 0\\
        0 & \op{id} & -{\partial}^u_s & 0\\        
    \end{bmatrix}.$$
    
It suffices to show the claim below.
\begin{claim}
    The map $\psi$ is a chain map.
\end{claim}
\begin{proof}
    This is a straightforward calculation using the identities in $\check{\fatD}^2 = 0.$
\end{proof}

Now we calculate the homology of $(\ker \psi, \check{\fatD}|_{\ker \psi})$.
It is easy to see that $$\ker \psi = \Biggl\{  \begin{bmatrix}
    \p^u_o x \\
    \p^u_s x \\
    x \\
    y
\end{bmatrix}
~\Bigg|~ x \in C^u[-1], y\in C^u_-
\Biggr\}.$$

Elements in $\ker \check{\fatD}|_{\ker \psi}$ are given by $\begin{bmatrix}
    \p^u_o x \\
    \p^u_s x \\
    x \\
    -(\bar\p^s_u \p^u_s + \bar\p^u_u) x
\end{bmatrix}$, for $x \in C^u[-1]$.
Using Proposition~\ref{prop: fatD square is zero} it is easy to see:
\[
\begin{bmatrix}
    \p^u_o x \\
    \p^u_s x \\
    x \\
    -(\bar\p^s_u \p^u_s + \bar\p^u_u) x
\end{bmatrix} = \check{\fatD} \begin{bmatrix}
        \p^u_o x \\
     \p^u_s x \\
    x \\
    (-\bar\p^s_u \p^u_s - \bar\p^u_u ) x + x_-
\end{bmatrix},
\]
and hence they are all exact, and the homology of $(\ker \psi, \ker \check{\fatD}|_{\ker \psi})$ vanishes.
The long exact sequence associated with the short exact sequence 
$$0 \to \ker \psi  \to \check{\C} \to \check{C} \to 0$$
implies that $\psi$ is a quasi-isomorphism.

Now we consider the map $\phi: \check{\C} \to \hat C$ defined by sending $C^o$ to $C^o$ via the identity map, $C^s \oplus C^u[-1]$ to $0$, and $C^u_-$ to $C^u[-1]$ via $\Pi_-$. 
Clearly, $\phi$ is surjective, and $\ker \phi = \bar C$. One can check that $\phi$ is a chain map. Therefore, $\phi$ induces an isomorphism from $(\hat{\C}, \hat{\fatD})$ to $(\hat C, \hat \p)$.
\end{proof}

\subsection{A geometric explanation}
In this section, we provide alternative descriptions of Kronheimer and Mrowka's chain complexes to complete the geometric picture. The arguments in this section rely on gluing results that will be developed in an upcoming paper, so the claims are conjectural.

Recall $N$ is a manifold with boundary.
Given a stably Morse-Smale pair $(f,g)$ on $N$, and a small $\epsilon > 0$, following \cite{bao2024morse}, we construct a stable Morse function $f^\epsilon$. 
Suppose the answer to Question~\ref{question: name of stably ms} is positive. Then the pair $(f^\epsilon, g)$ is Morse-Smale. 
We consider the Thom-Smale-Witten complex of $(f^\epsilon, g)$, which is conjectured to be quasi-isomorphic to the chain complex $(\check{\C}, \check{\fatD})$.

In this case, one can define a Thom-Smale-Witten complex $(C^o_{\epsilon}, \partial^o_\epsilon)$ using only the interior critical points of $f^\epsilon$. Note that since $f^\epsilon$ is stable, there can be no broken trajectories going from the interior to the boundary and returning. Therefore, we have $(\partial^o_\epsilon)^2 = 0$. We expect the homology of $(C^o_{\epsilon}, \partial^o_\epsilon)$ to be $H_*(N,\p N)$.
On the other hand, on the level of generators, for each critical point $\cP{p} \in \crit^s(f)$, it does not appear in $\crit^o(f^\epsilon)$; for each critical point $\cP{p} \in \crit^u(f)$, it corresponds to two critical points $\cP{p} \in \crit^s(f^\epsilon)$ and $\cP{p}_- \in \crit^o(f^\epsilon)$. The critical point $\cP{p}$ is not included in the generators of $C^o_\epsilon$, but $\cP{p}_-$ is. This gives intuition about the chain complex $(\hat C, \hat \p)$, noting that $\hat C =  C^o \oplus C^u$. To compare the differentials $\check \p$ and $\p_\epsilon^o$, one needs some gluing results which are yet to be developed.

To understand the chain complex $(\check{C}, \check{\p})$, we can construct a de-stabilization. We denote $f^{-\epsilon} \coloneqq -(-f)^\epsilon$, i.e., the opposite of the stabilization of $-f$.
This has the effect of preserving all the unstable critical points of $f$, while modifying $f$ near each stable critical point:
each stable critical point $\cP{p} \in \crit^s(f)$ corresponds to two critical points $\cP{p} \in \crit^u(f^{-\epsilon})$ and $\cP{p}_- \in \crit^o(f^{-\epsilon})$. We can construct a chain complex $(C^o_{-\epsilon}, \p^o_{-\epsilon})$ by using only the interior critical points of $f^{-\epsilon}$. It is clear that $(\p^o_{-\epsilon})^2 = 0$, since there does not exist a broken trajectory going from the interior to the boundary and returning. By a similar argument as before, we expect that $(C^o_{-\epsilon}, \p^o_{-\epsilon})$ is isomorphic to $(\check{C}, \check{\p})$, whose homology computes $H_*(N)$.

\section{Equivariant Morse homology}
\subsection{Equivariant Thom-Smale-Witten complex}\label{section: equivariant chain complex}
Let $M$ be a closed manifold, $\phi$ be a reflection on $M$, $G = \langle \phi \rangle$, and $(f,g)$ be a stably Morse-Smale pair. In this section we define an equivariant Thom-Smale-Witten complex for $(M, G)$.

Let $\mathcal N$ be the $1$-dimensional orthogonal complement of $T{M^G}$ in $TM|_{M^G}$. At any critical point $\cP{p}\in M^G$, it is easy to check that $\mathcal N_\cP{p}$ is an eigenspace of $\hess{f}(\cP{p})$.
We choose a unit-length vector $\mathfrak o_\cP{p} \in T_\cP{p} \mathcal N$ for the orientation of $\D_\cP{p}$. 

To orient the moduli spaces of trajectories, we orient all the descending manifolds in the same way as in Section~\ref{section: orientation of moduli spaces}, with the outward-pointing normal vector replaced by the unit vector $\mathfrak{o}_\cP{p}$ at each critical point $\cP{p}$. Note that the choice of orientation does not have to be $G$-equivariant.

Recall that we denote by $\crit^o(f)$ the set of interior critical points of $f$, $\crit^s(f)$ the set of stable critical points of $f$, and $\crit^u(f)$ the set of unstable critical points of $f$. For $\dagger \in \{o, s, u\}$, we write $C^\dagger$ for the chain complex generated by $\crit^\dagger(f)$ and graded by $\ind = \ind_f$, the Morse index of the critical point of $f$.
We define the chain complex
 \[
 \boldsymbol{C} = C^o \oplus C^s \oplus C^u[-1] \oplus C^u_+ \oplus C^u_-, 
 \]
where $C^u[-1]$ is as before and $\cP{p}_{\pm}$ are generators of $C^u_\pm$ for any $\cP{p}\in \crit^u(f)$.
These $\cP{p}_\pm$ correspond to the newly added critical points after stabilization, and the label $\pm$ is determined by the choice of $\mathfrak o_\cP{p}$.

 Next, we explain the differential $\boldsymbol{\p}$. We start by describing the maps for flowlines $\gamma \in \M{M}{p}{q}$ first.

Let $\cP{p}\in \crit^o(f)$, and $\cP{q}\in \crit^s(f)$ such that $\ind(\cP{p})-\ind(\cP{q}) = 1$. For any $\gamma \in \M{M}{p}{q}$, when $\vt$ is sufficiently large, we can write $\gamma(\vt) = \op{exp}_{\eta(\vt)} n(\vt)$, where $\eta(\vt) \in M^G$ and $n(\vt) \in \mathcal N_{\eta(\vt)}$, and $\op{exp}$ is the exponential map from a neighborhood of the zero section of $\mathcal N$ to $M$. Since $\op{grad}_{f}$ is tangent to $M^G$, we get $n(\vt) \neq 0$, and hence $$\gamma^\infty\coloneqq\langle \lim_{\vt\to \infty} n(\vt)/|n(\vt)|, \mathfrak o_\cP{q} \rangle \in \{1, -1\}.$$ 
Depending on the sign of $\gamma^\infty$, we write 
\[
\M{M}{p}{q} = \M{M}{p}{q_+} \coprod \M{M}{p}{q_-}
\] to be the disjoint union of flowlines from $\cP{p}$ that approach $\cP{q}$ from the $\gamma^\infty$ direction.
We define the linear maps $$\p^o_{s\pm}: C^o \to C^s$$ by counting elements of $\M{M}{p}{q_\pm}$.

In the same way, for any $\cP{p}\in \crit^u(f)$ and $\cP{q}\in \crit^o(f)$, such that $\ind(\cP{p})-\ind(\cP{q}) = 1$, 
we define 
$$\gamma^{-\infty}\coloneqq \langle \lim_{\vt\to -\infty} n(\vt)/|n(\vt)|, \mathfrak o_\cP{p} \rangle \neq 0.$$ 
Similarly, we write 
$$\M{M}{p}{q} = \M{M}{p_+}{q} \coprod \M{M}{p_-}{q}$$ to be the disjoint union of flowlines that start from the $\gamma^{-\infty}$ side of $\cP{p}$ to $\cP{q}.$
We define the linear maps $$\p_o^{u\pm}: C^u_\pm \to C^o$$ by counting elements of $\M{M}{p_\pm}{q}$.

Similarly, for $\cP{p}\in \crit^u(f)$ and $\cP{q}\in \crit^s(f)$ such that $\ind(\cP{p})-\ind(\cP{q}) = 1$, for any $\gamma \in \M{M}{p}{q}$, since $(f,g)$ is stably Morse-Smale, we have $\M{M^G}{p}{q}$ is transversely cut out by Lemma~\ref{lemma: transversality interiro implies boundary} and $\dim \M{M^G}{p}{q} = \ind(\cP{p}) -1 - \ind(\cP{q}) - 1 = -1$. Hence, $\M{M^G}{p}{q} = \emptyset$ and $\gamma$ is not contained in $M^G$. Then, by the same argument as above, we can define 
$$\gamma^{-\infty}\coloneqq \langle \lim_{\vt\to -\infty} n(\vt)/|n(\vt)|, \mathfrak o_\cP{p} \rangle \in \{1, -1\}$$ 
and 
$$\gamma^{\infty}\coloneqq \langle \lim_{\vt\to \infty} n(\vt)/|n(\vt)|, \mathfrak o_\cP{q} \rangle \in \{1, -1\}.$$
For $\diamond, \star \in \{+, -\}$, we define $\lm(M; \cP{p}_\diamond,\cP{q}_\star)$ analogously to the previous moduli spaces. If $\gamma^{-\infty}$ is $\diamond$ and $\gamma^\infty$ is $\star$, we also define four maps $$\p^{u\diamond}_{s\star}: C^u_{\diamond} \to C^s$$ 
by counting elements in $\lm(M; \cP{p}_\diamond,\cP{q}_\star)$.
We have 
\[
\M{M}{p}{q} = \coprod_{\diamond, \star \in \{+, -\}} \lm(M; \cP{p}_\diamond,\cP{q}_\star).
\]

Now we describe the flowlines $\gamma \in \M{M^G}{p}{q}$ in the fixed point set $M^G$.
For any $\cP{p} \in \crit^\dagger(f)$, $\cP{q} \in \crit^u(f)$ such that $\ind_{f_{M^G}}(\cP{p}) - \ind_{f_{M^G}}(\cP{q}) = 1$ and $\dagger \in \{s, u\}$, we consider an element $\gamma \in \M{M^G}{p}{q}$.

Consider a vector field $\xi$ along $\gamma$ such that 
\begin{equation}\label{eqn: kernel of D*}
- \nabla_\vt \xi + \nabla_\xi \op{grad}_f = 0
\end{equation}
and 
\begin{equation}\label{eqn: kernel of D* initial condition}
    \lim_{\vt\to -\infty} \frac{\xi}{|\xi|} = \mathfrak o_\cP{p}. 
\end{equation}
\begin{lemma}\label{lemma: kernel of D* is normal}
    $\xi$ is a section of $\mathcal N$ along $\gamma$.
\end{lemma}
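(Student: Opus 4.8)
The plan is to decompose $\xi$ pointwise along $\gamma$ as $\xi = \xi^T + \xi^N$, with $\xi^T(\vt) \in T_{\gamma(\vt)}M^G$ and $\xi^N(\vt) \in \mathcal N_{\gamma(\vt)}$, and to prove $\xi^T \equiv 0$. The first step is to show that the operator $\xi \mapsto -\nabla_\vt \xi + \nabla_\xi \op{grad}_f$ of \eqref{eqn: kernel of D*} respects this splitting along $\gamma$. Two facts enter. Since $g$ is equivariant, $\phi$ is an isometry, so its fixed locus $M^G = \op{Fix}(\phi)$ is totally geodesic; hence along the curve $\gamma \subset M^G$ the covariant derivative $\nabla_\vt$ preserves the orthogonal splitting $\gamma^*TM = \gamma^*TM^G \oplus \gamma^*\mathcal N$. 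Since $f$ and $g$ are both equivariant, $\op{grad}_f$ is a $\phi$-invariant vector field, so for $x \in M^G$ and $v \in T_x M$ the naturality of the Levi-Civita connection gives $d\phi_x(\nabla_v \op{grad}_f) = \nabla_{d\phi_x v}\op{grad}_f$; taking $v = \xi^T$, which $d\phi_x$ fixes, shows $\nabla_{\xi^T}\op{grad}_f$ is tangent to $M^G$, and taking $v = \xi^N$, which $d\phi_x$ negates, shows $\nabla_{\xi^N}\op{grad}_f$ is normal. Therefore \eqref{eqn: kernel of D*} decouples into
\[
-\nabla_\vt \xi^T + \nabla_{\xi^T}\op{grad}_f = 0, \qquad -\nabla_\vt \xi^N + \nabla_{\xi^N}\op{grad}_f = 0 .
\]
Moreover $(\op{grad}_f)|_{M^G} = \op{grad}_{f|_{M^G}}$ (with respect to $g|_{M^G}$) because $\op{grad}_f$ is tangent to $M^G$, and since $M^G$ is totally geodesic the first of these equations is precisely the formal $L^2$-adjoint $(D_\gamma^{M^G})^*$ of the linearized gradient-flow operator $D_\gamma^{M^G}$ of the pair $(f|_{M^G}, g|_{M^G})$ along $\gamma$.

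Next I would place $\xi^T$ in the right space. Reading $\xi$ as an element of $\ker D_\gamma^*$ in the relevant weighted space, i.e.\ as a cokernel element of the linearized operator $D_\gamma$ of $(f,g)$ along $\gamma$ normalized by \eqref{eqn: kernel of D* initial condition}, it decays at both ends of $\R$ in the appropriate weighted sense, and hence so does its tangential projection $\xi^T$; note also that \eqref{eqn: kernel of D* initial condition}, projected onto $TM^G$, forces $\xi^T(\vt) = o(|\xi(\vt)|)$ as $\vt \to -\infty$, consistent with $\xi^T \equiv 0$. By the decoupling, $\xi^T$ is then a genuine element of $\ker(D_\gamma^{M^G})^* \cong \coker D_\gamma^{M^G}$. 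Since $(f,g)$ is stably Morse-Smale, Lemma~\ref{lemma: transversality interiro implies boundary} shows that $(f,g)|_{M^G}$ is Morse-Smale, and the hypothesis $\ind_{f_{M^G}}(\cP{p}) - \ind_{f_{M^G}}(\cP{q}) = 1$ gives $\dim \M{M^G}{p}{q} = 0$; hence $\gamma$ is a rigid trajectory in $M^G$ and $D_\gamma^{M^G}$ is surjective, so $\coker D_\gamma^{M^G} = 0$. Therefore $\xi^T \equiv 0$ and $\xi = \xi^N$ is a section of $\mathcal N$ along $\gamma$, as claimed. (When $\cP{p}$ is unstable one checks directly, applying Lemma~\ref{lemma: boundary implies interior} to $-f$, that $D_\gamma$ is already surjective, so $\ker D_\gamma^* = 0$ and the statement is vacuous.)

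I expect the decoupling in the first step to be the main point: it is where the equivariance of $f$ and $g$ is used — through $M^G$ being totally geodesic and $\op{grad}_f$ being $\phi$-invariant — and where care is needed to identify the tangential summand of \eqref{eqn: kernel of D*} with the linearized operator of $(f|_{M^G}, g|_{M^G})$ along $\gamma$. Once that is in place, the vanishing of $\xi^T$ is a formal consequence of the transversality in $M^G$ guaranteed by the stably Morse-Smale hypothesis via Lemma~\ref{lemma: transversality interiro implies boundary}.
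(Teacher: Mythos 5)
Your decoupling argument is, in substance, exactly the paper's: the paper works in a parallel orthonormal frame $\{e_i\}$ with $e_1 = \mathfrak{n}$ (the parallel transport of $\mathfrak{o}_{\cP{p}}$, shown to stay normal precisely because $\phi$ is an isometry), and then shows $a_{1j} = \langle e_1, \nabla_{e_j}\op{grad}_f\rangle = 0$ for $j \neq 1$ using equivariance of $\op{grad}_f$. Your invariant reformulation --- $M^G$ totally geodesic so $\nabla_\vt$ preserves the splitting, and $\nabla_{\bullet}\op{grad}_f$ preserves the splitting by naturality under the isometry $\phi$ --- is the same content. Where you genuinely diverge is the finish: the paper stops once the system decouples, whereas you close the argument by identifying the tangential block with $(D_\gamma^{M^G})^*$ and invoking $\coker D_\gamma^{M^G} = 0$, which follows from Lemma~\ref{lemma: transversality interiro implies boundary} and rigidity of $\gamma$ in $M^G$. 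That is an attractive, explicit way to conclude, but it rests entirely on reading $\xi$ as an $L^2$ element of $\ker D_\gamma^*$ so that $\xi^T$ decays at \emph{both} ends.

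That reading is where a real gap appears: it fails in the case $\cP{p} \in \crit^u(f)$. When $\cP{p}$ is unstable, $\mathfrak{o}_{\cP{p}}$ is a descending eigendirection of $\hess{f}(\cP{p})$, so the normal solution behaves like $e^{\lambda_1 \vt}$ with $\lambda_1 < 0$ and \emph{blows up} as $\vt \to -\infty$; hence $\xi$ is not in $L^2$, and neither is the decomposition into tangential and normal pieces. Your parenthetical disposes of this case by asserting $\ker D_\gamma^* = 0$ and calling the statement vacuous, but the statement is not vacuous there: the paper uses the lemma with $\dagger \in \{s,u\}$ to define the maps $P^u_u$ and $R^u_u$ in $\boldsymbol{\p}$, which requires a nonzero $\xi$ (and the existence of $\lim_{\vt\to\infty}\xi/|\xi|$) in the unstable case. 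The equations \eqref{eqn: kernel of D*}--\eqref{eqn: kernel of D* initial condition} are being used as pointwise ODE conditions with a prescribed asymptotic \emph{direction}, not as membership in $L^2\ker D_\gamma^*$. Once that is the setup, the decoupling survives (it is purely local), but the ``$\xi^T$ lies in a vanishing cokernel'' step needs $\xi^T$ placed in the correct \emph{weighted} space for the relevant Fredholm theory, and the choice of weight differs between $\dagger = s$ and $\dagger = u$. As written, your argument only covers the stable case.
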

\begin{proof}
    Let $\mathfrak n$ be the parallel transport of $\mathfrak o_\cP{p}$ along $\gamma$. Let $\phi\in G$ be the reflection. Then $\phi_* \mathfrak n + \mathfrak n$ is $G$-invariant, and hence it is tangent to $M^G$. But $\phi_* \mathfrak o_\cP{p} + \mathfrak o_\cP{p} = 0$. This implies that $\mathfrak n + \phi_* \mathfrak n = 0$, in other words, at any $\vt\in \R$, for any $v \in T_{\gamma(\vt)}M^G$, we have $\langle \mathfrak n, v \rangle = \langle \phi_* \mathfrak n, \phi_* v \rangle = \langle -\mathfrak n, v \rangle$, and hence $\langle \mathfrak n, v \rangle = 0$. Therefore, $\mathfrak n$ is a section of $\mathcal N$. For any $\vt\in \R$, let $\{e_1(\vt), \dots, e_n(\vt)\}$ be an orthonormal basis of $T_{\gamma(\vt)} M$ such that $\nabla_\vt e_i = 0$ for $i = 1, \dots, n$ and $e_1 =\mathfrak n$. Let $\xi = \sum_{i=1}^n\xi_i e_i$, then Equation~\ref{eqn: kernel of D*} becomes 
    \[
    - \frac{d}{d\vt} \xi_i + \sum_{j = 1}^n a_{ij}\xi_j = 0,
    \]
    where $a_{ij} = \langle e_i, \nabla_{e_j} \op{grad}_f\rangle$ is symmetric in $i,j$. Now we show that for $j\neq 1$, $a_{1j} = 0$. Note that $e_j$ is orthogonal to $\mathfrak n$ since they are orthogonal at $\cP{p}$. This implies that $e_j$ is tangent to $M^G$, and hence $G$-equivariant. But $\op{grad}_f$ is also $G$-equivariant. Therefore, $\nabla_{e_j} \op{grad}_f$ is also $G$-equivariant, and hence tangent to $M^G$. This shows $a_{1j} = a_{j1} = 0$.
\end{proof}

\begin{definition}
    We say  $\gamma \in \M{M^G}{p}{q}$ is \emph{orientation-preserving} (\emph{orientation-reversing}) if 
    \[
    \langle \lim_{\vt\to \infty}\frac{\xi}{|\xi|}, \mathfrak o_\cP{q} \rangle  > (<) 0.
    \]
    We denote the orientation-preserving subspace by $\mathcal M^+ (M^G; \cP{p}, \cP{q})$ and the orientation-reversing subspace by $\mathcal M^- (M^G; \cP{p}, \cP{q})$.
\end{definition}
\begin{remark}
Solutions of Equation~\ref{eqn: kernel of D*} and Equation~\ref{eqn: kernel of D* initial condition} exist and are unique up to a positive scalar multiplication. So the definition is independent of the choice of $\xi$.
\end{remark}

Then for any $\dagger \in \{s, u\}$, we define the orientation-preserving maps
$$P^\dagger_u: C^\dagger \to C^u$$ by counting orientation-preserving trajectories, and the orientation-reversing maps $$R^\dagger_u: C^\dagger \to C^u$$ by counting orientation-reversing trajectories. In particular, we have $\bar{\p}^\dagger_u = P^\dagger_u + R^\dagger_u$, and \[\M{M^G}{p}{q}=\mathcal M^+ (M^G; \cP{p}, \cP{q}) \coprod \mathcal M^{-} (M^G; \cP{p}, \cP{q})\] for $\dagger \in \{u,s\}.$  
Then the differential from $\boldsymbol{C}$ to $\boldsymbol{C}$ is defined by
$$\boldsymbol{\p} = 
\begin{bmatrix}
    \p_o^o & 0 & 0 & \p_o^{u+} & \p_o^{u-}\\
    \p_s^o & \bar\p_s^s & \bar \p_s^u & \p_s^{u+} & \p_s^{u-} \\
    0 & \bar \p_u^s & \bar \p_u^u  & -\Pi_+ & \Pi_-\\
    P^s_u \p^o_{s+} + R^s_u\p^o_{s-} & 0 & 0 & -P_u^u + P^s_u \p^{u+}_{s+} + R^s_u \p^{u+}_{s-} & R_u^u + P^s_u \p^{u-}_{s+} + R^s_u \p^{u-}_{s-}\\
    - P^s_u \p^o_{s-} - R^s_u\p^o_{s+} &  0 & 0 & R_u^u - P^s_u \p^{u+}_{s-} - R^s_u \p^{u+}_{s+} & -P_u^u - P^s_u \p^{u-}_{s-} - R^s_u \p^{u-}_{s+}
\end{bmatrix},$$
where $\Pi_+: C^u_{+} \to C^u[-1]$ maps $\cP{p}_+ \mapsto \cP{p}$, and $\Pi_-: C^u_{-} \to C^u[-1]$ maps $\cP{p}_- \mapsto \cP{p}$.
Note that the remaining terms in $\boldsymbol{\p}$ are all defined in sections \ref{section: km} above.

Here we provide a table to summarize the linear maps defined above.

\begin{center}
\begin{tabular}{| c | c |}
\hline
\text{Linear maps} & \text{Moduli spaces of trajectories} \\
\hline
$\p_{s_{\pm}}^{o}: C^o \to C^s$ & $\M{M}{p}{q_{\pm}}$\\
\hline
$\p_o^{u\pm}: C^u_{\pm} \to C^o$ & $\M{M}{p_\pm}{q}$\\
\hline
 $\p_{s\pm}^{u+}: C^u_{+} \to C^s$ &  $\M{M}{p_+}{q_\pm}$\\
\hline
$\p_{s\pm}^{u-}: C^u_{-} \to C^s$ &$\M{M}{p_-}{q_\pm}$\\
\hline
$P^\dagger_u: C^\dagger \to C^u, \dagger \in \{s, u\}$ &  $\mathcal M^+ (M^G; \cP{p}, \cP{q})$\\
\hline
$R^\dagger_u: C^\dagger \to C^u, \dagger \in \{s, u\}$ &  $\mathcal M^- (M^G; \cP{p}, \cP{q})$\\
\hline
\end{tabular}
\end{center}

\begin{theorem}\label{thm: bold d square is zero}
 $\boldsymbol{\p}^2 = 0$.
\end{theorem}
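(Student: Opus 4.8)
The plan is to verify $\boldsymbol{\p}^2 = 0$ entrywise by comparing it to the known relations $\check{\fatD}^2 = 0$ (Proposition~\ref{prop: fatD square is zero}) and $\bar\p^2 = 0$, and then accounting for the new terms involving the $P$'s and $R$'s. The key algebraic inputs will be: (i) the relations among $\p_\dagger^\star$ and $\bar\p_\dagger^\star$ coming from the already-established $\check\fatD^2 = 0$ and from the fact that $(\bar C, \bar\p)$ is a complex, together with the analogous relations built from the \emph{signed refinements} $\p^o_{s\pm}$, $\p^{u\pm}_o$, $\p^{u\diamond}_{s\star}$ and the split $\bar\p^\dagger_u = P^\dagger_u + R^\dagger_u$; and (ii) compatibility identities expressing, e.g., $\p^o_s = \p^o_{s+} + \p^o_{s-}$, $\p^u_o = \p^u_o\Pi_+ \oplus \p^u_o\Pi_-$ in the obvious sense, and the way the sign $\gamma^\infty$ of a trajectory approaching a stable boundary critical point is governed by the orientation datum $\mathfrak o_\cP{q}$.

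The main step is the analysis of the boundary of the compactified one-dimensional moduli spaces that underlie each entry of $\boldsymbol{\p}^2$. For entries not involving the new rows/columns $C^u_\pm$, one reduces directly to Kronheimer--Mrowka's relations. For the new entries one must identify the ends of $\lm(M;\cP{p},\cP{q})$-type spaces of dimension $1$ where $\cP{p}$ or $\cP{q}$ carries a $\pm$ decoration. Here there are two flavors of breaking: (a) genuine breaking at an intermediate critical point in $M$, which contributes the ``product'' terms $\p^?_? \p^?_?$, and (b) \emph{breaking into a trajectory in $M^G$ followed by (or preceded by) a trajectory transverse to $M^G$}, which is where the obstruction bundle gluing from \cite{hutchings2009gluing} enters: a rigid broken trajectory consisting of a piece in $M^G$ from an unstable $\cP{p}$ to $\cP{q}$ and then a piece leaving $M^G$ contributes (or fails to contribute) according to whether the linearized normal equation \eqref{eqn: kernel of D*}--\eqref{eqn: kernel of D* initial condition} produces a section of $\mathcal N$ that is orientation-preserving or orientation-reversing at $\cP{q}$ (Lemma~\ref{lemma: kernel of D* is normal} and the ensuing definition). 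This is precisely why the maps $P^\dagger_u, R^\dagger_u$ appear, with signs, in the last two rows of $\boldsymbol{\p}$: the gluing parameter has a preferred sign, so exactly one of the two half-rays of the gluing region is attached, and the count of actual smooth trajectories for the stabilized function $f^\epsilon$ near such a configuration is $P^\dagger_u$ or $R^\dagger_u$ accordingly.

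Concretely I would organize the proof as follows. First, record the ``linear relations'' satisfied by the refined operators: from the splitting $\M{M}{p}{q} = \coprod \lm(M;\cP{p}_\diamond,\cP{q}_\star)$ and from $\bar\p^\dagger_u = P^\dagger_u + R^\dagger_u$, derive that the coarse relations in $\check\fatD^2 = 0$ and $\bar\p^2 = 0$ refine to relations among the decorated pieces (this is bookkeeping: each coarse relation is a sum of the corresponding decorated relations). Second, compute $\boldsymbol{\p}^2$ as a $5\times 5$ matrix and expand each entry. Third, for each entry, invoke the appropriate boundary-of-moduli-space identity: for entries landing in $C^o$ or $C^s$ from $C^o$, $C^s$, or $C^u[-1]$, these are the Kronheimer--Mrowka relations reused verbatim; for entries landing in or emanating from $C^u_\pm$, use the gluing analysis above. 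The crucial point to check is that the signs assigned in the last two rows of $\boldsymbol{\p}$ — the $-\Pi_+$ versus $\Pi_-$, and the precise pattern of $\pm P^s_u\p^o_{s\pm}$, $\pm R^s_u\p^o_{s\mp}$, etc. — are exactly those forced by (1) the orientation convention for descending manifolds of unstable boundary critical points using $\mathfrak o_\cP{p}$ in place of the outward normal, and (2) the sign of the gluing parameter in the obstruction-bundle gluing. I expect this sign verification to be the main obstacle: the underlying cancellations are dictated by the geometry, but matching them against the explicit matrix requires carefully tracking the orientation of $\mathcal N$ along broken trajectories and the induced orientation on the glued moduli space. Once the signs are pinned down, $\boldsymbol{\p}^2 = 0$ follows by assembling the entrywise identities.
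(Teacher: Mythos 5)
Your overall strategy --- verify $\boldsymbol{\p}^2 = 0$ entrywise by compactifying the relevant index-$2$ moduli spaces, identifying which broken trajectories occur as ends via obstruction bundle gluing, and matching the resulting signed count against the matrix entries --- is indeed the paper's strategy. You also correctly locate the key geometric input: a trajectory $\gamma$ in $M^G$ with one-dimensional cokernel $\ker D_\gamma^*$ contributes to the boundary of the compactification only when the sign condition of Definition~\ref{def: gluable} is satisfied, and tracking the normal section $\xi$ of Lemma~\ref{lemma: kernel of D* is normal} along $\gamma$ is exactly what produces the split $\bar\p^\dagger_u = P^\dagger_u + R^\dagger_u$ and the $\pm$-decorations.

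There are, however, two genuine problems with the plan as written. First, you treat $\check\fatD^2 = 0$ (Proposition~\ref{prop: fatD square is zero}) as an already-available input. It is not: Section~\ref{section: d square is zero} explicitly states that Proposition~\ref{prop: fatD square is zero} ``follows as a consequence'' of Theorem~\ref{thm: bold d square is zero}, so invoking it here is circular. Nor can you get it from Kronheimer--Mrowka's $\check\p^2=0$ via the quasi-isomorphism of $(\check{\mathbb C},\check\fatD)$ with $(\check C,\check\p)$, because that quasi-isomorphism argument itself uses the identities in $\check\fatD^2 = 0$. The honest prior inputs are $\bar\p^2 = 0$ and Kronheimer--Mrowka's Theorem~2.4.5, but not $\check\fatD^2 = 0$.

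Second, the proposed split --- entries ``reused verbatim from Kronheimer--Mrowka'' versus entries ``requiring the gluing analysis'' --- does not partition the matrix. Already the $(C^o\to C^o)$ entry of $\boldsymbol{\p}^2$ reads $\p^o_o\p^o_o + \p^{u+}_o\bigl(P^s_u\p^o_{s+} + R^s_u\p^o_{s-}\bigr) - \p^{u-}_o\bigl(P^s_u\p^o_{s-} + R^s_u\p^o_{s+}\bigr)$: every composite through $C^u_\pm$ carries the refined operators, which do not a~priori recombine into the coarse Kronheimer--Mrowka combination $\p^u_o\bar\p^s_u\p^o_s$. The cancellation has to be re-derived directly at the level of gluable broken trajectories with the refined signs. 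For this reason the paper proves a single uniform statement, Theorem~\ref{thm: gluing end goal}, which identifies --- for every pair of index difference $2$ --- the boundary of the compactified moduli space with the set of gluable simple broken trajectories, with the explicit sign correction $(-1)^m\prod_i\langle u^{-\infty}_{i+1},\mathfrak o_{\cP{p}_i}\rangle$, and then each entry of $\boldsymbol{\p}^2$ is checked against that one theorem. If you want to lean on Kronheimer--Mrowka, the correct object to generalize is their Lemma~2.4.3 (the gluing lemma), of which Theorem~\ref{thm: gluing end goal} is the extension, not their chain-complex relations.
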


We postpone the proof to Section~\ref{section: d square is zero}. 

Let $(\bar C, \bar \p)$ be a chain complex on $M^G$, i.e., $\bar C = C^s \oplus C^u[-1]$ and 
\[
\bar\p = 
\begin{bmatrix}
     \bar\partial_s^s &  \bar \partial_s^u  \\
    \bar \partial_u^s & \bar \partial_u^u \\
\end{bmatrix}.
\]
Then the following is clear.
\begin{lemma}
The chain complex $(\bar C, \bar \p)$ is a subcomplex of $(\boldsymbol{C}, \boldsymbol{\p})$.
\end{lemma}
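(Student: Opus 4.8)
The plan is to prove the statement by directly reading off the block structure of the matrix that defines $\boldsymbol{\p}$, so almost nothing beyond bookkeeping is required. First I would fix the map $\iota\colon \bar C = C^s \oplus C^u[-1] \hookrightarrow \boldsymbol{C} = C^o \oplus C^s \oplus C^u[-1] \oplus C^u_+ \oplus C^u_-$ to be the inclusion onto the second and third direct summands. Since $C^s$ carries the grading $\ind_f$ and $C^u[-1]$ the grading $\ind_f - 1$ in both $\bar C$ and $\boldsymbol{C}$, this $\iota$ is already a morphism of graded $R$-modules to begin with.

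Next I would isolate the two columns of $\boldsymbol{\p}$ indexed by $C^s$ and by $C^u[-1]$. Inspecting the displayed formula for $\boldsymbol{\p}$, these columns are
\[
\begin{bmatrix} 0 \\ \bar\p_s^s \\ \bar\p_u^s \\ 0 \\ 0 \end{bmatrix}
\qquad\text{and}\qquad
\begin{bmatrix} 0 \\ \bar\p_s^u \\ \bar\p_u^u \\ 0 \\ 0 \end{bmatrix},
\]
so every entry in the rows indexed by $C^o$, $C^u_+$, and $C^u_-$ vanishes. This is precisely the statement that $\boldsymbol{\p}\bigl(\iota(\bar C)\bigr)\subseteq \iota(\bar C)$, i.e., that $\iota(\bar C)$ is $\boldsymbol{\p}$-invariant, hence a subcomplex. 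Reading off the surviving $2\times 2$ block then identifies the restricted differential with $\left[\begin{smallmatrix} \bar\p_s^s & \bar\p_s^u \\ \bar\p_u^s & \bar\p_u^u \end{smallmatrix}\right] = \bar\p$, so $\iota$ is an inclusion of chain complexes in the stated sense.

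Two points deserve a sentence each, though neither is a genuine obstacle. First, one should note why the relevant entries of $\boldsymbol{\p}$ really are the structure maps of the $M^G$-complex and why no stray component leaks into, say, the $C^o$ row: by construction $\bar\p_s^s$, $\bar\p_s^u$, $\bar\p_u^s$, $\bar\p_u^u$ count rigid trajectories contained in $M^G$, and the identity $\bar\p^\dagger_u = P^\dagger_u + R^\dagger_u$ recorded above is exactly what makes the $(C^u[-1],C^s)$ and $(C^u[-1],C^u[-1])$ entries of $\boldsymbol{\p}$ equal $\bar\p_u^s$ and $\bar\p_u^u$. Second, for ``subcomplex'' to be meaningful one needs $\bar\p^2 = 0$; this can be taken either from the square-zero relation for the ordinary Thom--Smale--Witten complex of the Morse--Smale pair $(f_{M^G}, g_{M^G})$ --- which is Morse--Smale by Lemma~\ref{lemma: transversality interiro implies boundary} --- or, after the fact, as the restriction of Theorem~\ref{thm: bold d square is zero} to the invariant block $\iota(\bar C)$. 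The real content, to the extent there is any, was already front-loaded into the definition of $\boldsymbol{\p}$, where the $M^G$-part was deliberately arranged as an isolated block, so the verification here is a one-line matrix inspection.
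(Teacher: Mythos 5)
Your proof is correct and matches what the paper implicitly does (the paper simply declares the lemma "clear"): you read off the $C^s$ and $C^u[-1]$ columns of $\boldsymbol{\p}$, observe the zero entries in the $C^o$, $C^u_+$, $C^u_-$ rows, and identify the surviving $2\times 2$ block with $\bar\p$. The aside about $\bar\p^\dagger_u = P^\dagger_u + R^\dagger_u$ is harmless but unnecessary here, since the matrix entries are already written as $\bar\p_u^s$ and $\bar\p_u^u$.
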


Now we explain the $G$-action on the chain complex $(\boldsymbol{C}, \boldsymbol{\p})$. This part can be viewed as a special case of section 3.6 in \cite{bao2021equivariant}.

For any $\cP{p} \in \crit(f)$ and $h \in G$, the map $dh: T_\cP{p} \mathscr D_\cP{p} \to T_{h \cP{p}} \mathscr D_{h\cP{p}}$ is orientation-preserving or reversing. We define $\sigma(h, \cP{p})=\pm 1$ if $dh$ is respectively orientation-preserving or reversing.
We then define a $G$-action on $\boldsymbol{C}$ to be the obvious action multiplied by $\sigma$. To be precise, let $G = \{e, \phi\}$. Then $e$ acts as the identity on $\boldsymbol{C}$, and $\phi$ acts as follows:

\begin{enumerate}

    \item For each generator $[\cP{p}]$, $\phi$ maps $C^o$, $C^s$, and $C^u[-1]$ to themselves and 

    $$\phi[\cP{p}] = \sigma(\phi, \cP{p}) [\phi \cP{p}].$$ In particular, 

    \begin{enumerate}

    \item if $[\cP{p}] \in C^o$ and if we choose the orientation of $\mathscr D_\cP{p}$ and $\mathscr D_{\phi \cP{p}}$ so that $d\phi$ preserves the orientation, then $\phi[\cP{p}] = [\phi \cP{p}]$;
  
     \item if $[\cP{p}] \in C^s$ and if we choose the orientation of $\mathscr D_\cP{p}$ and $\mathscr D_{\phi \cP{p}}$ so that $d\phi$ preserves the orientation, then $\phi[\cP{p}] = [\phi \cP{p}]$.
       
    \item if $[\cP{p}] \in C^u[-1]$, then $\phi[\cP{p}] = [\cP{p}]$.

    \end{enumerate}

    \item For each generator $[\cP{p}]$, $\phi: C^u_\pm \to C^u_\mp$ is given by: 

    $$\phi [\cP{p}_\pm] = \sigma(\phi, \cP{p}) [\cP{p}_\mp].$$ In particular, $\phi[\cP{p}_\pm] = - [\cP{p}_\mp]$.

\end{enumerate}

\begin{lemma}

    The $G$-action on $\boldsymbol{C}$ commutes with $\boldsymbol{\p}$.

\end{lemma}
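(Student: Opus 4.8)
The plan is to reduce the lemma to a naturality statement for signed trajectory counts, together with the bookkeeping of the $\pm$ decorations. Since $f$ and $g$ are $G$-equivariant, $\phi$ commutes with $\op{grad}_f$, so $\gamma\mapsto\phi\circ\gamma$ is an index-preserving bijection $\M{M}{p}{q}\xrightarrow{\ \sim\ }\M{M}{\phi p}{\phi q}$ for all critical points, restricting to a bijection on the moduli spaces that lie inside $M^G$. First I would record the two consequences of $\phi$ being a reflection: on $M^G$ the map $\phi$ is the identity, so this restricted bijection is the identity on every $\M{M^G}{p}{q}$; and $\phi$ acts by $-1$ on the normal line $\mathcal N$, so $\phi_*\mathfrak o_\cP{p} = -\mathfrak o_{\phi\cP{p}}$ at each $\cP{p}\in M^G$. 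The second fact is exactly what makes $\phi$ interchange the two sides of $M^G$: it sends $\M{M}{p}{q_+}$ onto $\M{M}{\phi p}{q_-}$, sends $\M{M}{p_+}{q}$ onto $\M{M}{p_-}{q}$, and more generally reverses every $\pm$ side label, while on the trajectory spaces inside $M^G$ — where the orientation-preserving/orientation-reversing dichotomy is recorded by the vector field $\xi$ of Lemma~\ref{lemma: kernel of D* is normal} — the two sign reversals $\phi_*\xi = -\xi$ and $\phi_*\mathfrak o_\cP{q} = -\mathfrak o_\cP{q}$ cancel, so the splitting is untouched and $P^\dagger_u$, $R^\dagger_u$ commute with $\phi$ verbatim (consistently with $\phi|_{M^G}=\op{id}$).

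Next I would settle the orientation compatibility. Each entry of $\boldsymbol{\p}$ counts rigid trajectories with a sign that is read off from the chosen orientations of $\mathscr D_\cP{p}$ and $\mathscr D_\cP{q}$ at the two ends; pushing a trajectory forward by $\phi$ multiplies those reference orientations by $\sigma(\phi,\cP{p})$ at the source and by $\sigma(\phi,\cP{q})$ at the target, so that the signed count satisfies $\#\,\M{M}{\phi p}{\phi q} = \sigma(\phi,\cP{p})\,\sigma(\phi,\cP{q})\,\#\,\M{M}{p}{q}$, and likewise for each of the refined counts. Combined with the bijections of the previous paragraph, this shows that every elementary map in the table — $\p_o^o$, $\p_s^o$, $\bar\p_s^s$, $\bar\p_s^u$, $\p^o_{s\pm}$, $\p_o^{u\pm}$, $\p^{u\diamond}_{s\star}$, $P^\dagger_u$, $R^\dagger_u$ — intertwines $\boldsymbol{\p}$ with the $\sigma$-twisted action on generators, the $\pm$ labels being transported by the side-swapping rule. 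This step does all of the real work, and it uses no geometric input beyond the equivariance of $(f,g)$.

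Finally I would assemble these elementary intertwiners into the matrix identity $\phi\circ\boldsymbol{\p} = \boldsymbol{\p}\circ\phi$. The action of $\phi$ on $\boldsymbol{C}$ is a signed permutation on $C^o$, the identity on $C^s$ and on $C^u[-1]$, and the sign-twisted swap $\cP{p}_\pm\mapsto-\cP{p}_\mp$ between $C^u_+$ and $C^u_-$; conjugating the displayed $5\times5$ matrix by it exchanges its fourth and fifth rows and its fourth and fifth columns (the two $-1$'s from the swap cancelling) and exchanges every $+$ and $-$ decoration inside the entries, and the assertion is precisely that this leaves $\boldsymbol{\p}$ fixed — which one reads off from the visible symmetry of the matrix, e.g.\ rows four and five are obtained from each other by $+\leftrightarrow-$, and the third-row entries $-\Pi_+$ and $\Pi_-$ map into one another. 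The hard part is purely sign discipline: one must pin down once and for all the convention for orienting the zero- and one-dimensional trajectory spaces out of the descending-manifold orientations, verify that $\phi_*$ affects it only through the scalars $\sigma(\phi,\cdot)$ at the two ends, and then confirm that the minus signs decorating $\boldsymbol{\p}$ — the very ones arranged to make $\boldsymbol{\p}^2=0$ — are exactly the ones that also make conjugation by $\phi$ trivial. One could instead quote Section~3.6 of \cite{bao2021equivariant}, of which this is a special case, but once the conventions are fixed the direct verification is short.
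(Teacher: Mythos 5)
Your proposal is correct and follows essentially the same route as the paper: the paper's proof checks the single entry $\phi\,\partial_o^o = \partial_o^o\,\phi$ via the bijection $\gamma\mapsto h\gamma$ on moduli spaces together with the orientation rule $\boldsymbol o(h\gamma)=\sigma(h,\cP{p})\,\sigma(h,\cP{q})\,\boldsymbol o(\gamma)$, and leaves the remaining entries to the reader. You make the same naturality statement the engine of the argument, and additionally spell out the parts the paper leaves implicit — that $\phi_*$ negates $\mathfrak o_\cP{p}$ and hence swaps the $\pm$ side labels, that the orientation-preserving/reversing dichotomy on $M^G$ is untouched because the two sign flips on $\xi$ and $\mathfrak o_\cP{q}$ cancel, and how the $\sigma$-twist, the $\pm$-swap, and the $(-1)\cdot(-1)$ from $\phi|_{C^u_\pm}$ combine when conjugating the $5\times5$ matrix. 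Your last paragraph is understandably schematic about the sign discipline, but that is exactly the part the paper also defers with ``a direct calculation shows''; the essential mathematical content is identical.
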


\begin{proof}

    We prove $\phi \p_o^o = \p_o^o \phi$ and leave the rest for the reader to check. For any $\cP{p}, \cP{q} \in \crit^o(f)$ with $\ind (\cP{p}) - \ind(\cP{q})= 1$ and any $[\gamma] \in \M{M}{p}{q}$, we define the orientation of $\gamma$ as in standard Morse homology. Specifically, we define $\orientation{\gamma} \in \{1,-1\}$ such that the following isomorphism   \begin{equation}
        T_\cP{p} \mathscr D_\cP{p}  \simeq \orientation{\gamma} \mathbb R \langle \partial_\vt \rangle \oplus T_\cP{q} \mathscr D_\cP{q}
    \end{equation}
    induced by the linearized negative gradient flow is orientation-preserving and
 $s$ represents the coordinate of the domain of $\gamma$. Then $$\p^o_o[\cP{p}] = \sum_{\substack{q \in \crit^o(f)\\ \ind(\cP{p}) - \ind(\cP{q}) = 1}} \sum_{\gamma \in \M{M}{p}{q}}\orientation{\gamma}[q].$$

    Note that for any $h \in G$ maps the moduli space $\M{M}{p}{q}$ to $\mathcal{M}(M;h\cP{p},h\cP{q})$ and changes the orientation by: $$\orientation{h\gamma} = \sigma(h, \cP{p})\sigma(h,\cP{q})\orientation{\gamma}.$$
    Then a direct calculation shows $dh[\cP{p}] = hd[\cP{p}]$.

\end{proof}

\begin{example}[Upright torus]\label{example: torus}
\begin{figure}[h]
     \centering
    \begin{subfigure}[b]{.4\textwidth}
        \centering
         \begin{tikzpicture}[scale = 0.8]
    \draw[blue] (0,0) ellipse (2cm and 3cm);
    \draw[blue] (-0.1, 1.2) .. controls (0.5,1) and (0.5,-1) .. (-0.1, -1.2);
    \draw[blue] (0, 1.1) .. controls (-0.5, 1) and (-0.5, -1) .. (0,-1.1);

    \draw[orange, fill] (0, 3) circle (0.05);
    \node[anchor = south] at (0, 3) {$\cP{a}$};
    \draw[fill] (0, 1.05) circle (0.05);
    \node[anchor = south] at (0, 1.1) {$\cP{b}$};

    \draw[orange, fill] (0, -1.05) circle (0.05);
    \node[anchor = north] at (0, -1.1) {$\cP{c}$};

    \draw[fill] (0, -3.) circle (0.05);
    \node[anchor = north] at (0, -3.1) {$\cP{d}$};

    \draw[] (0.2, 2.98) .. controls (-0.7, 3.1) and (-0.6, 1).. (0, 1.05);
    \node[red] at (-0.75, 2) {\footnotesize{$u_{+}$}};

    \draw[dotted] (0.2, 2.98) .. controls (0.75, 2.3) and (0.55, 0.9).. (0, 1.05);
    \node[red] at (0.85, 1.95) {\footnotesize $u_{-}$};

    \node[red] at (-0.6, 0) {\footnotesize $v$};

    \node[red] at (0.6, 0) {\footnotesize $w$};
    
    \end{tikzpicture}
        \caption{Upright torus}
        \label{fig:upright torus in plane}
    \end{subfigure}
 \begin{subfigure}[b]{.4\textwidth}
        \centering
             \begin{tikzpicture}[
         > = Stealth,
->-/.style = {decoration={markings,
                          mark=at position 0.55 with {\arrow{>}}
                         },
               postaction={decorate}
              },
->>-/.style = {decoration={markings,
                         mark=at position 0.55 with {\arrow{>>}}
                        },
               postaction={decorate}
              },
dot/.style = {circle, draw, fill, inner sep=0.8pt,
              label=#1}
                        ]
\draw[->-,gray] (-0.5,2) -- ++ (1,0);
\draw[->-,gray] (-.5,-2) -- ++ (1,0);
\draw[->>-,gray] (-2.5,-0.5) -- ++ (0,1);
\draw[->>-,gray] (2.5, -0.5) -- ++ (0,1);

 \node[blue, anchor = north] at (2.25,2.25) {$M^G$};
\node (n11) [dot={left:$\cP{b}$}] at (-1.5, 1.5) {};
\node (n12) [orange, dot={left: $\cP{a}$}] at (-1.5, 0.0) {};
\node (n13) [dot={left:$\cP{b}$}] at (-1.5,-1.5) {};
\node (n21) [orange, dot={below left:$\cP{c}$}] at (0, 1.5) {};
\node (n22) [dot={below left:$\cP{d}$}] at (0, 0.0) {};
\node (n23) [orange, dot={below left:$\cP{c}$}] at (0,-1.5) {};
\node (n31) [dot={right:$\cP{b}$}] at (1.5, 1.5) {};
\node (n32) [orange, dot={right:$\cP{a}$}] at (1.5, 0.0) {};
\node (n33) [dot={right:$\cP{b}$}] at (1.5,-1.5) {};
\foreach \y in {1,2,3}
{
    \draw[->-,blue] (n1\y) -- (n2\y);
    \draw[->-,blue] (n3\y) -- (n2\y);
}
\draw[->-] (n12) -- (n11);
\draw[->-] (n12) -- (n13);
\draw[->-] (n21) -- (n22);
\draw[->-] (n23) -- (n22);
\draw[->-] (n32) -- (n31);
\draw[->-] (n32) -- (n33);

        \node[anchor = north] at (-1.3, 0.45) {\footnotesize $+$};
        \node[anchor = north] at (-1.3, 0.05) {\footnotesize $-$};
        \node[anchor = north] at (1.3, 0.45) {\footnotesize $+$};
        \node[anchor = north] at (1.3, 0.05) {\footnotesize $-$};
         \node[anchor = north] at (-1.3, 1.95) {\footnotesize $-$};
         \node[anchor = north] at (-1.3, 1.55) {\footnotesize $+$};
         \node[anchor = north] at (1.3, 1.95) {\footnotesize $-$};
         \node[anchor = north] at (1.3, 1.55){\footnotesize $+$};
         \node[anchor = north] at (-1.3, -1.05) {\footnotesize $-$};
         \node[anchor = north] at (-1.3, -1.45) {\footnotesize $+$};
         \node[anchor = north] at (1.3, -1.05) {\footnotesize $-$};
         \node[anchor = north] at (1.3, -1.45) {\footnotesize $+$};
        \node[anchor = north] at (0.25, 1.95) {\footnotesize $-$};
        \node[anchor = north] at (0.25, 1.55) {\footnotesize $+$};
         \node[anchor = north] at (0.25, -1.05) {\footnotesize $-$};
        \node[anchor = north] at (0.25, -1.45) {\footnotesize $+$};

        \node[anchor = north] at (0.25, 0.45) {\footnotesize $+$};
        \node[anchor = north] at (0.25, 0.05) {\footnotesize $-$};

        \node[red] at (-1.75, .75) {\footnotesize $u_{+}$};
        \node[red] at (-1.75, -0.75) {\footnotesize $u_{-}$};
        \node[red] at (-.75, 1.75) {\footnotesize $v$};
        \node[red] at (.8, 1.75) {\footnotesize $w$};
    \end{tikzpicture}
        \caption{Polygon representation of torus}
        \label{fig:dynamics on torus}
    \end{subfigure}
    \caption{Here we depict a choice of orientations for the upright torus with trajectories $u_{\pm}$ flowing from the $\pm$ side of $M^G$. In addition, $v$ and $w$ are the two orientation--preserving trajectories from $\cP{b}$ to $\cP{c}$.
    }
    \label{torus example}
\end{figure}

Let $M$ be an upright torus lying in the plane, with four critical points labeled $\cP{a},\cP{b},\cP{c},\cP{d}$ from top to bottom. The diffeomorphism $\phi$ reflects the front and back of $M$, and the fixed point set $M^G$ is a disjoint union of two circles. The critical points $\cP{a}$ and $\cP{c}$ are unstable, while the critical points $\cP{b}$ and $\cP{d}$ are stable. 

At each critical point $\cP{p} \in \{\cP{a}, \cP{b}, \cP{c}, \cP{d}\}$, we choose the unit vector $\mathfrak o_p \in T_{\cP{p}}M$ perpendicular to $T_{\cP{p}}M^G$ to point to the front side of $M$.

Now we compute $(\boldsymbol{C}, \boldsymbol{\p})$, where 
\[
\boldsymbol{C} = \Z\langle \varnothing \rangle \oplus \Z\langle \cP{b},\cP{d} \rangle \oplus \Z\langle \cP{a},\cP{c} \rangle[-1] \oplus \Z \langle \cP{a}_{+}, \cP{c}_{+} \rangle \oplus \Z \langle \cP{a}_{-}, \cP{c}_{-} \rangle.
\]
Starting with an index 2 critical point, we have $\boldsymbol\p \cP{a}_{+} = \p_{s}^{u+} \cP{a}_{+} - \Pi_{+} \cP{a}_{+} = \cP{b} - \cP{a}$, where all other terms vanish. Here, $\p_s^{u+}$ counts the number of flowlines from $\cP{a}$ to $\cP{b}$ from the positive side of $\cP{a}$ (in agreement with $\mathfrak o_{\cP{a}}$). 

To illustrate how the remaining terms vanish, we examine the count of broken flowlines from $\cP{a}_{+}$ to $\cP{c}_{+}$. This count is zero because the broken trajectories $(u_{+}, v)$ and $(u_{+}, w)$ cancel each other, as depicted in Figure~\ref{torus example}.

Similarly, we have the remaining terms:
\begin{align*}
      \boldsymbol \p \cP{a}_{+}&=\cP{b}-\cP{a} & \boldsymbol{\p} \cP{a}&=0 & \boldsymbol \p \cP{c}_{+}&=\cP{d}-\cP{c} &
\boldsymbol \p \cP{c}&=0\\
 \boldsymbol \p \cP{a}_-&=\cbu{\cP{a}-\cP{b}} \cb & \boldsymbol \p \cP{b}&=0 &   \boldsymbol \p \cP{c}_{-}&=-\cP{d}+\cP{c}& \boldsymbol \p \cP{d}&=0.\\ 
\end{align*}
This gives us the homology of the torus \[{\cbu{H_0(T^2; \Z)= \Z, \quad \quad H_1(T^2; \Z)= \Z^2, \quad \quad H_2(T^2; \Z)= \Z}}\] as expected.
\end{example}

\begin{remark}
    Note that $\m{a}{c} = \emptyset$.  This means that $\m{a}{b}$ and $\m{b}{c}$ do not glue. But some of these broken trajectories may glue after a Kuranishi perturbation. This is the approach of \cite{bao2024computable}, which is not what we do in this paper.
\end{remark}

\begin{example}[Klein bottle]\label{example: klein bottle}
    \begin{figure}[h]
        \centering
        \begin{tikzpicture}[
             > = Stealth,
    ->-/.style = {decoration={markings,
                              mark=at position 0.55 with {\arrow{>}}
                             },
                   postaction={decorate}
                  },
    ->>-/.style = {decoration={markings,
                             mark=at position 0.55 with {\arrow{>>}}
                            },
                   postaction={decorate}
                  },
    dot/.style = {circle, draw, fill, inner sep=0.8pt,
                  label=#1}
                            ]
    \draw[->-,gray] (-0.5,2) -- ++ (1,0);
    \draw[->-,gray] (-.5,-2) -- ++ (1,0);
    \draw[->>-,gray] (-2.5,0.5) -- ++ (0,-1);
    \draw[->>-,gray] (2.5, -0.5) -- ++ (0,1);
    
    \node[blue, anchor = north] at (2.25,2.25) {$M^G$};
    \node (n11) [dot={left:$\cP{d}$}] at (-1.5, 1.5) {};
    \node (n12) [orange, dot={left:$\cP{c}$}] at (-1.5, 0.0) {};
    \node (n13) [dot={left:$\cP{d}$}] at (-1.5,-1.5) {};
    \node (n21) [orange, dot={below left:$\cP{a}$}] at (0, 1.5) {};
    \node (n22) [dot={below left:$\cP{b}$}] at (0, 0.0) {};
    \node (n23) [orange, dot={below left:$\cP{a}$}] at (0,-1.5) {};
    \node (n31) [dot={right:$\cP{d}$}] at (1.5, 1.5) {};
    \node (n32) [orange, dot={right:$\cP{c}$}] at (1.5, 0.0) {};
    \node (n33) [dot={right:$\cP{d}$}] at (1.5,-1.5) {};
    
    \foreach \y in {1,2,3} {
        \draw[->-,blue] (n2\y) -- (n1\y);
        \draw[->-,blue] (n2\y) -- (n3\y);
    }
    \draw[->-] (n12) -- (n11);
    \draw[->-] (n12) -- (n13);
    \draw[->-] (n21) -- (n22);
    \draw[->-] (n23) -- (n22);
    \draw[->-] (n32) -- (n31);
    \draw[->-] (n32) -- (n33);
    
    \node[anchor = north] at (-1.3, 0.45) {\footnotesize $-$};
    \node[anchor = north] at (-1.3, 0.05) {\footnotesize $+$};
    \node[anchor = north] at (1.3, 0.45) {\footnotesize $+$};
    \node[anchor = north] at (1.3, 0.05) {\footnotesize $-$};
    \node[anchor = north] at (-1.3, 1.95) {\footnotesize $-$};
    \node[anchor = north] at (-1.3, 1.55) {\footnotesize $+$};
    \node[anchor = north] at (1.3, 1.95) {\footnotesize $-$};
    \node[anchor = north] at (1.3, 1.55){\footnotesize $+$};
    \node[anchor = north] at (-1.3, -1.05) {\footnotesize $-$};
    \node[anchor = north] at (-1.3, -1.45) {\footnotesize $+$};
    \node[anchor = north] at (1.3, -1.05) {\footnotesize $-$};
    \node[anchor = north] at (1.3, -1.45) {\footnotesize $+$};
    \node[anchor = north] at (0.25, 1.95) {\footnotesize $-$};
    \node[anchor = north] at (0.25, 1.55) {\footnotesize $+$};
    \node[anchor = north] at (0.25, -1.05) {\footnotesize $-$};
    \node[anchor = north] at (0.25, -1.45) {\footnotesize $+$};
    \node[anchor = north] at (0.25, 0.45) {\footnotesize $+$};
    \node[anchor = north] at (0.25, 0.05) {\footnotesize $-$};
    
    \node[red] at (-0.2, 0.7) {\footnotesize $u_{+}$};
    \node[red] at (-0.2, -0.7) {\footnotesize $u_{-}$};
    \node[red] at (-.75, .25) {\footnotesize $v$};
    \node[red] at (.75, 0.25) {\footnotesize $w$};
    
        \end{tikzpicture}
        \caption{A polygon representation of the Klein bottle, formed by identifying the gray arrows.}
    \label{fig:dynamics on Klein bottle}
    \end{figure}
    
    Consider a Klein bottle with the same setup as in Example~\ref{example: torus}. The group $G=\Z_2$ acts such that the fixed point set $M^G$ consists of two distinct loops, as shown in Figure~\ref{fig:dynamics on Klein bottle}. The critical points $\cP{a}$ and $\cP{c}$ are unstable, while the critical points $\cP{b}$ and $\cP{d}$ are stable.
    
    We choose $\mathfrak o_{\cP{p}}$ for each critical point $\cP{p} \in \crit(f)$ as indicated by the $+$ sign in the figure. Using the same vector space
    \[
    \boldsymbol{C} = \Z\langle \varnothing \rangle \oplus \Z\langle \cP{b}, \cP{d} \rangle \oplus \Z\langle \cP{a}, \cP{c} \rangle[-1] \oplus \Z \langle \cP{a}_{+}, \cP{c}_{+} \rangle \oplus \Z \langle \cP{a}_{-}, \cP{c}_{-} \rangle,
    \]
    we compute the homology. For example, we have $\boldsymbol\p \cP{a}_{+} = \cP{b} - \cP{a} + \cP{c}_++\cP{c}_-$. The term $\cP{b} - \cP{a}$ is straightforward to obtain, and $\cP{c}_\pm$ results from counting the broken trajectories $(u_{+}, w)$ and $(u_{+}, v)$ from $\cP{a}_{+}$ to $\cP{c}_{\pm}$ respectively. 
    
    Computing this for all the critical points, we obtain:
   
\begin{align*}
    \boldsymbol{\p} \cP{a}_{+} &= \cP{b} - \cP{a} + {\cbu{\cP{c}_+ + \cP{c}_-} }&  \boldsymbol{\p} \cP{a} &= 0 &  \boldsymbol{\p} \cP{c}_{+} &= \cP{d} - \cP{c} &  \boldsymbol{\p} \cP{c} &= 0 \\
     \boldsymbol{\p} \cP{a}_{-} &= -\cP{b} + \cP{a} + {\cbu{\cP{c}_+ + \cP{c}_-}}  & \boldsymbol{\p} \cP{b} &= 0 &  \boldsymbol{\p} \cP{c}_{-} &= -\cP{d} + \cP{c} & \boldsymbol{\p} \cP{d} &= 0.\\
\end{align*}
This yields the homology of the Klein bottle: \[{\cbu{H_0(K; \Z)= \Z, \quad \quad H_1(K; \Z)= \Z\oplus \Z_2, \quad \quad H_2(K; \Z)= 0}}.\]
\end{example}

\cbu  
We conjecture that the homology of $(\boldsymbol{C}, \boldsymbol{\p})$ is isomorphic to the homology of $M$. In fact, we have a stronger conjecture.
Let $(f,g)$ be a stably Morse-Smale pair.
Suppose that the answer to Question~\ref{question: name of stably ms} is positive. 
Then the Morse-Smale pair $(f^\epsilon, g)$ gives the usual Thom-Smale-Witten complex $(C^\epsilon, \p^\epsilon)$. 
\begin{conjecture}
    The chain complexes $(C^\epsilon, \p^\epsilon)$ and $(\boldsymbol{C}, \boldsymbol{\p})$ are $\Z[G]$-isomorphic.
\end{conjecture}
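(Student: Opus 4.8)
The plan is to construct, for every sufficiently small $\epsilon > 0$, an explicit $\Z[G]$-linear chain isomorphism $\Phi\colon (\boldsymbol{C},\boldsymbol{\p}) \to (C^\epsilon,\p^\epsilon)$: first match generators together with the $G$-actions, then use $C^\infty_{\mathrm{loc}}$-compactness and obstruction-bundle gluing to match the differentials entry by entry. We use the standing hypothesis (a positive answer to Question~\ref{question: name of stably ms}) throughout, since it guarantees that $(f^\epsilon,g)$ is honestly Morse-Smale and hence that $(C^\epsilon,\p^\epsilon)$ is the ordinary equivariant Thom-Smale-Witten complex, with all relevant moduli spaces transversely cut out.

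\textbf{Step 1: the generator-level isomorphism.} By the construction of $f^\epsilon$, the critical points of $f^\epsilon$ are: the interior critical points of $f$ (unchanged); the stable critical points of $f$ (unchanged, still on $M^G$); each unstable $\cP{p}\in\crit^u(f)$, now with $\ind(\cP{p})$ lowered by one and lying in $\crit^s(f^\epsilon)$; and, for each such $\cP{p}$, two new interior critical points $\cP{p}_+,\cP{p}_-$ of the original index, one on each side of $M^G$. This matches precisely the decomposition $\boldsymbol{C} = C^o\oplus C^s\oplus C^u[-1]\oplus C^u_+\oplus C^u_-$, gradings included, so there is a tautological graded module isomorphism $\Phi$. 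That $\Phi$ is $\Z[G]$-equivariant: for critical points untouched by the stabilization the descending manifolds of $f^\epsilon$ are $C^1$-close to those of $f$, so the orientation signs $\sigma(\phi,\cdot)$ of Section~\ref{section: equivariant chain complex} agree; for the pair $\cP{p}_\pm$, the reflection $\phi$ exchanges the two sides of $M^G$, hence $\cP{p}_+\leftrightarrow\cP{p}_-$, and since $d\phi$ reverses the chosen normal $\mathfrak{o}_{\cP{p}}$ one recovers precisely $\phi[\cP{p}_\pm]=-[\cP{p}_\mp]$.

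\textbf{Step 2: matching the differentials.} The stabilization is supported in disjoint small balls $B_{\cP{p}}$ about the unstable critical points $\cP{p}$ of $f$, and inside each $B_{\cP{p}}$ there is an explicit local model for $f^\epsilon$ (that of Theorem~1.7 of \cite{bao2024morse}) in which the local negative-gradient trajectories through $\{\cP{p},\cP{p}_+,\cP{p}_-\}$ are completely understood: a trajectory whose $\epsilon\to 0$ limit hits $\cP{p}$ must approach or leave $\cP{p}$ tangent to $M^G$, while one limiting to $\cP{p}_\pm$ does so from the corresponding side. Fix generators $x,x'$ of $\boldsymbol{C}$ with $|\Phi x|-|\Phi x'|=1$. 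A $C^\infty_{\mathrm{loc}}$-compactness argument shows every sequence in the rigid moduli space $\mathcal M_{f^\epsilon}(\Phi x,\Phi x')$ converges, as $\epsilon\to 0$, to a broken trajectory of $f$; by index and energy counting, combined with the local model, the only broken configurations that can occur are the one- and two-level ones recorded in the matching entry of $\boldsymbol{\p}$ (for example, for the $C^o\to C^u_-$ entry: a flowline of $f$ from an interior critical point to a stable critical point $\cP{q}$ approached from a fixed side, followed by a flowline inside $M^G$ from $\cP{q}$ to $\cP{p}$ of a fixed orientation-preserving or orientation-reversing type). Conversely, obstruction-bundle gluing in the Morse setting (\cite{hutchings2009gluing}; and \cite{bao2024morse} in the clean-intersection case) produces, for each such broken trajectory of $f$, a unique nearby genuine trajectory of $f^\epsilon$ once $\epsilon$ is small, with a gluing sign pinned down by the orientation conventions already built into $\boldsymbol{\p}$. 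Carrying this sign-compatible bijection through every entry gives $\Phi\circ\boldsymbol{\p}=\p^\epsilon\circ\Phi$, hence the conjecture.

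\textbf{Main obstacle.} The compactness direction is routine; the delicate part of Step 2 is twofold. First, one must rule out unexpected breaking — in particular, trajectories of $f^\epsilon$ that enter some $B_{\cP{p}}$ but whose limiting configuration is not accounted for by any matrix entry — which is handled by the index bookkeeping together with the explicit local model, but needs care when several balls are visited in succession. Second, and most importantly, one must check that the $\pm$ signs, the split into $P^\dagger_u$ versus $R^\dagger_u$, and the split into $\cP{q}_+$ versus $\cP{q}_-$ contributions in the definition of $\boldsymbol{\p}$ coincide exactly with the signs the gluing theorem assigns to the corresponding $f^\epsilon$-trajectories, all compatibly with the choice of $\mathfrak{o}_{\cP{p}}$ and the orientations of descending manifolds fixed in Section~\ref{section: orientation of moduli spaces}. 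This orientation bookkeeping is the real work; it can be organized in tandem with the proof of Theorem~\ref{thm: bold d square is zero}, since the same gluing inputs and sign conventions are at play.
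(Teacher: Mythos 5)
The statement you are addressing is explicitly presented in the paper as a \emph{conjecture}, and the paper does not attempt a proof. Your proposal is not a proof either; it is a plan that coincides almost verbatim with what the authors themselves describe as the ``ideal'' but unavailable approach. In Section~\ref{section: d square is zero} they write that one would like to construct an isomorphism between $(\boldsymbol{C},\boldsymbol{\p})$ and the Thom--Smale--Witten complex of $(f^\epsilon,g)$, that the map on generators is straightforward, and that the difficulty is showing it commutes with the boundary maps, ``which requires the obstruction bundle gluing from broken trajectories of the pair $(f,g)$ to smooth trajectories of the perturbed pair $(f^\epsilon,g)$.'' They then deliberately sidestep this: the gluing they actually carry out (Theorem~\ref{thm: gluing} and Theorem~\ref{thm: gluing end goal}) glues broken $(f,g)$-trajectories to smooth $(f,g)$-trajectories for the \emph{same} Morse function, with no reference to $f^\epsilon$. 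The earlier ``geometric explanation'' subsection likewise states that arguments of this flavor ``rely on gluing results that will be developed in an upcoming paper, so the claims are conjectural.''

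The gap in your Step~2 is therefore not a matter of sign bookkeeping, as your ``main obstacle'' paragraph suggests. The missing ingredient is a genuine theorem: a compactness statement showing that as $\epsilon\to 0$, index-one $f^\epsilon$-trajectories degenerate onto gluable simple broken $f$-trajectories of the precise types appearing in $\boldsymbol{\p}$, together with a gluing theorem in the opposite direction producing, for each such broken $f$-trajectory and each small $\epsilon>0$, a unique nearby $f^\epsilon$-trajectory. Neither \cite{hutchings2009gluing} (which treats $J$-holomorphic curves) nor the Morse-theoretic gluing within \cite{bao2024morse} or within this paper establishes such a cross-function gluing with a deformation parameter $\epsilon$; citing them here does not close the gap. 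You also rely on an affirmative answer to Question~\ref{question: name of stably ms}, itself left open. Your Step~1 is fine and is essentially what the paper envisions. To upgrade this from a plan to a proof you would need to state and prove the $f\to f^\epsilon$ gluing/compactness theorem (controlling the local behavior near each perturbed critical point, ruling out unexpected breaking, and tracking orientations) --- this is precisely the content the authors defer to future work, which is why the statement remains a conjecture in the paper.
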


\cb 

\subsection{Borel equivariant Morse homology}\label{section: equivariant morse homology}
In this section, we recall the definition of Borel's construction of equivariant homology. For other types of equivariant homology, see \cite{bao2024morse}.
Let $(C, d)$ be a chain complex over the ring $R$ with a finite group $G$-action, which means $G$ preserves the grading of $C$ and commutes with $d$.

Let $(P, d^P)$ be a projective resolution of $R$ over $R[G]$. We denote
$$E_{i,j} \coloneqq P_{i} \otimes_{R[G]} C_j,$$
for $i, j \in \N$, where $P_{i}$ and $C_j$ are regarded as $R[G]$-modules. Let $d^<_{i,j}: E_{i,j} \to E_{i-1,j}$ be the map induced by $d^{P}_{i}: P_{i} \to P_{i-1}$, and $d^\vee _{i,j}: E_{i,j} \to E_{i,j-1}$ be the map induced by $d_j: C_j \to C_{j-1}$ multiplied by the factor $(-1)^i$. Then, $d^<_{i,j}$ and $d^\vee_{i,j}$ commute with the multiplication by elements in $R[G]$ and form a double complex.

We define the chain complex $(C^G, d^G)$ by $C^G = \oplus_k C^G_k$, where 
$$C_k^G  = \oplus_{i+j = k} E_{i,j}\quad \text{ and } \quad d^G|_{E_{i,j}}  = d^<_{i,j} + d^\vee_{i,j}.$$
The corresponding $G$-equivariant homology group is:
$$H^G= \ker d^G / \op{Im} d^G.$$

\cbu 
\begin{conjecture}
When $(C,d) = (\boldsymbol{C}, \boldsymbol{\p})$ as in Section~\ref{section: equivariant chain complex}, $H^G$ computes the Borel homology $H_{\text{Borel}}^G(M; \Z) = H(M \times_G EG)$.
\end{conjecture}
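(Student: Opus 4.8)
The plan is to reduce the statement to the classical identification of Borel homology with the hyperhomology of the group $G$ with coefficients in a chain-level model for $M$, and then to supply an equivariant chain comparison between $(\boldsymbol{C},\boldsymbol{\p})$ and such a model. Write $\mathbb{H}_*(G;-)$ for the hyperhomology of $G$, i.e.\ the homology of the total complex of $P_\bullet\otimes_{\Z[G]}(-)$ for a projective resolution $P_\bullet\to\Z$ of $\Z[G]$-modules; the comparison-of-resolutions argument shows this is independent of $P_\bullet$, and by construction (and Theorem~\ref{thm: bold d square is zero}) one has $H^G=\mathbb{H}_*(G;\boldsymbol{C})$. The first and main step is to produce, after fixing a $G$-CW structure on $M$ (which exists because $G$ is finite), a $\Z[G]$-equivariant quasi-isomorphism — or a zig-zag of such —
$$(\boldsymbol{C},\boldsymbol{\p})\ \simeq\ C_*^{CW}(M;\Z).$$
Granting the conjectural $\Z[G]$-isomorphism $(\boldsymbol{C},\boldsymbol{\p})\cong(C^\epsilon,\p^\epsilon)$ with the Thom-Smale-Witten complex of the Morse-Smale pair $(f^\epsilon,g)$ — which requires a positive answer to Question~\ref{question: name of stably ms} together with the gluing results — this reduces to the standard fact that the Morse complex of a $G$-equivariant Morse-Smale pair is equivariantly quasi-isomorphic to $C_*^{CW}(M)$ (for instance by filtering both complexes by sublevel sets of $f^\epsilon$, respectively by skeleta, and comparing the $G$-equivariant spectral sequences). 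Here one must check that the sign twist $\sigma(\phi,\cP{p})$ defining the $G$-action on $\boldsymbol{C}$ matches the geometric permutation action on cells; this is exactly the orientation bookkeeping already carried out in the proof that the $G$-action commutes with $\boldsymbol{\p}$.

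The second step is formal: each $P_i$ is projective, hence flat, over $\Z[G]$, so $P_i\otimes_{\Z[G]}(-)$ is exact and carries the equivalence of the first step to a levelwise (in the $P$-direction) quasi-isomorphism of double complexes $P_\bullet\otimes_{\Z[G]}\boldsymbol{C}\to P_\bullet\otimes_{\Z[G]}C_*^{CW}(M)$, through the same zig-zag. Since $P_\bullet$ is bounded below and $\boldsymbol{C}$, $C_*^{CW}(M)$ are bounded, the spectral sequences of the two total complexes converge, and the comparison theorem yields $\mathbb{H}_*(G;\boldsymbol{C})\cong\mathbb{H}_*(G;C_*^{CW}(M))$.

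The third step identifies the right-hand side with Borel homology. Take $P_\bullet=C_*^{CW}(EG)$ for a free contractible $G$-CW complex $EG$; this is a free resolution of $\Z$ over $\Z[G]$. Then $G$ acts freely on $EG\times M$, and the equivariant cellular Eilenberg-Zilber theorem gives
$$C_*^{CW}\big((EG\times M)/G\big)\ \cong\ C_*^{CW}(EG\times M)\otimes_{\Z[G]}\Z\ \simeq\ \big(C_*^{CW}(EG)\otimes_\Z C_*^{CW}(M)\big)\otimes_{\Z[G]}\Z\ =\ P_\bullet\otimes_{\Z[G]}C_*^{CW}(M),$$
whose homology is $\mathbb{H}_*(G;C_*^{CW}(M))$ on the one hand and $H_*(EG\times_G M;\Z)=H^G_{\mathrm{Borel}}(M;\Z)$ on the other. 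Chaining the three steps gives $H^G\cong H^G_{\mathrm{Borel}}(M;\Z)$, as claimed.

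The hard part is the first step. The paper only \emph{conjectures} the $\Z[G]$-identification of $(\boldsymbol{C},\boldsymbol{\p})$ with $(C^\epsilon,\p^\epsilon)$, and that route rests on the obstruction-bundle gluing for $f^\epsilon$ (developed only in an upcoming paper) and on Question~\ref{question: name of stably ms}. A self-contained argument would instead construct an explicit equivariant chain homotopy equivalence $\boldsymbol{C}\simeq C_*^{CW}(M)$ directly — e.g.\ from an equivariant handle decomposition subordinate to $f$ and to $M^G$, carefully tracking the normal vectors $\mathfrak o_{\cP{p}}$ and the signs $\sigma$ — or run an equivariant continuation-map comparison between $(f,g)$ and an auxiliary equivariant Morse-Smale pair. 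Once this is available, the remaining steps are routine homological algebra, and the same construction also yields the preceding conjecture that $H(\boldsymbol{C},\boldsymbol{\p})\cong H_*(M)$ as $\Z[G]$-modules.
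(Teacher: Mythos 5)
The statement you were asked to prove is stated in the paper as a \emph{conjecture} with no accompanying proof, so there is no argument in the paper against which to compare yours. Your proposal is, as you yourself acknowledge, also not a proof: the entire content is concentrated in your Step~1, the existence of a $\Z[G]$-quasi-isomorphism (or zig-zag of such) between $(\boldsymbol{C},\boldsymbol{\p})$ and $C_*^{CW}(M;\Z)$, and you leave this step conditional on the paper's other conjecture (that $(\boldsymbol{C},\boldsymbol{\p})\cong(C^\epsilon,\p^\epsilon)$ as $\Z[G]$-complexes), on a positive answer to Question~\ref{question: name of stably ms}, and on gluing results deferred to an upcoming paper. Reducing one conjecture to another conjecture plus an open question is a useful clarification of the logical structure, but it is not a proof.

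The formal parts of your argument, Steps~2 and~3, are correct and standard. Projective implies flat over $\Z[G]$, so $P_i\otimes_{\Z[G]}(-)$ preserves quasi-isomorphisms (acyclicity of the mapping cone is preserved by an exact functor); boundedness of $\boldsymbol{C}$, $C_*^{CW}(M)$ and bounded-belowness of $P_\bullet$ give convergence of the total-complex spectral sequences; and taking $P_\bullet=C_*^{CW}(EG)$ together with the equivariant cellular Eilenberg--Zilber theorem for the free diagonal $G$-action on $EG\times M$ identifies the resulting hyperhomology with $H_*(EG\times_G M;\Z)$. The only caution worth flagging there is that you need a $G$-CW structure on $M$ compatible with the product construction, which is supplied by Illman's theorem for smooth actions of finite groups. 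But none of this touches the geometric content. If you wanted a self-contained argument, the route you gesture at in your final paragraph --- an equivariant handle decomposition subordinate to $f$ and to $M^G$, or an equivariant continuation comparison with an auxiliary Morse--Smale pair, carefully tracking the normal vectors $\mathfrak{o}_{\cP{p}}$ and the orientation signs $\sigma$ --- is the right kind of thing, but it is precisely the work the paper has not carried out, and carrying it out would constitute the actual proof of the conjecture rather than a reduction of it.
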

\cb

In the case when $R = \Z$, and $G = \langle \phi \rangle$, then $R[G] = \Z \langle \mathbbm 1, \phi \rangle$. We can take the projective resolution as follows:
\[
\dots \xrightarrow{d^P_3} \Z[G] \xrightarrow{d^P_2} \Z[G] \xrightarrow{d^P_1} \Z[G] \xrightarrow{d^P_0} \Z \to 0.
\]
The $\Z[G]$-linear differentials are defined by
$d^P_0(\mathbbm{1}) = 1$ and $d^P_i(\mathbbm{1}) = \mathbbm{1} + (-1)^{i+1} \phi$, for $i \geq 1$. 
Then the double complex $E_{i,j}$ becomes:
\begin{equation*}
    \begin{tikzcd}
            & \vdots \arrow[d, "d_2"] & \vdots \arrow[d, "\cbu - d_2"] \\
        0  & C_1 \arrow[l] \arrow[d, "d_1"] & C_1 \arrow[l, "\mathbbm 1 - \phi"] \arrow[d, "\cbu -d_1"] & \arrow[l, "\mathbbm 1 + \phi"] \dots \\
        0  & C_0 \arrow[l]\arrow[d] & C_0 \arrow[l, "\mathbbm 1 - \phi"] \arrow[d] & \arrow[l, "\mathbbm 1 + \phi"] \dots . \\
            & 0 & 0
    \end{tikzcd}
\end{equation*}

\cbu We now compute the equivariant homology of Examples~\ref{example: torus} and~\ref{example: klein bottle}. 

\begin{example}[Upright torus]Take the chain complex $(\boldsymbol{C}, \boldsymbol{\p})$ as in Example~\ref{example: torus}. Then we have that \[H_0^G(T^2; \Z)=\Z, \quad\quad H_1^G(T^2; \Z)=\Z \oplus \Z_2^2, \quad\quad H_k^{G}(T^2; \Z)=\Z_2^2, \text{ for all } k\geq 2.\]
\end{example}

\begin{example}[Klein bottle]Consider the chain complex $(\boldsymbol{C}, \boldsymbol{\p})$ as in Example~\ref{example: klein bottle}. Then we have that \[H_0^G(K; \Z)=\Z, \quad\quad H_1^G(K; \Z)=\Z \oplus \Z_2^2, \quad\quad H_k^{G}(K; \Z)=\Z_2^2, \text{ for all } k\geq 2.\]
\end{example}

\section{Proof of d square is zero}\label{section: d square is zero}
In this section, we prove Proposition~\ref{prop: fatD square is zero} and Theorem~\ref{thm: bold d square is zero}. We focus on Theorem~\ref{thm: bold d square is zero}, as Proposition~\ref{prop: fatD square is zero} follows as a consequence. \cbu One would ideally like to construct an isomorphism \cb between $(\boldsymbol{C}, \boldsymbol{\p})$ and the Thom-Smale-Witten complex of the pair $(f^\epsilon, g)$, \cbu but this is difficult for the following reason: While the map on generators is straightforward, it is challenging to show that this map commutes with the boundary maps, which requires the \cb obstruction bundle gluing from broken trajectories of the pair $(f,g)$ to smooth trajectories of the perturbed pair $(f^\epsilon, g)$. In this note, we provide a more brute-force proof using the standard obstruction bundle gluing method without reference to $f^\epsilon$.

\subsection{The statement of obstruction bundle gluing} 
We first review the Fredholm setup. For any critical points $\cP{p}, \cP{q} \in \crit(f)$, define the Banach manifold 
\[
    \widetilde{\mathcal B}(\cP{p}, \cP{q}) = \{\gamma \in L_1^2(\R, M) ~|~ \lim_{\vt \to -\infty} \gamma(\vt) = \cP{p}, \lim_{\vt\to \infty} \gamma(\vt) = \cP{q} \},
\]
where $L_1^2(\R, M)$ is defined by embedding $M$ into $\R^k$ for some large $k$.
We define the Banach bundle $\widetilde{\mathcal E} \to \widetilde{\mathcal B}$ by 
$\widetilde{\mathcal E}_\gamma = L^2(\gamma^* TM)$. Denote by $\mathcal L: \widetilde{\mathcal B} \to \widetilde{\mathcal E}$ the section defined by 
\[
\mathcal L \gamma = \frac{d}{d\vt}\gamma + \op{grad}_{f,g}(\gamma).
\]
\begin{lemma}[Lemma 2.10 \cite{bao2024computable}]
    The section $\mathcal L$ intersects the zero section cleanly if and only if the descending manifold of $\cP{p}$ intersects the ascending manifold of $\cP{q}$ cleanly.
\end{lemma}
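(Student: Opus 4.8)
The strategy is to transport everything across the evaluation map $\ev_0\colon \mathcal L^{-1}(0)\to M$, $\gamma\mapsto\gamma(\vt)|_{\vt=0}$, and deduce the lemma as a formal consequence of the definition of clean intersection of a Fredholm section with the zero section. Concretely, $\mathcal L^{-1}(0)$ is the set of negative gradient trajectories from $\cP{p}$ to $\cP{q}$, and $\ev_0$ will be a bijection onto $\D_{\cP{p}}\cap\A_{\cP{q}}$ which identifies the local smooth structures on the two sides as well as the relevant tangent spaces. This generalizes the familiar fact that, when $\D_{\cP{p}}$ and $\A_{\cP{q}}$ meet transversely, $\ev_0$ identifies the parametrized moduli space $\widetilde{\mathcal M}(\cP{p},\cP{q})=\mathcal L^{-1}(0)$ with $\D_{\cP{p}}\cap\A_{\cP{q}}$, and $\mathcal L$ is transverse to the zero section precisely when this intersection is transverse.

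First I would set up the two identifications. Since $\mathcal L\gamma=0$ forces $\dot\gamma=-\op{grad}_{f,g}(\gamma)$ and trajectories are determined by a single value by uniqueness for this ODE, $\ev_0$ is injective; the asymptotic conditions $\lim_{\vt\to-\infty}\gamma=\cP{p}$ and $\lim_{\vt\to+\infty}\gamma=\cP{q}$ are equivalent to $\gamma(0)\in\D_{\cP{p}}$ and $\gamma(0)\in\A_{\cP{q}}$, so $\ev_0(\mathcal L^{-1}(0))=\D_{\cP{p}}\cap\A_{\cP{q}}$; and its inverse $z\mapsto(\vt\mapsto\psi_\vt(z))$, with $\psi$ the negative gradient flow, is smooth as a map into $\widetilde{\mathcal B}$ thanks to the uniform exponential-convergence estimates at the hyperbolic rest points $\cP{p},\cP{q}$. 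On the linear level, at a zero $\gamma$ the vertical linearization $D\mathcal L_\gamma\xi=\nabla_\vt\xi+\nabla_\xi\op{grad}_{f,g}$ is Fredholm on the $L^2_1\to L^2$ completion because $\op{Hess}_f$ is invertible at $\cP{p}$ and $\cP{q}$; its kernel consists of vector fields along $\gamma$ solving the linearized-flow equation and decaying at both ends, and under $d(\ev_0)_\gamma=(\xi\mapsto\xi(0))$ the subspace decaying at $-\infty$ is $T_{\gamma(0)}\D_{\cP{p}}$ while the subspace decaying at $+\infty$ is $T_{\gamma(0)}\A_{\cP{q}}$, so $\ker D\mathcal L_\gamma\cong T_{\gamma(0)}\D_{\cP{p}}\cap T_{\gamma(0)}\A_{\cP{q}}$ compatibly with $\ev_0$.

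Then I would assemble via the local Fredholm (Kuranishi) model. Near each zero $\gamma$, choosing a complement $W$ of $\ker D\mathcal L_\gamma$ in $T_\gamma\widetilde{\mathcal B}$ and a finite-dimensional complement $R\cong\coker D\mathcal L_\gamma$ of $\op{im}D\mathcal L_\gamma$, the implicit function theorem applied in the $\op{im}D\mathcal L_\gamma$-directions realizes $\mathcal L^{-1}(0)$ near $\gamma$ as $\kappa^{-1}(0)$ for a map $\kappa\colon U\to R$ on an open $U\subset\ker D\mathcal L_\gamma$ with $d\kappa_\gamma=0$, and moreover $T_\gamma\mathcal L^{-1}(0)=T_0\kappa^{-1}(0)\subset\ker D\mathcal L_\gamma$. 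By definition $\mathcal L$ meets the zero section cleanly iff at every zero $\mathcal L^{-1}(0)$ is a submanifold with $T_\gamma\mathcal L^{-1}(0)=\ker D\mathcal L_\gamma$, equivalently iff $\kappa\equiv 0$ near each zero (so $\kappa^{-1}(0)$ is open in $\ker D\mathcal L_\gamma$) and $\dim\coker D\mathcal L_\gamma$ is locally constant. Pushing this forward by $\ev_0$ and using the kernel identification, this reads verbatim as: $\D_{\cP{p}}\cap\A_{\cP{q}}$ is a submanifold of $M$ with $T_z(\D_{\cP{p}}\cap\A_{\cP{q}})=T_z\D_{\cP{p}}\cap T_z\A_{\cP{q}}$ at every $z$, i.e. $\D_{\cP{p}}$ and $\A_{\cP{q}}$ intersect cleanly. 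Applying the equivalence at every zero finishes the proof.

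The step I expect to take the most care is the first one: making precise the sense in which $\ev_0$ is a diffeomorphism, because $\D_{\cP{p}}$ and $\A_{\cP{q}}$ are in general only injectively immersed submanifolds of $M$, so "submanifold" and "clean intersection" must be read in the immersed category and one must check that the flow-out map $z\mapsto(\vt\mapsto\psi_\vt(z))$ is a smooth embedding of that immersed submanifold into $\widetilde{\mathcal B}$ — this is exactly where the uniform exponential-decay estimates near $\cP{p}$ and $\cP{q}$ are essential. Once that compatibility is secured, the Fredholm/Kuranishi bookkeeping in the last step is routine.
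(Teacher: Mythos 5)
The paper does not prove this lemma---it is cited from \cite{bao2024computable}---so there is no in-paper argument to compare against. Your proof is correct, and it follows what is almost certainly the intended argument: use the evaluation map $\ev_0$ to identify the zero set $\mathcal L^{-1}(0)$ with $\D_{\cP{p}}\cap\A_{\cP{q}}$ and, via exponential decay at the hyperbolic ends, to identify $\ker D_\gamma$ with $T_{\gamma(0)}\D_{\cP{p}}\cap T_{\gamma(0)}\A_{\cP{q}}$; then the local Fredholm/Kuranishi model makes the clean-intersection condition on the section side literally equivalent to the clean-intersection condition on the geometric side. Two minor polishing remarks. First, the clause ``and $\dim\coker D\mathcal L_\gamma$ is locally constant'' is redundant once $\kappa\equiv 0$ near each zero: the Fredholm index $\ind(\cP{p})-\ind(\cP{q})$ is constant on each component, so constancy of $\dim\ker$ already forces constancy of $\dim\coker$. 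Second, the point you flag at the end is real and worth making explicit in any write-up: on a closed manifold $\D_{\cP{p}}$ and $\A_{\cP{q}}$ are only injectively immersed, so both the geometric ``clean intersection'' and the conclusion that $\ev_0$ is a diffeomorphism must be read locally (near a given trajectory), with the global statement then assembled from the local ones; the exponential-decay estimates at $\cP{p}$ and $\cP{q}$ are exactly what make the flow-out map $z\mapsto(\vt\mapsto\psi_\vt(z))$ land smoothly in $\widetilde{\mathcal B}$ and serve as a local inverse to $\ev_0$.
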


For any $\gamma \in \lm = \mathcal L^{-1}(0)$, denote the linearization of $\mathcal L$ at $\gamma$ by $D_\gamma$.
It is well-known that $D_\gamma: L_1^2(\gamma^* TM) \to L^2(\gamma^* TM)$ is a Fredholm operator of index $\ind(\cP{p}) - \ind(\cP{q})$ given by
\[
D_\gamma \xi = \nabla_\vt \xi + \nabla_\xi \op{grad}_{f,g}(\gamma),
\]for $\xi \in L^2_1(\gamma^{*}TM).$
Let $D_\gamma^*: L_{-1}^2(\gamma^* TM) \to L^2(\gamma^* TM)$ be the $L^2$-adjoint operator of $D_\gamma$ given by 
\[
D_\gamma^* \eta = -\nabla_\vt \eta + \nabla_\eta \op{grad}_{f,g}(\gamma),
\]for $\eta \in L^2_{-1}(\gamma^{*}TM).$
We write $\widetilde{\obstructionBundle} \to \widetilde{\lm}$ to be the obstruction bundle, where $\widetilde{\obstructionBundle}_\gamma = \ker D_\gamma^*$. It is easy to see that 
\[
    \widetilde \obstructionBundle_\gamma  \oplus \op{im}D_\gamma = \widetilde{\mathcal E}_\gamma.
\]

The group $\R$ acts on $\widetilde{\mathcal E} \to \widetilde{\mathcal B}$ by translating the domain. We denote the quotient bundle as $\mathcal E \to \mathcal B$. Since $\mathcal L$ is invariant under the $\R$-action, $\mathcal L$ descends to a section of $\mathcal E \to \mathcal B$, and is still denoted by $\mathcal L$.
We write $\obstructionBundle$ to be the bundle obtained by quotienting $\widetilde{\obstructionBundle}$ by $\R$.

It is well-known that $\lm$ can be compactified by adding broken trajectories. We now study which broken trajectories appear in $\p \lm = \overline{\lm} \backslash \lm$ in the case when $\ind(\cP{p}) - \ind(\cP{q}) = 2$.

\begin{definition}[Broken trajectory]
    A \emph{broken trajectory} from $\cP{p}$ to $\cP{q}$ consists of $$([u_1], \dots, [u_m]) \in \lm(\cP{p}_{0}, \cP{p}_1) \times \dots \times \lm(\cP{p}_{m-1}, \cP{p}_m),$$
    for some $m > 1$,
    where $\cP{p}_0, \dots, \cP{p}_m$ are critical points in $M$ with $\cP{p}_0 = \cP{p}$ and $\cP{p}_m = \cP{q}$.
\end{definition}

\begin{definition}[Simple]\label{def: simple}
    We say a broken trajectory $([u_1], \dots, [u_m])$ is \emph{simple} if it satisfies:
    \begin{enumerate}[label=(A\arabic*)]
        \item $\ind(\cP{p}_{i-1}) - \ind(\cP{p}_i) \in \{0,1\}$.
        \item $[u_i]$ is isolated in $\lm(\cP{p}_{i-1}, \cP{p}_i)$ and $[u_i]$ is cleanly cut out.
        \item \label{condition: special assumption} For any $i \in \{1, \dots, m\}$ such that $\ind(\cP{p}_{i-1}) - \ind(\cP{p}_i) = 0$, there exists a codimension one submanifold $F_i \subset M$ such that:
        \begin{enumerate}[label=(\roman*)]
            \item $F_i$ is tangent to $\op{grad}_{f,g}$.
            \item $u_i$ is contained in $F_i$.
            \item For any $u_j \in \lm(\cP{p}_{j-1}, \cP{p}_j)$ with $j = i-1, i+1$, $u_j$ is not contained in $F_i$.
        \end{enumerate}
    \end{enumerate}
\end{definition}

Note that Condition~\ref{condition: special assumption} implies that we can define the asymptotic behavior of $u_{i-1}$ and $u_{i+1}$ in the normal direction of $F_i$. In particular, we define
\[
u_{i-1}^\infty = \lim_{\vt\to \infty} {n_{i-1}(\vt)}/{|n_{i-1}(\vt)|} \in T_{\cP{p}_{i-1}}M
\]
and
\[
u_{i+1}^{-\infty} = \lim_{\vt\to -\infty} {n_{i+1}(\vt)}/{|n_{i+1}(\vt)|} \in T_{\cP{p}_i}M
\]
as before, where $n_{i-1}$ and $n_i$ are the components of $u_{i-1}$ and $u_i$ in the normal direction of $F_i$.

In the proof of Theorem~\ref{thm: bold d square is zero}, no consecutive moduli spaces are obstructed. However, in Definition~\ref{def: simple} we include the slightly general setup for the application in Section~\ref{section: beyond reflection}. 

\begin{definition}[Gluable] \label{def: gluable}
    We say a simple broken trajectory $([u_1], \dots, [u_m])$ is \emph{gluable} if for any $i$ such that $\ind(\cP{p}_{i-1}) - \ind (\cP{p}_i) = 0$, we have $1 < i < m$ and 
    \[
    \langle u_{i-1}^\infty, \xi^{-\infty} \rangle \cdot \langle \xi^\infty, u_{i+1}^{-\infty} \rangle > 0,
    \]
    where $\xi$ is a nonzero element in $\ker D_{u_i}^*$, i.e., it satisfies Equation~\eqref{eqn: kernel of D*}.
\end{definition}

To orient the moduli spaces, we fix an orientation $\orientation{\mathscr D_{\cP{p}}}$ for the descending manifold $\mathscr D_{\cP{p}}$ of each critical point $\cP{p}$. For each $i$ as in Condition~\ref{condition: special assumption}, we choose a unit vector $\mathfrak o_{\cP{p}_i}$ orthogonal to $F_i$, and we also choose orientations $\orientation{\mathscr D_{\cP{p}_i} \cap F_i}$ for the descending manifolds of $\cP{p}_i$ such that 
\[
\orientation{\mathscr D_{\cP{p}_i}} = \mathfrak o_{\cP{p}_i} \oplus \orientation{\mathscr D_{\cP{p}_i} \cap F_i}.
\]
Then, for each gluable broken trajectory $([u_1], \dots, [u_m])$, we can associate an number in $\{1, -1\}$ by $\orientation{u_1} \cdots \orientation{u_m}$. Each $\orientation{u_i}$ is assigned using the orientations of the descending manifolds in $M$ if $\ind(u_i) = \ind(\cP{p}_{i-1}) - \ind(\cP{p}_i) = 1$, and the orientations of the descending manifolds in $F_i$ if $\ind(u_i) = \ind(\cP{p}_{i-1}) - \ind(\cP{p}_i) = 0$.

\begin{remark}
    In the case where we have a manifold $N$ with boundary $\p N$, we glue $N$ with itself to obtain $M \coloneqq  N \cup \overline{N}$, and we choose orientations $\mathfrak o_\cP{p} \in T_\cP{p} \p N$ for each critical point $\cP{p} \in \mathcal N$ such that they all point outwards. Here, a simple broken trajectory consists of at most three pieces, i.e., $m = 2, 3$, by an index calculation, and all the simple broken trajectories in $N$ are gluable. The following theorem is a generalization of Lemma 2.4.3 in \cite{kronheimer2007Monopoles}.
\end{remark}

\begin{theorem}[Gluable trajectories]\label{thm: gluing end goal}
    Assume the following conditions:
    \be
    \item Critical points $\cP{p}$ and $\cP{q}$ are two critical points of $f$ such that $\ind \cP{p} - \ind \cP{q} = 2$.
    \item The moduli space $\lm (\cP{p}, \cP{q})$ is transversely cut out.
    \item All the broken trajectories from $\cP{p}$ to $\cP{q}$ are simple.
    \ee 
    Then the moduli space $\lm(\cP{p}, \cP{q})$ is an oriented one-dimensional manifold. 
    \cbu The boundary of its compactification is cobordant to the space of \cb all the gluable simple broken trajectories from $\cP{p}$ to $\cP{q}$ with the sign correction
    $$(-1)^m \prod_{i: \ind(\cP{p}_{i-1}) = \ind(\cP{p}_i)}   \langle u^{-\infty}_{i+1},\mathfrak o_{\cP{p}_{i}} \rangle,$$
    for each broken trajectory $([u_1], \dots, [u_m])$.
\end{theorem}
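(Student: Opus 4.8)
The plan is to combine Morse-theoretic compactness with the obstruction bundle gluing of \cite{hutchings2009gluing}, in three stages. First I would orient $\lm(\cP{p},\cP{q})$ — a $1$-manifold by the transversality hypothesis — using the chosen orientations of the descending manifolds and the usual determinant-line convention. Morse compactness identifies $\overline{\lm}(\cP{p},\cP{q})\setminus\lm(\cP{p},\cP{q})$ with a set of broken trajectories $([u_1],\dots,[u_m])$, each simple by hypothesis; since $\ind\cP{p}-\ind\cP{q}=2$ and every factor drops index by $0$ or $1$ by (A1), exactly two factors drop by $1$ and the remaining $m-2$ drop by $0$. For each index-drop-$0$ factor $u_i$, Condition (A2) forces $\ker D_{u_i}$ and $\ker D_{u_i}^{\ast}$ to be one-dimensional; I would fix a generator $\xi_i$ of $\ker D_{u_i}^{\ast}$ (a solution of Equation~\eqref{eqn: kernel of D*}), which as in Lemma~\ref{lemma: kernel of D* is normal} is a section of the normal bundle of $F_i$ along $u_i$. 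Since $F_i$ has codimension one, $\xi_i^{-\infty}$ and $\xi_i^{\infty}$ span the normal lines, and Condition~\ref{condition: special assumption}(iii) makes the pairings $\langle u_{i-1}^{\infty},\xi_i^{-\infty}\rangle$ and $\langle\xi_i^{\infty},u_{i+1}^{-\infty}\rangle$ equal to $\pm1$.

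Next, for each such broken trajectory I would preglue to obtain an approximate trajectory $u_{\mathbf T}$ with gluing lengths $\mathbf T=(T_1,\dots,T_{m-1})$ and, following \cite{hutchings2009gluing}, identify the nearby part of $\widetilde{\lm}(\cP{p},\cP{q})$ with the zero locus of an obstruction section $\obstructionSectionS=(\obstructionSectionS_i)$ valued in $\bigoplus_i\ker D_{u_i}^{\ast}$. The crux is the leading-order expansion, for each obstructed $i$,
\[
\obstructionSectionS_i(\mathbf T)=c_i^{-}\langle u_{i-1}^{\infty},\xi_i^{-\infty}\rangle\, e^{-\lambda_{i-1}T_{i-1}}-c_i^{+}\langle\xi_i^{\infty},u_{i+1}^{-\infty}\rangle\, e^{-\mu_i T_i}+(\text{higher order}),
\]
with $c_i^{\pm}>0$ and positive spectral gaps $\lambda_{i-1},\mu_i$ (when a neighbor of $u_i$ is itself obstructed the analogous expansion gives a coupled exponential system still solved by the same estimates). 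Thus $\obstructionSectionS$ has a zero for large $\mathbf T$ exactly when the two pairings agree in sign for every obstructed $i$, which is precisely Definition~\ref{def: gluable}; when this holds the zero locus is a single embedded ray converging to the broken trajectory, and the gluing theorem of \cite{hutchings2009gluing} promotes this to a collar $[0,\varepsilon)\hookrightarrow\overline{\lm}(\cP{p},\cP{q})$ sending $0$ to it, while non-gluable broken trajectories are not limits of honest trajectories. Hence $\overline{\lm}(\cP{p},\cP{q})$ is a compact oriented $1$-manifold whose boundary is the set of gluable simple broken trajectories from $\cP{p}$ to $\cP{q}$.

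It remains to compare the boundary orientation of $\overline{\lm}(\cP{p},\cP{q})$ at such a point with the product sign $\orientation{u_1}\cdots\orientation{u_m}$; I would do this by tracking determinant lines through the linear gluing, writing $\det D_{u_{\mathbf T}}$ as the tensor product of the $\det D_{u_i}$ with the gluing-parameter lines and re-expressing everything via the chosen orientations of the $\mathscr D_{\cP{p}_i}$ — for obstructed $i$ via the splitting $\orientation{\mathscr D_{\cP{p}_i}}=\mathfrak o_{\cP{p}_i}\oplus\orientation{\mathscr D_{\cP{p}_i}\cap F_i}$. Permuting the $\R$-translation lines contributes a global $(-1)^m$, and each obstructed $i$ contributes $\langle u_{i+1}^{-\infty},\mathfrak o_{\cP{p}_i}\rangle$, recording which side of $F_i$ the glued trajectory passes $\cP{p}_i$ on, giving the asserted correction $(-1)^m\prod_{i:\,\ind\cP{p}_{i-1}=\ind\cP{p}_i}\langle u_{i+1}^{-\infty},\mathfrak o_{\cP{p}_i}\rangle$. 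The main obstacle is the second stage: pinning down the signs of the constants $c_i^{\pm}$ in the leading-order expansion so that ``$\obstructionSectionS$ has a zero'' matches Definition~\ref{def: gluable} on the nose, and then the orientation bookkeeping of the third stage, which is conceptually routine but is exactly where the precise correction factor must be extracted.
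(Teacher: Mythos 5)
Your proposal matches the paper's proof: obstruction bundle gluing with the linearized obstruction section $\obstructionSectionS_0$ (your ``leading-order expansion'') selects exactly the gluable simple broken trajectories as boundary points of $\overline{\lm}(\cP{p},\cP{q})$, and the sign correction is then extracted from the change of basis between the $m$ translation lines $\frac{du_i}{d\vt}$ and the lines $\R\langle\frac{du}{d\vt}\rangle$, the boundary normal, and $\R\langle\mathfrak o_{\cP{p}_i}\rangle$, exactly as in the paper. The one quibble is that the $(-1)^m$ is not literally a ``permutation of the $\R$-translation lines'' but the sign of a specific $m\times m$ change-of-basis determinant (which the paper computes for $m=3$); your heuristic nevertheless lands on the same answer.
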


The proof of Theorem~\ref{thm: gluing end goal} is postponed to the next subsection. Assuming this, we prove Theorem~\ref{thm: bold d square is zero}.
\begin{proof}[Sketch of proof of Theorem~\ref{thm: bold d square is zero}]
    As a matrix, $\boldsymbol{\p}^2$ is a $(5 \times 5)$ matrix. Some of the terms are automatically zero. Checking the remaining terms being zero is quite similar: they all count gluable broken trajectories, and we demonstrate this in the following case:

    \emph{Case $(\boldsymbol{\p}^2)_{2,1} = 0$}:
    For any $\cP{p} \in \crit^o(f)$ and $\cP{s} \in \crit^s(f)$, with $\ind (\cP{p}) - \ind (\cP{s}) = 2$, we consider the moduli space $\lm = \m{p}{s}$. Then $\lm$ is a smooth, oriented, one-dimensional manifold. $\lm$ can be compactified by adding broken trajectories of the following types:
    \be 
        \item $\m{p}{w} \times \m{w}{s}$, for all $\cP{w} \in \crit^o(f)$ such that $\ind(\cP{w}) = \ind(\cP{p}) - 1$;
        \item $\m{p}{w} \times \m{w}{s}$, for all $\cP{w} \in \crit^s(f)$ such that $\ind(\cP{w}) = \ind(\cP{p}) - 1$;
        \item $\m{p}{q} \times \m{q}{r} \times \m{r}{s}$, for all $\cP{q} \in \crit^s(f)$ such that $\ind(\cP{p}) - \ind(\cP{q}) = 1$, and all $\cP{r} \in \crit^u(f)$ such that $\ind(\cP{r}) = \ind(\cP{p}) - 1$.
    \ee 
    Note that the moduli space $\m{q}{r}$ in (3) consist of trajectories only inside $M^G$. Moreover, it is easy to check that no other types of broken trajectories can serve as the boundary of $\lm$. In other words, all the broken trajectories from $\cP{p}$ to $\cP{q}$ are simple. 
    
    Then, by Theorem~\ref{thm: gluing end goal}, $\lm$ can be compactified by adding gluable broken trajectories, i.e., those of type (1), type (2), and type (3) with the additional condition from Definition~\ref{def: gluable}.
    
    We then claim:
    \[
    \langle \boldsymbol{\p}^2 \cP{p}, \cP{q} \rangle = \langle (\p^o_s \p^o_o + \bar\p^s_s \p^o_s + \p^{u+}_s (P^s_u \p^o_{s+} + R^s_u \p^o_{s-}) - \p^{u-}_s (P^s_u \p^o_{s-} + R^s_u \p^o_{s+})) \cP{p}, \cP{q} \rangle 
    \] 
    counts all the gluable broken trajectories from $\cP{p}$ to $\cP{q}$. In particular, the first two terms correspond to broken trajectories of type (1) and (2), while the remaining terms correspond to broken trajectories of type (3). The minus sign in the last term is due to the sign correction. 
    
    Therefore, $\langle \boldsymbol{\p}^2 \cP{p}, \cP{q} \rangle$ counts $\p \lm$, which is $0$ as the boundary of one-dimensional manifolds.

    \end{proof}

\subsection{Details of obstruction bundle gluing}

Given a simple broken trajectory $$([u_1], \dots, [u_m])$$ and gluing parameters $T_1, \dots , T_{m-1} \gg 0$, 
we construct a pre-gluing map as follows.

Choose a sufficiently small $\varepsilon > 0$, and let $i \in \{1, \dots, m\}$. Around each critical point $\cP{p}_i$, we choose a ball of radius $\varepsilon$. We choose representatives $(u_1, \dots, u_m)$ of $([u_1], \dots, [u_m])$ such that $f(u_i(0)) = \frac{1}{2}(f(\cP{p}_{i-1}) + f(\cP{p}_{i+1}))$. We choose constants $a_i < b_i$ such that $u_i$ leaves the $\varepsilon$-ball centered at $\cP{p}_{i-1}$ at time $a_i$ and enters the $\epsilon$-ball centered at $\cP{p}_{i}$ at time $b_i$. 
We construct the domain of the glued curve as the disjoint union of the following intervals with adjacent endpoints identified:
$$(-\infty, b_1],[-2T_1, 2T_1], [a_2, b_2], \dots, [a_{m-1}, b_{m-1}],[-2T_{m-1}, 2T_{m-1}],[a_m, \infty).$$
In particular, we identify the union (with consecutive end points identified) of $[-2T_{i-1}, 2T_{i-1}], [a_i, b_i], [-2T_i, 2T_i]$ with \cbu $[a_i-4T_{i-1}, b_i + 4T_{i}]$ \cb when viewed as part of the domain of $u_i$.
Let $0 < r < \min_i T_i$ be a fixed constant.
We choose a bump function \cbu $\beta_i: [a_i-4T_{i-1}, b_i + 4T_{i}] \to [0,1]$ \cb such that
\be 
    \item $\beta_i(\vt) = 0$ for \cbu $\vt \leq a_i-3T_{i-1} - r$ or $\vt > b_i + 3T_{i} + r$; \cb
    \item $\beta_i(\vt) = 1$ for \cbu $a_i-3T_{i-1} + r < \vt < b_i + 3T_{i} - r$; \cb
    \item $|\beta'_i(\vt)| < 1/r$.
\ee  
See Figure~\ref{fig: bump function}.
Here, we use the convention that $a_1 = -\infty$ and $b_m = \infty$. 

Before we define a pregluing, we introduce an auxiliary Riemannian metric on $M$ to identify a tangent vector with a point in $M$ using the exponential map. The metric is chosen to be the Euclidean metric in the $2\varepsilon$-balls around each critical points. If $u$ is a trajectory and $\xi$ is a section of $u^* TM$, then we write $u + \xi$ to mean the image of exponential map of $\xi$ at $u$ with respect to the auxiliary metric.

\begin{figure}
\centering
    \begin{tikzpicture}[scale = 0.7]
        \draw[->] (-7, 0) -- (7, 0);
        \draw[] (-6, -1) -- (-6, 0.25);
        \draw[] (6, -1) -- (6, 0.25);
        \draw[] (-2, -1) -- (-2, 0.25);
        \draw[] (2, -1) -- (2, 0.25);

        \draw[->] (-1, -0.6) -- (-2, -0.6);
        \draw[->] (1, -0.6) -- (2, -0.6);
        \node[] at (0, -0.6) {$b_i - a_i$};

        \draw[->] (-5, -0.5) -- (-6, -0.5);
        \draw[->] (-3, -0.5) -- (-2, -0.5);
        \node[] at (-4, -0.5) {$4T_{i-1}$};

        \draw[->] (5, -0.5) -- (6, -0.5);
        \draw[->] (3, -0.5) -- (2, -0.5);
        \node[] at (4, -0.5) {$4T_{i}$};

        \draw[red] (-5.5, 0.5) .. controls (-5, 0.5) and (-5, 1.5) .. (-4.5, 1.5);
        \draw[red] (5.5, 0.5) .. controls (5, 0.5) and (5, 1.5) .. (4.5, 1.5);
        \draw[red] (-4.5, 1.5) -- (4.5, 1.5);
        \draw[red] (-7, 0.5)-- (-5.5, 0.5);
        \draw[red] (7, 0.5)-- (5.5, 0.5);
        \node[red] at (0, 1.8) {$\beta_i$};

        \draw[blue] (2.5, 0.4) .. controls (3, 0.4) and (3, 1.4) .. (3.5, 1.4);
        \draw[blue] (2, 0.4) -- (2.5, 0.4);
        \draw[blue] (3.5, 1.4) -- (7,1.4);
        \node[blue] at (6, 1.7) {$\beta_{i+1}$};

        \draw[cyan] (-2.5, 0.4) .. controls (-3, 0.4) and (-3, 1.4) .. (-3.5, 1.4);
        \draw[cyan] (-2, 0.4) -- (-2.5, 0.4);
        \draw[cyan] (-3.5, 1.4) -- (-7,1.4);
        \node[cyan] at (-6, 1.7) {$\beta_{i-1}$};
    \end{tikzpicture}
    \caption{Bump functions}
    \label{fig: bump function}
\end{figure}
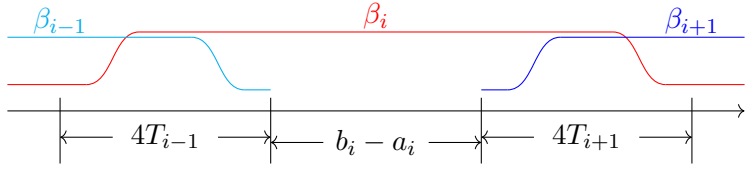
Let $\psi_i \in L^2_1(u_i^* TM)$ be orthogonal to $\ker D_{u_i}$.
Then we construct the pre-glued curve
\begin{equation}\label{eqn: u}
u = \sum_i \beta_i(u_i + \psi_i).
\end{equation}
Then $$\mathcal L u = \sum_i \beta_i \Theta_i,$$
where 
    \begin{equation}\label{eqn: Thetai}
    \Theta_i = D_{u_i} \psi_i + \frac{d \beta^+_{i-1}}{d \vt} (u_{i-1}+ \psi_{i-1}) + \frac{d \beta^-_{i+1}}{d\vt} (u_{i+1} + \psi_{i+1}).
    \end{equation}
The bump functions $\beta_i^+$ satisfy:
\be 
    \item $\beta_i^+(\vt) = \beta_i(\vt)$ for $\vt \geq a_i-3T_i + r$;
    \item $\beta_i^+(\vt) = 1$, elsewhere,
\ee  
and $\beta_i^-$ satisfies:
\be 
    \item $\beta_i^-(\vt) = \beta_i(\vt)$ for $\vt \leq b_i + 3T_{i+1} - r$;
    \item $\beta_i^-(\vt) = 1$, elsewhere.
\ee  
In particular, $\beta_i = \beta_i^- \beta_i^+$.
    
Using the convention that $a_1 = -\infty$ and $b_m = \infty$, we have
$$\Theta_1 = D_{u_1} \psi_1 +  \frac{d \beta_{2}^-}{d\vt} (u_{2} + \psi_{2})$$
and 
$$\Theta_m = D_{u_m} \psi_m + \frac{d \beta^+_{m-1}}{d \vt} (u_{m-1}+ \psi_{m-1}).$$

Let $\Pi_i: L_1^2(u_i^* TM) \to \ker D_{u_i}^*$ be the orthogonal projection for all $i=1,\ldots, m$.
\begin{lemma}
    There exist sufficiently large $r$ and $R$, such that for any $T_1, \dots, T_{m-1} > R$, the following system of equations:
    \begin{equation}\label{eqn: for psi}
    D_{u_i} \psi_i +  (1 - \Pi_i ) \left(\frac{d \beta^+_{i-1}}{d \vt} (u_{i-1}+ \psi_{i-1}) + \frac{d \beta^-_{i+1}}{d\vt} (u_{i+1} + \psi_{i+1})\right) = 0,
    \end{equation}
    for $i = 1, \dots, m$,
    has a unique solution $(\psi_1, \dots, \psi_m) \in \ker D_{u_1}^\perp \times \dots \times \ker D_{u_m}^\perp$.
\end{lemma}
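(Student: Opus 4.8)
The plan is to recast the coupled system~\eqref{eqn: for psi} as a single affine-linear fixed-point equation on a product Banach space and solve it by inverting $I+K$ for an operator $K$ of small norm. The two constants play different roles: $r$ large is what makes $\|K\|<1$, while $R$ large is only needed so that $0<r<\min_i T_i$ and the pre-gluing domain and cut-off functions of the previous subsection are defined.

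First I would, for each $i=1,\dots,m$, let $Q_i\colon \op{im}D_{u_i}\to \ker D_{u_i}^{\perp}$ be the bounded inverse of the restriction $D_{u_i}|_{\ker D_{u_i}^{\perp}}$; this exists because $D_{u_i}$ is Fredholm (so its range is closed) and is injective off its kernel, and $\|Q_i\|$ depends only on $u_i$, not on the $T$'s. Since $1-\Pi_i$ is the $L^2$-orthogonal projection onto $\op{im}D_{u_i}$ (the $L^2$-complement of $\ker D_{u_i}^{*}$), and since any solution has $\psi_i\in\ker D_{u_i}^{\perp}$, applying $Q_i$ shows that the $i$-th equation in~\eqref{eqn: for psi} is equivalent to
\[
\psi_i + Q_i(1-\Pi_i)\!\left(\tfrac{d\beta^{+}_{i-1}}{d\vt}\,\psi_{i-1}+\tfrac{d\beta^{-}_{i+1}}{d\vt}\,\psi_{i+1}\right)= -\,Q_i(1-\Pi_i)\!\left(\tfrac{d\beta^{+}_{i-1}}{d\vt}\,u_{i-1}+\tfrac{d\beta^{-}_{i+1}}{d\vt}\,u_{i+1}\right),
\]
with the evident modifications for $i=1$ and $i=m$, and all terms understood via the domain identifications set up above. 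Writing $X=\prod_{i=1}^{m}\ker D_{u_i}^{\perp}$ with the maximum of the $L^{2}_1$-norms of the factors, this is the single equation $(I+K)\psi=w$, where $K$ couples $\psi_i$ to its two neighbours through $Q_i(1-\Pi_i)\tfrac{d\beta}{d\vt}(\cdot)$ and $w\in X$ collects the right-hand sides.

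Next I would estimate $\|K\|$. The key point is that, although $\tfrac{d\beta^{+}_{i-1}}{d\vt}$ depends on the gluing parameters, it is supported in a window of length $O(r)$ with sup-norm at most $1/r$, so $\bigl\|\tfrac{d\beta^{+}_{i-1}}{d\vt}\bigr\|_{L^{2}}=O(r^{-1/2})$ \emph{uniformly} in $T_1,\dots,T_{m-1}$; combined with the one-dimensional Sobolev embedding $L^{2}_1\hookrightarrow L^{\infty}$ this gives $\bigl\|\tfrac{d\beta^{+}_{i-1}}{d\vt}\,\eta\bigr\|_{L^{2}}\le C_0\,r^{-1/2}\|\eta\|_{L^{2}_1}$ for any section $\eta$. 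Since the $Q_i$ are fixed bounded operators, it follows that $\|K\|\le C_1\,r^{-1/2}$ for a constant $C_1=C_1(u_1,\dots,u_m)$, and likewise $\|w\|=O(r^{-1/2}e^{-c\min_i T_i})$, using that the windows sit deep inside the $\varepsilon$-balls around the critical points, where the $u_i$ converge exponentially to the $\cP{p}_i$. Now fix $r$ large enough that $C_1\,r^{-1/2}<1$, and then fix $R>r$; for all $T_1,\dots,T_{m-1}>R$ the operator $I+K$ is invertible by the Neumann series, so $\psi=(I+K)^{-1}w$ is the unique solution of~\eqref{eqn: for psi} in $X$.

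The step I expect to require the most care is verifying that the bound $\|K\|=O(r^{-1/2})$ is genuinely uniform in the gluing parameters: using the explicit domain identifications one must check that each $\tfrac{d\beta^{\pm}_{j}}{d\vt}$ always acts on the neighbouring piece through a window of fixed length $O(r)$ and fixed sup-norm $O(1/r)$ however large the $T$'s are, and that the fixed inverses $Q_i$ do not re-introduce any $T$-dependence; beyond that the argument is routine functional analysis. (If one prefers to solve the honest nonlinear equation $(1-\Pi_i)\,\mathcal L\bigl(\sum_j\beta_j(u_j+\psi_j)\bigr)=0$ rather than the displayed linear system, the additional term in $\Theta_i$ is quadratic in $\psi$, and one runs the same argument as a contraction on a small ball $\{\|\psi\|\le\rho\}$ for $\rho$ small, using the standard quadratic estimate; uniqueness is then within that ball.) Finally, smooth dependence of $(\psi_1,\dots,\psi_m)$ on $T_1,\dots,T_{m-1}$ follows from the implicit function theorem applied to the smooth equation $(I+K(T))\psi=w(T)$.
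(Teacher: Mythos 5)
Your argument is essentially the same fixed-point approach the paper uses (and cites from Hutchings--Taubes, Lemma~5.6): invert the map $\psi \mapsto -D_{u_i}^{-1}(1-\Pi_i)(\,\cdots\,)$ using that the cutoff derivatives $\tfrac{d\beta^\pm}{d\vt}$ have uniformly small norm once $r$ is large, with the only cosmetic difference being that you solve the displayed (affine-linear) system by Neumann series while the paper phrases it as a contraction on a small ball. You correctly flag in your closing parenthetical that the ball restriction is the version one actually wants, since $\Theta_i$ in Equation~\eqref{eqn: Thetai} implicitly carries quadratic error in $\psi$ from linearizing $\mathcal L$; your explicit $\|K\|=O(r^{-1/2})$ estimate via the support/size of $\beta_i'$ and the $1$D Sobolev embedding is the content the paper leaves to the reference.
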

\begin{proof}
    This is essentially the same as Lemma 5.6 in \cite{hutchings2009gluing}. One can show that by fixing $||(\psi_1, \dots, \psi_{m})||_{L^2_1} \leq C$ for some small constant $C$, the map 
    \begin{align*}
    (\psi_1, \dots, \psi_{m}) \mapsto &  (-D_{u_1}^{-1} (1 - \Pi_1 )  \frac{d \beta^-_{2}}{d\vt} (u_{2} + \psi_{2}),\\
    & \dots \\
    &  -D_{u_m}^{-1} (1 - \Pi_m )  \frac{d \beta^+_{m-1}}{d \vt} (u_{m-1}+ \psi_{m-1}))
    \end{align*}
    is a contraction map, and hence has a unique solution,
    where $D_{u_i}^{-1}$ is the unique pre-image that is perpendicular to $\ker D_{u_i}$.
\end{proof}
Denote $\lm_i = \lm(\cP{p}_{i-1}, \cP{p}_i)$.
We now define the obstruction section $\obstructionSectionS = (\obstructionSectionS^1, \dots, \obstructionSectionS^m)$ of the bundle $$\arr{\mathcal O} \to \arr{\mathcal M}_R,$$ 
by
\begin{equation}\label{eqn: def of s}
    \obstructionSectionS^i = \Pi_i  \left(\frac{d \beta^-_{i-1}}{d \vt} (u_{i-1}+ \psi_{i-1}) + \frac{d \beta^+_{i+1}}{d\vt} (u_{i+1} + \psi_{i+1})\right),
\end{equation}
where 
\be 
    \item $\arr{\mathcal M}_R \coloneqq \lm_1 \times \dots \times \lm_m \times [R, \infty)^{m-1}$,
    \item $\arr{\mathcal O} = \oplus_{i=1}^m \op{pr}_i^* \mathcal O_i$, and 
    \item $\op{pr}_i: \arr{\mathcal M}_R \to \lm_i$ is the projection map.
\ee

Then we can construct the gluing map $G: \obstructionSectionS^{-1}(0) \to \lm(\cP{p}, \cP{q})$ sending $$([u_1], \dots, [u_m], T_1, \dots , T_{m-1}) \mapsto u.$$

The section $\obstructionSectionS$ is defined implicitly as it involves solving for $(\psi_1, \dots, \psi_m)$. 
Instead, we define a linearized obstruction section $\obstructionSectionS_0 = (\obstructionSectionS_0^1,\dots, \obstructionSectionS_0^m)$, where 
$$\obstructionSectionS_0^i = \Pi_i  \left(\frac{d \beta^-_{i-1}}{d \vt} u_{i-1} + \frac{d \beta^+_{i+1}}{d\vt} u_{i+1}\right).$$
As we will show in Theorem~\ref{thm: gluing}, the zeroes of $\obstructionSectionS_0$ are cobordant to the zeroes of $\obstructionSectionS$. 
If the $\dim \ker D_{u_i}^* = 1$, we can find a nonzero element $0 \neq \eta_i \in \ker D_{u_i}^*$ such that $||\eta_i||_{L^2} = 1$. Then 
\begin{align*}
\obstructionSectionS_0^i & =  \left \langle \frac{d \beta^-_{i-1}}{d \vt} u_{i-1} + \frac{d \beta^+_{i+1}}{d\vt} u_{i+1}, \eta_i  \right \rangle \eta_i \\
& =  \left(\int_{\R} \frac{d \beta^-_{i-1}}{d \vt}\langle  u_{i-1},  \eta_i \rangle_g + \frac{d \beta^+_{i+1}}{d\vt} \langle u_{i+1}, \eta_i \rangle_g d\vt \right) \eta_i 
\end{align*}
where $\langle \cdot, \cdot \rangle_g$ is the inner product with respect to $g$.
Now we study the zeroes of $\obstructionSectionS_0^i$. 
Note that $\frac{d \beta^-_{i-1}}{d \vt}\langle  u_{i-1},  \eta_i \rangle_g$ is supported near the critical point $\cP{p}_{i-1}$, and since $\frac{d \beta^-_{i-1}}{d \vt} \leq 0$, it is of the opposite sign, inside its support, to $\langle u_{i-1}^\infty, \eta_i^{-\infty}\rangle$.
Similarly, since $\frac{d \beta^+_{i+1}}{d\vt} \geq 0$, $\frac{d \beta^+_{i+1}}{d\vt} \langle u_{i+1}, \eta_i \rangle_g$ is of the same sign inside its support as $\langle \eta_i^{\infty}, u_{i+1}^{-\infty} \rangle$.

Therefore, if $\langle u_{i-1}^\infty, \eta_i^{-\infty}\rangle \cdot \langle \eta_i^{\infty}, u_{i+1}^{-\infty} \rangle < 0$ and if $R$ is sufficiently large, there does not exist $(T_1, \dots, T_{m-1})\in [R, \infty)^{m-1}$ such that $$([u_1], \dots, [u_m], T_1, \dots , T_{m-1}) \in \obstructionSectionS_0^{-1}(0).$$

On the other hand, since $\cP{p}_{i-1}$ is a non-degenerate critical point, and $\mathcal N_{\cP{p}_{i-1}}$ is an eigenspace of the Hessian of $f$ at $\cP{p}_{i-1}$, the normal component of $-\op{grad}_{f,g}$ is nonzero in $U \backslash M^G$, where $U$ is a small neighborhood of $\cP{p}_{i-1}$. This implies that the normal component $n_{i-1}$ of $u_{i-1}$ converges to $0$ monotonically as $s \to \infty$. 
Note that $\eta_i$ as a solution of the linear ordinary equation also converges to $0$ monotonically as $\vt \to -\infty$. This implies $\int_{\R} \frac{d \beta^-_{i-1}}{d \vt}\langle  u_{i-1},  \eta_i \rangle_g  d\vt$ converges to $0$ monotonically as $T_{i-1} \to \infty$. The same is true for the term $\int_{\R} \frac{d \beta^+_{i+1}}{d\vt} \langle u_{i+1}, \eta_i \rangle_g d\vt$. This means that if $\langle u_{i-1}^\infty, \eta_i^{-\infty}\rangle \cdot \langle \eta_i^{\infty}, u_{i+1}^{-\infty} \rangle > 0$, then for any sufficiently large $T_{i-1}$, there exists a unique $T_i$ such that $([u_1], \dots, [u_m], T_1, \dots , T_{m-1}) \in \obstructionSectionS_0^{-1}(0)$. We have just proved the following lemma:

\begin{lemma}\label{lemma: zero of linearized section}
    Given a gluable broken trajectory $([u_1], \dots, [u_m])$, an index $i \in \{1, \dots, m-1\}$ and any sufficiently large $T_i$, there \cbu exists an unique value \cb of $$(T_1, \dots, \widehat T_i, \dots, T_{m-1}),$$ such that $([u_1], \dots, [u_m], T_1, \dots , T_{m-1}) \in \obstructionSectionS_0^{-1}(0)$.
\end{lemma}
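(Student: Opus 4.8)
The plan is to show that, after fixing one gluing length $T_i$ to be large, the vanishing locus $\obstructionSectionS_0^{-1}(0)$ is cut out by a chain of scalar equations, one per obstructed piece, which can be solved one variable at a time. First I would dispose of the unobstructed pieces: for each index $j$ with $\ind(\cP{p}_{j-1})-\ind(\cP{p}_j)=1$, condition (A2) of Definition~\ref{def: simple} makes $[u_j]$ isolated and cleanly cut out in a one-dimensional $\widetilde{\lm}(\cP{p}_{j-1},\cP{p}_j)$, so $D_{u_j}$ is surjective, $\ker D_{u_j}^*=0$, $\Pi_j=0$, and $\obstructionSectionS_0^j\equiv 0$ for all gluing lengths. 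For an obstructed index $j$, i.e.\ $\ind(\cP{p}_{j-1})=\ind(\cP{p}_j)$, the same conditions give $\dim\ker D_{u_j}^*=1$; choosing a unit generator $\eta_j$, the equation $\obstructionSectionS_0^j=0$ becomes the scalar equation $A_j(T_{j-1})+B_j(T_j)=0$, where
\[
A_j(T_{j-1})=\int_{\R}\frac{d\beta^-_{j-1}}{d\vt}\,\langle u_{j-1},\eta_j\rangle_g\,d\vt,\qquad
B_j(T_j)=\int_{\R}\frac{d\beta^+_{j+1}}{d\vt}\,\langle u_{j+1},\eta_j\rangle_g\,d\vt,
\]
and, crucially, $A_j$ depends only on $T_{j-1}$ while $B_j$ depends only on $T_j$.

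The analytic core is to pin down $A_j$ and $B_j$ as one-variable functions. Since $\mathcal N_{\cP{p}_{j-1}}$ is an eigenspace of $\hess{f}(\cP{p}_{j-1})$, the normal component of $u_{j-1}$ tends to $0$ monotonically and exponentially as $\vt\to\infty$, and $\eta_j$, a solution of the linear ODE~\eqref{eqn: kernel of D*}, tends to $0$ monotonically and exponentially as $\vt\to-\infty$; hence $A_j$ is continuous, strictly monotone in $T_{j-1}$ with $|A_j|\to 0$ as $T_{j-1}\to\infty$ and $A_j\neq 0$ for $T_{j-1}$ near $R$, and — because $d\beta^-_{j-1}/d\vt\le 0$ — its sign is constant and equal to $-\sign\langle u_{j-1}^\infty,\eta_j^{-\infty}\rangle$. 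Symmetrically $B_j$ is continuous, strictly monotone in $T_j$ with $|B_j|\to 0$, and has sign $\sign\langle\eta_j^\infty,u_{j+1}^{-\infty}\rangle$ since $d\beta^+_{j+1}/d\vt\ge 0$. The gluability hypothesis $\langle u_{j-1}^\infty,\eta_j^{-\infty}\rangle\cdot\langle\eta_j^\infty,u_{j+1}^{-\infty}\rangle>0$ then forces $A_j$ and $B_j$ to have opposite signs, so by the intermediate value theorem and strict monotonicity: for every sufficiently large $T_j$ there is a unique $T_{j-1}>R$ with $A_j(T_{j-1})=-B_j(T_j)$, and $T_{j-1}\to\infty$ as $T_j\to\infty$; symmetrically, each sufficiently large $T_{j-1}$ determines a unique such $T_j>R$.

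Finally I would chain these one-variable solutions together. By Definition~\ref{def: gluable} every obstructed index lies strictly between $1$ and $m$; in the situations where the lemma is applied — in particular the codimension-two broken trajectories of Theorem~\ref{thm: gluing end goal}, where gluability forces $u_1$ and $u_m$ to be the only transverse pieces — all of $u_2,\dots,u_{m-1}$ are obstructed, so the scalar equations form a single chain in which equation $j$ relates $T_{j-1}$ and $T_j$. Given the fixed large $T_i$, I would solve equation $i+1$ for $T_{i+1}$, then equation $i+2$ for $T_{i+2}$, and so on up to equation $m-1$ for $T_{m-1}$, and likewise solve equation $i$ for $T_{i-1}$, then equation $i-1$ for $T_{i-2}$, down to equation $2$ for $T_1$ (either direction possibly empty). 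Each step has a unique solution exceeding $R$, and since a large input yields a large output, taking $T_i$ large enough keeps every intermediate length above $R$; this produces exactly one tuple $(T_1,\dots,\widehat T_i,\dots,T_{m-1})$ with $([u_1],\dots,[u_m],T_1,\dots,T_{m-1})\in\obstructionSectionS_0^{-1}(0)$.

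The step I expect to be the main obstacle is the monotonicity-and-sign analysis of $A_j$ and $B_j$: it is not enough that the integrals tend to $0$; I need genuine strict monotonicity in the gluing length together with a definite sign, which rests on the fact that both the normal component of the neighbouring trajectory near the critical point and the cokernel generator $\eta_j$ decay monotonically — using that $\mathcal N_{\cP{p}_{j-1}}$ is a Hessian eigenspace and that the normal part of $-\op{grad}_{f,g}$ points consistently toward $M^G$ near the critical point. Once this is in place, the intermediate value theorem and the chaining are routine.
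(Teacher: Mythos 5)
Your proposal is correct and follows essentially the same strategy as the paper's proof, which is the paragraph immediately preceding the lemma: reduce the zero-set condition at an obstructed index to a scalar equation, observe that its two summands have constant and (by gluability) opposite signs, and use monotone-to-zero decay in the gluing length to conclude existence and uniqueness. What you add beyond the paper is (a) the explicit remark that $\obstructionSectionS_0^j \equiv 0$ whenever $u_j$ is transversely cut out, so only obstructed indices contribute equations; (b) the clean separation of the scalar constraint into $A_j(T_{j-1}) + B_j(T_j) = 0$ with each term a function of a single gluing length; and (c) the chaining argument across all obstructed indices. The paper's paragraph is phrased as if there were a single obstructed index (``\ldots there exists a unique $T_i$\ldots''), which suffices for its application in Theorem~\ref{thm: gluing end goal} because the index count forces at most one middle piece; you correctly observe that gluability (endpoints are transverse) together with $\ind\cP{p}-\ind\cP{q}=2$ forces every middle piece to be obstructed, so the chain of scalar equations is contiguous, and your step-by-step propagation from $T_i$ outward handles the general $m$ stated in the lemma. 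One point worth flagging in both your argument and the paper's: the ``strict monotonicity in the gluing length'' of $A_j$ and $B_j$ is asserted from the monotone decay of the normal component of $u_{j\mp 1}$ and of $\eta_j$, but strictly speaking this requires also controlling the translating bump function profile; neither account spells this out, so you are at the same level of rigor as the source.
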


\begin{definition}[Close to breaking]\label{defn:close_to_breaking}
    A trajectory $[u] \in \lm(\cP{p}_0, \cP{p}_m)$ is \emph{$\delta$-close to breaking into the broken trajectory} $([u_1], [u_2], \dots, [u_m]) \in \lm_1 \times \lm_2 \times \dots \times \lm_m$ if there exist representatives $u, u_1, u_2, \dots, u_m$ of $[u], [u_1], [u_2], \dots, [u_m]$ respectively, and constants $a_1 < b_1 < \dots < a_{m} < b_{m}$ such that for each $k \in \{1, 2, \dots, m\}$, the following hold:
    \begin{enumerate}[label=(\alph*)]
        \item $u|_{[a_k, b_k]}$ and $u_k|_{[a_k, b_k]}$ are $\delta$-close in the $C^1$-norm.
        \item $u|_{(b_k, a_{k+1})}$ is $\delta$-close in the $C^1$-norm to the constant map $\cP{p}_k$, where $b_0 \coloneqq -\infty$ and $a_m \coloneqq \infty$.
    \end{enumerate}
    Denote the space of all $\delta$-close to breaking trajectories by $\lm^\delta_{\arr{\cP{p}}}(\cP{p_0}, \cP{p_m})$.
\end{definition}   

\begin{theorem}[Gluing]\label{thm: gluing}
    Assume the setting of Theorem~\ref{thm: gluing end goal}.
    Let $([u_1], [u_2], \dots, [u_m])$ be a gluable broken trajectory from $\cP{p}$ to $\cP{q}$.
    There exist a sufficiently large $r$, a gluing parameter bound $R > 0$, and a close-to-breaking parameter bound $\delta > 0$ such that the following holds:
    \begin{enumerate}[label=(\roman*)]
        \item The obstruction section $\obstructionSectionS$ of the bundle 
        $$\arr{\mathcal O} \to \arr{\mathcal M}_R$$ is $C^1$ and intersects the zero section transversely.
        \item \cbu There exists a gluing map $G: \obstructionSectionS^{-1}(0) \to \lm^\delta_{\arr{\cP{p}}}(\cP{p_0}, \cP{p_m})$ such that the following holds: \cb
        \begin{enumerate}[label=(\alph*)]
            \item $G$ is a \cbu homeomorphism \cb onto its image.
            \item For any $R' \geq R$, there exists $\delta' > 0$ such that $$\lm^{\delta'}_{\arr{\cP{p}}}(\cP{p_0}, \cP{p_m}) \subset G(\obstructionSectionS^{-1}(0) \cap \arr{\mathcal M}_{R'}).$$
            \item For any $0 < \delta' < \delta$, there exists $R'$ such that $$\lm^{\delta'}_{\arr{\cP{p}}}(\cP{p_0}, \cP{p_m}) \supset G(\obstructionSectionS^{-1}(0) \cap \arr{\mathcal M}_{R'}).$$
        \end{enumerate}
        \item \cbu For any $i = 1, 2, \dots, m$, and a generic $R' > R$, 
         $\obstructionSectionS^{-1}(0) \cap \{T_i = R'\}$ is cobordant to $\obstructionSectionS_0^{-1}(0) \cap \{T_i = R'\}$. \cb
    \end{enumerate}
\end{theorem}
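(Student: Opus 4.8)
The plan is to run the obstruction-bundle gluing argument of \cite{hutchings2009gluing} in the present ODE setting, taking as input the pre-gluing map \eqref{eqn: u} and the contraction-mapping lemma that solves \eqref{eqn: for psi}. The first step is to promote that lemma to a \emph{parametrized} statement: the implicitly defined solution $(\psi_1,\dots,\psi_m)$ depends in a $C^1$ fashion on $([u_1],\dots,[u_m],T_1,\dots,T_{m-1})\in\arr{\mathcal M}_R$. One differentiates the fixed-point equation \eqref{eqn: for psi} in the $T_j$ and in the finitely many directions tangent to $\lm_1\times\dots\times\lm_m$, obtaining linear equations for the parameter-derivatives of the $\psi_i$ that are again inverted by the uniformly bounded right inverse of $D_{u_i}$ on $(\ker D_{u_i})^{\perp}$. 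The crucial estimate is that $\|\psi_i\|_{L^2_1}$ and all its first parameter-derivatives are $O(e^{-\lambda\min_j T_j})$ for some $\lambda>0$; this holds because the forcing terms in \eqref{eqn: for psi} involve $\tfrac{d\beta^{\pm}}{d\vt}$, which is supported precisely where $u_{i\pm1}$ is exponentially close to the nondegenerate critical point $\cP{p}_i$. It follows that $\obstructionSectionS$ from \eqref{eqn: def of s}, being built from the $\psi_{i\pm1}$ and the cut-offs, is $C^1$.

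For the transversality in (i), recall the monotonicity analysis already carried out for the linearized section: along $\obstructionSectionS_0^{-1}(0)$, at a gluable broken trajectory, $\partial_{T_i}\obstructionSectionS_0^i$ is nonzero of definite sign for each obstructed index $i$ (this is the content of Lemma~\ref{lemma: zero of linearized section}). Since $\obstructionSectionS-\obstructionSectionS_0$ and its first derivatives are again $O(e^{-\lambda\min_j T_j})$, the vertical derivative of $\obstructionSectionS$ is surjective along $\obstructionSectionS^{-1}(0)$ once $R$ is large, so $\obstructionSectionS$ is transverse to the zero section. Counting dimensions — there are $m-1$ gluing parameters while the rank of $\arr{\mathcal O}$ is $m-2$ — shows $\obstructionSectionS^{-1}(0)$ is a $C^1$ one-manifold, completing (i). Defining $G$ by $([u_1],\dots,[u_m],T_1,\dots,T_{m-1})\mapsto u$ with $\psi_i$ the solution from the first step, $u$ lies in $\lm(\cP{p},\cP{q})$ exactly when $\obstructionSectionS$ vanishes, and continuous dependence on parameters together with uniqueness in the contraction-mapping lemma make $G$ injective and a local homeomorphism onto its image; this gives (ii)(a).

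For (ii)(b)–(c) one must see that $G$ picks out exactly the trajectories sufficiently close to breaking. Given $[u]\in\lm^{\delta'}_{\arr{\cP{p}}}$, the close-to-breaking data of Definition~\ref{defn:close_to_breaking} supply representatives $u_i$ and candidate gluing parameters so that $u$ is the pre-glued curve plus a small section $\xi$; inserting this into $\mathcal L u=0$, using the quadratic estimate for the nonlinear remainder and the splitting $\widetilde{\mathcal E}_\gamma=\widetilde{\obstructionBundle}_\gamma\oplus\op{im}D_\gamma$, the component of $\xi$ orthogonal to the $\ker D_{u_i}$ must solve \eqref{eqn: for psi} and its kernel component must satisfy $\obstructionSectionS=0$, so uniqueness places $[u]$ in the image of $G$; the two inclusions with the quantifier orders $(R',\delta')$ and $(\delta',R')$ come out of the usual mutual dependence between the contraction radii and the closeness bounds. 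Finally, for (iii), fix a generic $R'>R$ and consider $\obstructionSectionS_t=(1-t)\obstructionSectionS_0+t\obstructionSectionS$ on the slice $\{T_i=R'\}$ of $\arr{\mathcal M}_R$: both endpoints are transverse to the zero section, and $\obstructionSectionS-\obstructionSectionS_0$ with its derivatives is $O(e^{-\lambda R'})$, so for $R'$ large $\obstructionSectionS_t$ has no zeros outside a fixed compact set and is transverse for generic $(t,R')$, and its zero set is the desired cobordism; combined with Lemma~\ref{lemma: zero of linearized section}, which identifies $\obstructionSectionS_0^{-1}(0)\cap\{T_i=R'\}$ with one point per gluable broken trajectory, this is exactly what Theorem~\ref{thm: gluing end goal} needs, the sign correction $(-1)^m\prod_i\langle u^{-\infty}_{i+1},\mathfrak o_{\cP{p}_i}\rangle$ being read off from the comparison of the orientations of $\mathscr D_{\cP{p}_i}$ and $\mathscr D_{\cP{p}_i}\cap F_i$.

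I expect the main obstacle to be the uniform $C^1$-control of the implicitly defined obstruction section across all (finitely many) broken configurations — producing a single triple $(r,R,\delta)$ that works — and, hand in hand with it, the surjectivity half of (ii): one must estimate the $T$-derivatives of the $\psi_i$ precisely enough both to exclude spurious solutions near breaking and to obtain the two-sided inclusions (ii)(b)–(c). The orientation and sign bookkeeping in (iii), while not analytically hard, must be matched carefully against the conventions fixed just before Theorem~\ref{thm: gluing end goal}.
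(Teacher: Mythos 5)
Your outline follows the direct Hutchings--Taubes-style obstruction-bundle gluing, which is what the paper cites as the primary reference for (i)--(iii). The paper's own sketch, however, takes a genuinely different route for (i) and (ii): it defines the sub-bundle $\widetilde{\mathcal O}\subset\widetilde{\mathcal E}$, forms the thickened space $\mathbb V=\morsedbar^{-1}(\widetilde{\mathcal O})$, and constructs a Kuranishi gluing bundle map $(G',G'_\sharp)$ so that $\obstructionSectionS=(G'_\sharp)^{-1}\circ\morsedbar\circ G'$ is literally the section $\morsedbar$ written in a boundary chart. With that identification the transversality in (i) is inherited \emph{automatically} from the hypothesis that $\morsedbar$ cuts out $\lm(\cP{p},\cP{q})$ transversely, with no exponential-rate comparisons needed; (ii) and (iii) then follow from injectivity and surjectivity of $G'$. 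Your approach must instead establish transversality by hand, and that is where a gap appears.

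Concretely, the step asserting that $\obstructionSectionS-\obstructionSectionS_0$ and its first derivatives are $O(e^{-\lambda\min_j T_j})$ does not by itself give surjectivity of the vertical derivative of $\obstructionSectionS$, because $\obstructionSectionS_0$ and its $T_i$-derivatives are \emph{also} $O(e^{-\lambda\min_j T_j})$: the whole obstruction section is exponentially small, so a bound at the same exponential scale as the main term is vacuous. What one actually needs is the sharper Hutchings--Taubes-type comparison that the correction $\obstructionSectionS-\obstructionSectionS_0$ decays at a strictly faster rate than $\obstructionSectionS_0$ and $\partial_{T_i}\obstructionSectionS_0$ (morally because $\psi_{i\pm1}$ is quadratically smaller than $u_{i\pm1}$ near the breaking points), or a structural mechanism such as the paper's chart identification. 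The same issue infects your linear homotopy argument for (iii): transversality of $\obstructionSectionS_t=(1-t)\obstructionSectionS_0+t\obstructionSectionS$ for generic $(t,R')$ only follows once the finer rate comparison is in place, and in Hutchings--Taubes the cobordism of Corollary~8.6 is not obtained by checking transversality of the naive straight-line homotopy. You do flag $C^1$-control as the main obstacle, which is the right region of the argument, but it is the \emph{relative} exponential scale, not merely uniformity in $(r,R,\delta)$ over the finitely many breaking configurations, that has to be pinned down. The final paragraph about the sign $(-1)^m\prod_i\langle u^{-\infty}_{i+1},\mathfrak o_{\cP{p}_i}\rangle$ belongs to the deduction of Theorem~\ref{thm: gluing end goal} from Theorem~\ref{thm: gluing}, not to Theorem~\ref{thm: gluing} itself.
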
 \cbu 
\begin{proof}[Sketch of proof]
    In the case of $J$-holomorphic curves, this is proven by Hutchings-Taubs in \cite{hutchings2009gluing}:
    Item (ii) follows from Theorem 7.3; Item (iii) is covered in Corollary 8.6; Item (i) is explained in the third paragraph of Section 10.2.

    We outline an alternative approach of (i) and (ii), following the proof of Theorem 3.5 of \cite{bao2024computable}, fitting this into the framework of Kuranishi gluing.
    Over a neighborhood $U$ of $\lm^\delta_{\arr{\cP{p}}}(\cP{p_0}, \cP{p_m})$ inside $\widetilde{\mathcal B}(\cP{p}_0, \cP{p}_m)$, one can define a sub-bundle $\widetilde{\mathcal O} \subset \widetilde{\mathcal E}$, so that for any $u = \sum_i \beta_i u_i$, we have $$\widetilde{\mathcal O}|_u = \Bigl\{\sum_i \beta_i \xi_i ~\Big|~ \xi_i \in \mathcal O_i|_{u_i}\Bigr\}.$$
    
    Since $\morsedbar$ is transverse to the zero section by assumption, it is transverse to $\widetilde{\mathcal O}$.
    This allows us to define the thickened moduli space $\mathbb V = \morsedbar^{-1}(\widetilde{\mathcal O})$. 
    Now we can construct the standard Kuranishi gluing bundle map $(G', G'_\sharp)$:
    \begin{equation}
    \begin{tikzcd}
        \arr{\mathcal O} \arrow[d] \arrow[r, "G'_\sharp"] & \widetilde{\mathcal O} \arrow[d] \\
        \arr{\mathcal M}_{R} \arrow[r, "G'"] & \mathbb V \arrow[u, bend left, "\morsedbar"].\\
    \end{tikzcd}
    \end{equation}
    Here, $G'$ is a homeomorphism onto its image, and $G'_\sharp$ is a bundle isomorphism over the image of $G'$.
    The Kuranishi gluing map $G'$ is defined similarly to the obstruction bundle gluing map $G$, except that instead of trying to solve $\Theta_i = 0$, it solves $\Theta_i \in \mathcal O_i',$ where $\Theta_i$ is defined in Equation~\eqref{eqn: Thetai}; $\mathcal O_i'$ is a bundle over $\mathcal M_i$ that is close to $\mathcal O_i$. The difference (due to $\psi_i$) goes to zero as $R \to \infty$. Hence, $\mathcal O'_i|_{u_i} \oplus \op{im}D_{u_i} = \widetilde{\mathcal E}_{u_i}$, and $\Theta_i \in \mathcal O_i'$ can always be solved.
    Injectivity and surjectivity of $G'$ are proved similarly in the $J$-holomorphic curve case in Section 7.6 and Section 7.7 of \cite{bao2015semi}. 
    
    Now we have the obstruction section $\obstructionSectionS = (G'_\sharp)^{-1} \circ \morsedbar \circ G'$. In other words, the obstruction section is simply the operator $\morsedbar$ written in the boundary chart. Since $\morsedbar$ is transverse to the zero section by assumption, $\obstructionSectionS$ is transverse to the zero section, which proves (i). Items (ii) and (iii) follow from the injectivity and surjectivity of $G'$. 
\end{proof} \cb

\begin{proof}[Proof of Theorem~\ref{thm: gluing end goal}]
    Theorem~\ref{thm: gluing end goal} follows from Theorem~\ref{thm: gluing} and Lemma~\ref{lemma: zero of linearized section}, modulo the sign correction. We now explain the sign correction. We restrict to the case $m=3$ to simplify the notation. The general case is almost the same proof with more complicated notation.
    First, recall that the orientation of the moduli space $\lm(\cP{p}_0, \cP{p}_3)$ is defined as follows: for any $u \in \lm(\cP{p}_0, \cP{p}_3)$ that is close to breaking, we orient $T_u \lm(\cP{p}_0, \cP{p}_3)$ so that the isomorphism
    \begin{equation}\label{eqn: orientation interior}
    T_{\cP{p}_0} \mathscr D_{\cP{p}_0} \simeq \R\langle \frac{du}{d\vt} \rangle \oplus T_u \lm(\cP{p}_0, \cP{p}_3) \oplus T_{\cP{p}_3} \mathscr D_{\cP{p}_3}
    \end{equation}
    is orientation-preserving. Here, $\frac{du}{d\vt}$ is the vector arising from translating the domain in the positive direction; and the isomorphism is given by the flow of the negative gradient vector field, where we use the fact that $\lm(\cP{p}_0, \cP{p}_3)$ is transversely cut out.
    Note that these tangent spaces are implicitly mapped to an unspecified point on $u$ by the flow. To simplify the notation, we omit the point and the identification.
    Let $n \in T_u \lm(\cP{p}_0, \cP{p}_3)$ be the outward-pointing normal vector.
    We define $\boldsymbol{o}_\p (u) \in \{1, -1\}$ such that the orientation of $T_u \lm(\cP{p}_0, \cP{p}_3)$ is given by $\boldsymbol{o}_\p (u) n$. In this way, $\boldsymbol{o}_\p(u)$ is assigned the boundary orientation of $\lm(\cP{p}_0, \cP{p}_3)$.
    We have three equations similar to Equation~\ref{eqn: orientation interior}:
    \begin{equation}\label{eqn: orientation u1}
        T_{\cP{p}_0}\mathscr D_{\cP{p}_0} \simeq \orientation{u_1} \R\langle \frac{du_1}{d\vt} \rangle \oplus T_{\cP{p}_1} \mathscr D_{\cP{p}_1},
    \end{equation}
    \begin{equation}\label{eqn: orientation u2}
        T_{\cP{p}_1}\mathscr D_{\cP{p}_1} \simeq \orientation{u_2} \R\langle \frac{du_2}{d\vt} \rangle \oplus T_{\cP{p}_2} (\mathscr D_{\cP{p}_2} \cap F_2),
    \end{equation}
    \begin{equation}\label{eqn: orientation u3}
        T_{\cP{p}_2}\mathscr D_{\cP{p}_2} \simeq \orientation{u_3} \R\langle \frac{du_3}{d\vt} \rangle \oplus T_{\cP{p}_3} \mathscr D_{\cP{p}_3}.
    \end{equation}
From these we obtain 
    \[
    \R\langle \mathfrak o_{\cP{p}_2} \rangle \oplus T_{\cP{p}_0} \mathscr D_{\cP{p}_0} \simeq \orientation{u_1} \orientation{u_2} \orientation{u_3} \R\langle \frac{du_1}{d\vt}, \frac{du_2}{d\vt}, \frac{du_3}{d\vt} \rangle,
    \]
    where we used $\R \langle \frac{du_2}{d\vt} \rangle \oplus T_{\cP{p}_2} (\mathscr D_{\cP{p}_2} \cap F_2) = T_{\cP{p}_2} \mathscr D_{\cP{p}_2}$.
    Comparing this with Equation~\eqref{eqn: orientation interior}, we obtain the isomorphism:
    \begin{equation}\label{eqn: orientation isomorphism}
    \boldsymbol{o}_\p(u) \R \langle \frac{du}{d\vt}, n \rangle \oplus \mathfrak o_{\cP{p}_2} \simeq \orientation{u_1} \orientation{u_2} \orientation{u_3} \R\langle \frac{du_1}{d\vt}, \frac{du_2}{d\vt}, \frac{du_3}{d\vt} \rangle,
    \end{equation}
    where the isomorphism is orientation-preserving, the vectors in $\langle \ \rangle$ are ordered. The left-hand side of Equation~\eqref{eqn: orientation isomorphism} determines an orientation of $\ker D_u \oplus \ker D_u^*$, and the right-hand side determines an orientation of $\ker D_u \oplus \ker D_u^*$ by gluing, as explained in Section 9 of \cite{hutchings2009gluing}. 
    Via gluing, modulo multiplication by a matrix of positive determinant, it is clear that $\frac{du_1}{d\vt} + \frac{du_2}{d\vt} + \frac{du_3}{d\vt} \mapsto \frac{du}{d\vt}$, and $-\frac{du_1}{d\vt} + \frac{du_3}{d\vt} \mapsto n$. Translating in the direction of $\frac{du_2}{d\vt}$ decreases the contribution of  $\frac{d \beta_1^+}{d \vt}(u_1 + \psi_1)$ and increases the contribution of $\frac{d \beta_3^-}{d \vt}(u_3 + \psi_3)$ in Equation~\eqref{eqn: def of s}, and hence $\frac{du_2}{d\vt} \mapsto \langle u_3^{-\infty}, \mathfrak o_{\cP{p}_2} \rangle \mathfrak o_{\cP{p}_2}$.
    But 
    \[
    (\frac{du_1}{d\vt} + \frac{du_2}{d\vt} + \frac{du_3}{d\vt}) \wedge (-\frac{du_1}{d\vt} + \frac{du_3}{d\vt}) \wedge \frac{du_2}{d\vt} = - \frac{du_1}{d\vt} \wedge \frac{du_2}{d\vt} \wedge \frac{du_3}{d\vt}.
    \]
    Therefore, $\boldsymbol{o}_\p(u) = - \orientation{u_1} \orientation{u_2} \orientation{u_3} \langle u_3^{-\infty}, \mathfrak o_{\cP{p}_2} \rangle$.

\end{proof}
\cbu 
\section{A Further Example}\label{section: beyond reflection}
In this section, we present another example to illustrate obstruction bundle gluing, which lies slightly beyond the reflection group setting. 
This discussion is \emph{not} rigorous and is largely conjectural. A rigorous presentation would require gluing broken trajectories of $f$ to smooth trajectories of $f^\epsilon$. 

We consider the upright, vertically embedded genus-two Riemann surface equipped with a Morse function given by the height function, as shown in Figure~\ref{fig: genus two surface}. 
The moduli spaces $\m{a}{b}$, $\m{b}{c}$, $\m{c}{d}$, and $\m{d}{e}$ are cleanly cut out, but not transversely cut out.

To address this, we modify the Morse function infinitesimally at a few critical points.
It is convenient to introduce a group action $G_1 \times G_2$,
where $G_1 = \Z_2$ acts by front-back reflection, and $G_2 = \Z_2$ acts by left-right reflection. 
Let $g$ denote the standard Euclidean metric restricted to the surface.
The pair $(f,g)$ is not stably Morse–Smale with respect to the $G_1$-action nor the $G_2$-action.
Although the pair $(f,g)$ is stably Morse–Smale with respect to the $G_1 \times G_2$-action,
proceeding in this direction would require significantly more work. 
Instead, we add a few generators to the chain complex. We call the collection of new generators and original critical points generalized critical points. Specifically, 
\begin{itemize}
    \item add $\cP{c}_+$ and $\cP{c}_-$, and set the gradings $|\cP{c}_\pm| = 1$ and $|\cP{c}| = 0$;
    \item add $\cP{d}_l$ and $\cP{d}_r$, and set the gradings $|\cP{d}_l| = |\cP{d}_r| = 1$ and $|\cP{d}| = 0$;
    \item add $\cP{e}_+$ and $\cP{e}_-$, and set the gradings $|\cP{e}_\pm| = 1$ and $|\cP{e}| = 0$.
\end{itemize}
Here, the subscripts $\pm$ indicate generators added with respect to the front-back $G_1$-action, and $l, r$ indicate those added with respect to the front-back $G_2$-action.
These generalized critical points can be interpreted as critical points of a perturbed Morse function $f^\epsilon$.
The chain complex $C$ is freely generated over $\Z$ by
\[
\{\cP{a}, \cP{b}, \cP{c}_+, \cP{c}_-, \cP{c}, \cP{d}_l, \cP{d}_r, \cP{d}, \cP{e}_+, \cP{e}_-, \cP{e}, \cP{f}\}.
\]

We now consider moduli spaces of generalized gluable broken (ggb) trajectories $\widehat{\mathcal M}(\cP{p}, \cP{p'})$ between any two generalized critical points.
A ggb trajectory is a broken trajectory that can be glued to a genuine trajectory of $f^\epsilon$. 
We define the differential $\p$ by counting index-$1$ ggb trajectories. 

To show $\p^2 = 0$, we examine the boundaries of moduli spaces of index-$2$ ggb trajectories. 
In this example, the relevant moduli spaces are 
\[
\widehat{\mathcal M}(\cP{a}, \cP{c}),\quad \widehat{\mathcal M}(\cP{a}, \cP{d}),\quad \widehat{\mathcal M}(\cP{a}, \cP{e}),\quad \text{and} \quad \widehat{\mathcal M}(\cP{a}, \cP{f}).
\]
The first three are empty. For instance, a candidate element in $\widehat{\mathcal M}(\cP{a}, \cP{c})$ such as the generalized broken trajectory $(u,v)$ is not gluable. On the other hand, $\widehat{\mathcal M}(\cP{a}, \cP{f})$ is non-empty, and its boundary is given by
\[
\coprod_{\cP{p}} \widehat{\mathcal M}(\cP{a}, \cP{p})\times  \widehat{\mathcal M}(\cP{p}, \cP{f}),
\]
where the disjoint union ranges over \emph{all} generalized critical points $\cP{p}$ such that $|\cP{a}| - |\cP{p}| = 1$. This accounts for why $\p^2 = 0$. 
In this case, $\widehat{\mathcal M}(\cP{p}, \cP{f}) = \emptyset$ for $\cP{p} \in \{\cP{b}, \cP{c}_+, \cP{c}_-, \cP{d}_l, \cP{d}_r\}$. 
Hence, the boundary of $\widehat{\mathcal M}(\cP{a}, \cP{f})$ is 
\[
\left(\widehat{\mathcal M}(\cP{a}, \cP{e}_+) \times \widehat{\mathcal M}(\cP{e}_+, \cP{f}) \right) \coprod \left(\widehat{\mathcal M}(\cP{a}, \cP{e}_-) \times \widehat{\mathcal M}(\cP{e}_-, \cP{f}) \right),
\]
which is 
\[
\{(u,v,w,x,y), (u,v',w,x',y), (u',v,w',x,y'), (u',v',w',x',y')\}.
\]

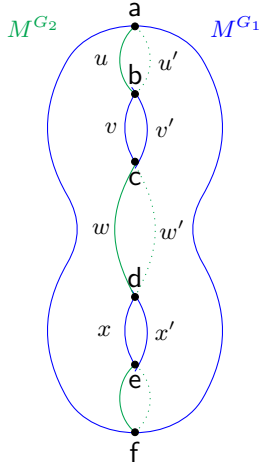
\begin{figure}[h]
    \centering 
    \begin{tikzpicture}[rotate = 90, scale = 0.9]
    \draw[blue, smooth] (0,1) to[out=30,in=150] (2,1) to[out=-30,in=210] (3,1) to[out=30,in=150] (5,1) to[out=-30,in=30] (5,-1) to[out=210,in=-30] (3,-1) to[out=150,in=30] (2,-1) to[out=210,in=-30] (0,-1) to[out=150,in=-150] (0,1);
    \draw[blue, smooth] (0.4,0.0) .. controls (0.8,-0.25) and (1.2,-0.25) .. (1.6,0.05);
    \draw[blue, smooth] (0.5,0) .. controls (0.8,0.2) and (1.2,0.2) .. (1.5,0);
    \draw[blue, smooth] (3.4,0.0) .. controls (3.8,-0.25) and (4.2,-0.25) .. (4.6,0.05);
    \draw[blue, smooth] (3.5,0) .. controls (3.8,0.2) and (4.2,0.2) .. (4.5,0);
    \node[blue] at (5.5, -1.5) {\footnotesize{$M^{G_1}$}};

    \draw[Green] (5.5, 0.0) .. controls (5.25,0.3) and (4.75, 0.3).. (4.5, 0);
    \node[] at (5, 0.5) {\footnotesize $u$};

    \draw[dotted, Green] (5.5, 0.0) .. controls (5.25,-0.3) and (4.75, -0.3).. (4.5, 0);
    \node[] at (5, -0.5) {\footnotesize $u'$};

    \draw[Green] (3.45, 0) .. controls (2.75,0.4) and (2.25, 0.4).. (1.5, 0);
    \draw[dotted, Green] (3.45, 0) .. controls (2.75,-0.4) and (2.25, -0.4).. (1.5, 0);

    \draw[Green] (.5, 0)  .. controls (.25,0.3) and (-0.25, 0.3).. (-0.5, 0) ;
    \draw[dotted, Green] (.5, 0)  .. controls (.25,-0.3) and (-0.25, -0.3).. (-0.5, 0) ;

    \node[Green] at (5.5, 1.5) {\footnotesize{$M^{G_2}$}};

    \draw[fill] (5.5, 0) circle (0.05);
    \node[anchor = south] at (5.5, 0) {$\cP{a}$};
    \draw[fill] (4.5, 0) circle (0.05);
    \node[anchor = south] at (4.5, 0) {$\cP{b}$};
    \draw[fill] (3.5, 0) circle (0.05);
    \node[anchor = north] at (3.5, 0)  {$\cP{c}$};
    \draw[fill] (1.5, 0) circle (0.05);
    \node[anchor = south] at (1.5, 0) {$\cP{d}$};
    \draw[fill] (.5, 0) circle (0.05);
    \node[anchor = north] at (.5, 0)  {$\cP{e}$};
    \draw[fill] (-0.5, 0) circle (0.05);
    \node[anchor = north] at (-0.5, 0)  {$\cP{f}$};

    \node[] at (4, 0.35) {\footnotesize $v$};
    \node[] at (4.05, -0.45) {\footnotesize $v'$};

    \node[] at (2.5, 0.5) {\footnotesize $w$};
    \node[] at (2.5, -0.55) {\footnotesize $w'$};

    \node[] at (1, 0.45) {\footnotesize $x$};
    \node[] at (1.05, -0.45) {\footnotesize $x'$};

    \node[] at (0, 0.5) {\footnotesize $y$};
    \node[] at (0, -0.5) {\footnotesize $y'$};
    \end{tikzpicture}
    \caption{The upright genus-two surface}
    \label{fig: genus two surface}
\end{figure}

We now explain how the notion of ggb trajectories applies in this example and show that $\p \cP{a} = 0$.
To compute $\langle \p \cP{a}, \cP{e}_+ \rangle$, the coefficient of $\cP{e}_+$ in $\p \cP{a}$, we count the ggb trajectories $(u, v, w, x)$ and $(u, v', w, x')$, which cancel each other. 
The trajectory $(u, v, w, x)$ is not a ggb trajectory from $\cP{a}$ to $\cP{e}$ because $x$ is obstructed. 
However, since
\[
\langle w^{\infty}, \xi^{-\infty} \rangle \cdot \langle \xi^\infty, \mathfrak o_{\cP{e}} \rangle > 0,
\]
$(u, v, w, x)$ is a ggb trajectory from $\cP{a}$ to $\cP{e}_+$. Similarly, 
\begin{itemize}
    \item[-] $\langle \p \cP{a}, \cP{e}_- \rangle$ counts $(u', v, w', x)$ and $(u', v', w', x')$; 
    \item[-] $\langle \p \cP{a}, \cP{d}_l \rangle$ counts $(u, v, w)$ and $(u', v, w')$; 
    \item[-] $\langle \p \cP{a}, \cP{d}_r \rangle$ counts $(u, v', w)$ and $(u', v', w')$; 
    \item[-] $\langle \p \cP{a}, \cP{c}_+ \rangle$ counts $(u, v)$ and $(u, v')$; 
    \item[-] $\langle \p \cP{a}, \cP{c}_- \rangle$ counts $(u', v)$ and $(u', v')$; 
    \item[-] $\langle \p \cP{a}, \cP{b} \rangle$ counts $u$ and $u'$.
\end{itemize}
For other terms, we have:
\begin{itemize}
    \item[-] $\p \cP{b} = 0$, since $\langle \p \cP{b}, \cP{c} \rangle = 0$ counts $v$ and $v'$, and there are no ggb trajectories to other generators;
    \item[-] $\p \cP{c}_+ = - \cP{c} + \cP{d}$, by counting $w$ and no ggb trajectories to other generalized critical points; similarly, $\p \cP{c}_- = \cP{c} - \cP{d}$;
    \item[-] $\p \cP{d}_l = - \cP{d} + \cP{e}$, from $x$ and the trivial trajectory from $\cP{d}_l$ to $\cP{d}$; similarly, $\p \cP{d}_r = \cP{d} - \cP{e}$;
    \item[-] $\p \cP{e}_+ = - \cP{e} + \cP{f}$, and $\p \cP{e}_- = \cP{e} - \cP{f}$.
\end{itemize}

The resulting homology of this genus-two Riemann  surface $\Sigma_2$ is \[H_0(\Sigma_2; \Z)=\Z, \quad \quad H_1(\Sigma_2; \Z)=\Z^4, \quad \quad H_2(\Sigma_2; \Z)=\Z.\]

\bibliographystyle{alpha}
\bibliography{mybib}

\end{document}